\newsavebox\myboxA
\newsavebox\myboxB
\newlength\mylenA
\newcommand*\xoverline[2][0.5]{%
    \sbox{\myboxA}{$\m@th#2$}%
    \setbox\myboxB\null
    \ht\myboxB=\ht\myboxA%
    \dp\myboxB=\dp\myboxA%
    \wd\myboxB=#1\wd\myboxA
    \sbox\myboxB{$\m@th\overline{\copy\myboxB}$}
    \setlength\mylenA{\the\wd\myboxA}
    \addtolength\mylenA{-\the\wd\myboxB}%
    \ifdim\wd\myboxB<\wd\myboxA%
       \rlap{\hskip 0.5\mylenA\usebox\myboxB}{\usebox\myboxA}%
    \else
        \hskip -0.5\mylenA\rlap{\usebox\myboxA}{\hskip 0.5\mylenA\usebox\myboxB}%
    \fi}
\def\bign#1{\mathclose{\hbox{$\left#1\vbox to8.5\p@{}\right.\n@space$}}\mathopen{}}
\def\Bign#1{\mathclose{\hbox{$\left#1\vbox to11.5\p@{}\right.\n@space$}}\mathopen{}}
\def\biggn#1{\mathclose{\hbox{$\left#1\vbox to14.5\p@{}\right.\n@space$}}\mathopen{}}
\def\Biggn#1{\mathclose{\hbox{$\left#1\vbox to17.5\p@{}\right.\n@space$}}\mathopen{}}
\newtheorem{theorem}[subsection]{Theorem}
\newtheorem{theorem*}{Theorem}
\newtheorem{corollary}[subsection]{Corollary}
\newtheorem{lemma}[subsection]{Lemma}
\newtheorem{proposition}[subsection]{Proposition}
\theoremstyle{definition}
\newtheorem{definition}[subsection]{Definition}
\newtheorem{example}[subsection]{Example}
\newtheorem{remark}[subsection]{Remark}
\newtheorem*{remark*}{Remark}
\numberwithin{figure}{section}
\numberwithin{table}{section}
\numberwithin{equation}{section}
\newcommand{\Bo}[1]{{\mathbf #1}}
\newcommand{\MCG}{\operatorname{MCG}}
\newcommand{\Ab}{\operatorname{Ab}}
\newcommand{\Aut}{\operatorname{Aut}}
\newcommand{\Wh}{\operatorname{Wh}}
\newcommand{\im}{\operatorname{im}}
\newcommand{\Push}{\operatorname{Push}}
\newcommand{\Sym}{\operatorname{Sym}}
\newcommand{\OP}{\operatorname}
\begin{document}

\title[Aut-norms and Aut-quasimorphisms on $\Bo F_n$ and $\Bo\Gamma_g$]{Aut-invariant norms and Aut-invariant quasimorphisms on free and surface groups}

\author{Michael Brandenbursky}
\author{Micha\l\ Marcinkowski}
\address{Ben Gurion University of the Negev, Israel}
\email{brandens@math.bgu.ac.il}
\address{Ben Gurion University \& Regensburg Universit{\"a}t \& Uniwersytet Wroc\l awski}
\email{marcinkow@math.uni.wroc.pl}
\keywords{free groups, mapping class groups, quasi-morphisms, invariant norms}
\subjclass[2010]{57}

\begin{abstract}
Let $\Bo F_n$ be the free group on $n$ generators and $\Bo\Gamma_g$ the surface group of genus $g$. 
We consider two particular generating sets: the set of all primitive elements in $\Bo F_n$ and 
the set of all simple loops in $\Bo\Gamma_g$.  
We give a complete characterization of distorted and undistorted elements in the corresponding
$\Aut$-invariant word metrics. In particular, we reprove Stallings theorem and 
answer a question of Danny Calegari about the growth of simple loops.
In addition, we construct infinitely many quasimorphisms on $\Bo F_2$ that are $\Aut(\Bo F_2)$-invariant. 
This answers an open problem posed by Mikl\'os Ab\'ert. 
\end{abstract}

\maketitle

\section{Introduction.}
Let $G$ be a group and let $\Aut(G)$ be the group of all automorphisms of $G$. 
A function  $|\mathord{\cdot}|\colon G\to [0,\infty)$ is called a norm
if it satisfies the following conditions for all $g,h\in G$:
\begin{itemize}
\item
$|g|=0$ if and only if $g=1_G$.
\item
$|g|=|g^{-1}|$
\item
$|gh|\leq |g|+|h|$
\end{itemize}

In this paper we study norms that are $\Aut(G)$-invariant, i.e., for each $x\in G$ and $\psi\in\Aut(G)$ we have $|x| = |\psi(x)|$. 
An example of such a norm is the word norm $|\mathord{\cdot}|_S$ defined by $\Aut(G)$-invariant generating set $S$, that is:
$$
|x|_S =\min\{n \colon x = s_1\ldots,s_n,~\text{where}~s_i \in S~\text{for each}~i\}.
$$
Under mild assumptions, up to bi-Lipschitz equivalence, there is only one $\Aut(G)$-invariant word norm. 
We denote it by $|\mathord{\cdot}|_{\Aut}$ and call it the $\Aut$-norm of $G$. 
We focus on two types of groups: 
surface groups, where the $\Aut$-norm counts the minimal number of simple loops needed to express an element in a group, 
and free groups, where the $\Aut$-norm counts the minimal number of primitive elements needed to express an element in a group.

Let $x\in G$. Recall that $x$ is \textbf{undistorted} with respect to $|\mathord{\cdot}|$
if there exists $C>0$ such that $|x^n|>Cn$. Otherwise it is \textbf{distorted}. 
Note that this notion depends only on the bi-Lipschitz class of $|\mathord{\cdot}|$.
In this paper we characterize distorted and undistorted elements in surface and free groups.
The main idea is to find an appropriate quasimorphism on $G$. More precisely,   
in order to show that $x\in G$ is undistorted in $|\mathord{\cdot}|_{\Aut}$, it is enough to find a homogeneous 
quasimorphism which is non-zero on $x$ but is bounded on some $\Aut(G)$-invariant generating set $S$.
This strategy was previously used in the context of conjugation invariant norms, see e.g. \cite{bgkm, BK-disc}.

We say that $G$ satisfies \textbf{bq-dichotomy} with respect to the $\Aut$-norm, 
if for every element $x \in G$, either the cyclic group $\langle x \rangle$ is bounded in the $\Aut$-norm, 
or we can find a homogeneous quasimorphism which does not vanish on $x$ but is bounded
on some $\Aut(G)$-invariant generating set. 
That is, all undistorted elements are detected by appropriate quasimorphisms. 

The main result of this paper is presented below, i.e.,  
we prove the following theorem (see Theorems \ref{thm:free.main} and \ref{thm:surface.main} in the text),
which, in particular, answers (see Corollary~\ref{cor:calegari-question}) 
a question of Danny Calegari from 2007 (see \cite[Question 1.6]{C-08}) and 
gives a simple proof of Stallings theorem \cite[Theorem 2.4]{stallings} on Whitehead graphs of separable elements.

\begin{theorem*}\label{thm:T1}
Surface groups and free groups satisfy bq-dichotomy with respect to their $\Aut$-norms. 
Moreover, in both cases, there is an explicit characterization of undistorted elements. 
\end{theorem*}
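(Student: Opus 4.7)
The plan is to handle $\Bo F_n$ and $\Bo \Gamma_g$ in parallel, in each case reducing the dichotomy to the construction of Brooks-type homogeneous quasimorphisms that are uniformly bounded on the $\Aut$-invariant generating set. The overall shape of the argument is: first, give an explicit group-theoretic characterization of the elements whose cyclic subgroup is bounded in the $\Aut$-norm (the ``obviously distorted'' side); second, for every element outside this class, exhibit a homogeneous quasimorphism nontrivial on the element but bounded on every primitive, respectively every simple loop. The latter yields the $\Bo b\Bo q$-dichotomy, and combined with the former it gives the explicit characterization of undistorted elements.

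For $\Bo F_n$, the natural characterization is in terms of proper free factors: a cyclically reduced $x \in \Bo F_n$ has bounded cyclic subgroup in the $\Aut$-norm exactly when some positive power of $x$ lies in a proper free factor, equivalently when $x$ has a separable Whitehead graph in the sense of Stallings. To certify undistortion for all other $x$, I would use Brooks counting quasimorphisms $h_w$ attached to suitable cyclically reduced words $w$ appearing in the axis of $x$. The point is to select $w$ so that $h_w(x) \neq 0$ while $h_w$ is uniformly bounded on primitive elements; the existence of such a $w$ is equivalent to $x$ not being virtually contained in any proper free factor. Stallings' theorem then falls out of this equivalence as an immediate consequence of the Whitehead-graph side.

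For $\Bo \Gamma_g$, the analogous characterization replaces ``proper free factor'' by ``carried by a proper $\pi_1$-injective subsurface''. Homogeneous quasimorphisms on $\Bo \Gamma_g$ bounded on simple loops can again be assembled as Brooks-type counting functions, but one must exhibit a subword of $x$ (in a Dehn-style generating system) that cannot appear essentially in any simple closed curve. This is combinatorially constrained because a geodesic representative of a simple loop cannot self-intersect, forcing its symbolic word to avoid certain patterns. Finding a subword of $x$ outside the collection of ``simple subwords'' and then homogenizing produces the desired certificate; specializing to the cyclic subgroups in question answers Calegari's linear-growth question for simple loops.

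The main obstacle is the surface-group case. Verifying that the chosen quasimorphism is uniformly bounded on \emph{every} simple loop requires a precise combinatorial description of which subwords can occur in simple loops, together with a small-cancellation or ping-pong argument producing enough ``forbidden'' subwords. Once such a boundedness lemma is available in both settings, the standard quasimorphism estimate $n|\phi(x)| = |\phi(x^n)| \leq D(\phi)\,|x^n|_{\Aut} + \sup_{s \in S}|\phi(s)| \cdot |x^n|_{\Aut}$ forces $|x^n|_{\Aut} \geq cn$, completing the dichotomy and pinning down the undistorted elements in both $\Bo F_n$ and $\Bo \Gamma_g$.
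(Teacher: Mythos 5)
Your free-group half is essentially the paper's argument: the bounded elements are the separable ones (the paper's trick is that $x^n p$ is primitive when $x$ lies in a free factor $F_1$ and $p$ is a basis element of the complementary factor), and for non-separable $x$ one Whitehead-reduces to a representative $y$ whose Whitehead graph is connected with no cut-vertex and uses the Brooks quasimorphism counting $y^{2}$, which Whitehead's cut-vertex theorem forces to be bounded on all primitive elements. You gesture at this but leave the crucial boundedness-on-primitives step ("select $w$ so that $h_w$ is uniformly bounded on primitive elements") unproved; that step \emph{is} the content of Lemma~\ref{L:p^2} and needs Theorem~\ref{whitehead}, so you should make it explicit rather than assert the equivalence.

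The surface-group half has two genuine gaps. First, your proposed characterization is wrong: the correct dividing line is that $\langle x\rangle$ is bounded if and only if $x$ is a power of a \emph{simple non-separating} loop. Being "carried by a proper $\pi_1$-injective subsurface" is far too weak a criterion --- a simple separating curve, and indeed any non-filling curve, is carried by a proper subsurface, yet the paper shows all of these except powers of simple non-separating loops are undistorted. Second, and fatally for your method, the hard case (the one left open by Calegari) is precisely a simple separating element $x$; there your forbidden-subword strategy cannot work, because every subword of $x$ occurs in a simple loop, namely in $x$ itself, so no counting quasimorphism built from subwords of $x$ can be simultaneously nonzero on $x$ and bounded on all simple elements. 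The paper avoids this entirely by transferring the problem to the mapping class group via the Birman embedding $Push\colon \Bo\Gamma_g \to \MCG(S,\star)$: a generalized Kra theorem identifies the Nielsen--Thurston decomposition of $Push(x)$ (a single pseudo-Anosov piece when $x$ is not a power of a simple element, and $T_{\gamma^+}^{n}T_{\gamma^-}^{-n}$ with $\gamma$ separating in the simple separating case), and the Bestvina--Bromberg--Fujiwara quasimorphisms on a finite-index subgroup $H_{\circ} < \MCG(S,\star)$ detect both situations; pulling back along $Push$ and invoking Lemma~\ref{lem:aut.imp.undist} finishes the proof. Without some substitute for this mapping-class-group input, your outline does not close the surface case.
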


This theorem has an application to geodesics on closed hyperbolic surfaces.
More precisely, we show that for every neighborhood $U$ of a point $p$ 
that lies on a closed simple non-separating geodesic $l$, 
there is another simple closed geodesic $l'$ which passes through $U$,
see Theorem \ref{fac:many.fellows.prop.}.

In addition, the methods we use allow us to prove the following theorem (see Corollary~\ref{cor:F_2.inv} in the text) which
answers a question of Mikl\'os Ab\'ert from 2010 (see \cite[Question 47]{abert}) in the case of $\Bo F_2$. 

\begin{theorem*}
The space of homogeneous $\Aut(\Bo F_2)$-invariant quasimorphisms on $\Bo F_2$ is infinite dimensional.
\end{theorem*}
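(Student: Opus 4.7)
The strategy is to apply the $\mathbf{bq}$-dichotomy for $\Bo F_2$ from the main theorem to an infinite supply of pairwise inequivalent undistorted elements, and then promote the resulting homogeneous quasimorphisms to $\Aut(\Bo F_2)$-invariant ones. A useful preliminary observation is that every homogeneous quasimorphism is automatically conjugation-invariant, so $\Aut(\Bo F_2)$-invariance amounts to invariance under the induced action of $\Aut(\Bo F_2)/\OP{Inn}(\Bo F_2) \cong \OP{GL}_2(\B Z)$ on conjugacy classes. Moreover, since the set of primitives is $\Aut(\Bo F_2)$-invariant, the space of homogeneous quasimorphisms bounded on primitives is itself an $\OP{GL}_2(\B Z)$-subrepresentation of the full space of homogeneous quasimorphisms.

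I would then fix an infinite sequence $x_1, x_2, \ldots \in \Bo F_2$ of undistorted elements whose conjugacy classes lie in pairwise distinct $\Aut(\Bo F_2)$-orbits -- natural candidates being commutator powers $[a,b]^n$ (whose $\Aut$-orbit is just $\{[a,b]^{\pm n}\}$ up to conjugation), or words distinguished by their Whitehead graphs or by stable commutator length -- and apply the main theorem to each to obtain a homogeneous quasimorphism $\phi_n$ bounded on primitives with $\phi_n(x_n) \neq 0$. The family $\{\phi_n\}$ is automatically linearly independent in the space of homogeneous quasimorphisms bounded on primitives, since pairing with the $x_n$ gives an upper-triangular matrix (after passing to a subfamily if needed).

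The main step, and the expected obstacle, is upgrading the $\phi_n$ to $\Aut(\Bo F_2)$-invariant quasimorphisms. Since $\OP{GL}_2(\B Z)$ is non-amenable, naive averaging over orbits is unavailable, so one has to inspect the specific construction of the $\phi_n$ provided by the proof of the main theorem and choose its inputs in an $\Aut$-equivariant fashion: for instance, using counting quasimorphisms supported on $\Aut$-invariant (finite unions of) cyclic words, or pulling back $\MCG$-invariant geometric quasimorphisms via the identification $\Bo F_2 \cong \pi_1(S_{1,1})$ with the once-punctured torus, where $\MCG(S_{1,1})$ surjects onto $\OP{Out}(\Bo F_2)$. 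With such $\Aut$-equivariant inputs the resulting quasimorphism is manifestly $\Aut(\Bo F_2)$-invariant. Combined with the linear independence established above, this produces the required infinite-dimensional subspace of $\Aut(\Bo F_2)$-invariant homogeneous quasimorphisms.
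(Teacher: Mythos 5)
Your outline points in the right direction---the paper does ultimately pull back quasimorphisms from the mapping class group of the punctured torus---but the proposal fails at its two concrete suggestions and leaves the genuinely hard step as a gesture. First, the candidate family $[a,b]^n$ cannot work: the automorphism swapping $a$ and $b$ sends $[a,b]$ to $[b,a]=[a,b]^{-1}$, so every $\Aut(\Bo F_2)$-invariant homogeneous quasimorphism $q$ satisfies $q([a,b]^n)=nq([a,b])=-nq([a,b])=0$. Any element that is $\Aut$-equivalent to its inverse is invisible to the quasimorphisms you are trying to construct (you even quote the fact that the orbit of $[a,b]^n$ is $\{[a,b]^{\pm n}\}$ up to conjugacy, which is precisely the problem). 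The test family must therefore consist of elements in pairwise distinct orbits that are moreover \emph{not} inverted by any automorphism; producing and certifying such elements is nontrivial and occupies Section 5 of the paper (the words $x_k=a^{z_k}b^{2z_k}a^{3z_k}b^{4z_k}$, analysed via the Whitehead algorithm and abelianization). Second, the suggestion of counting quasimorphisms supported on $\Aut$-invariant finite unions of cyclic words is a dead end: $\Aut(\Bo F_2)$-orbits of cyclic words are infinite, and indeed no finite linear combination of counting quasimorphisms is $\Aut(\Bo F_2)$-invariant (Hasse's theorem, cited in Remark~\ref{R:inf-not-Brooks}).

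The surviving route, which is the paper's, requires several steps you have not supplied. One realizes conjugation by $x$ as the point-pushing map $Push(x)\in\MCG(S_1\setminus\{\circ\},\star)$ via the Birman exact sequence, shows by the generalized Kra theorem that $Push(x_k)$ is pseudo-Anosov for non-simple $x_k$, and invokes Bestvina--Bromberg--Fujiwara: their quasimorphisms realize arbitrary prescribed values on finitely many pure elements \emph{provided} these represent distinct chiral classes, i.e.\ no $Push(x_k)^i$ is conjugate to $Push(x_{k'})^{\pm j}$---exactly the non-inversion and distinct-orbit conditions above, which must be verified in $\Bo F_2$ and transported through the injectivity of $Push$. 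The resulting pullbacks are invariant only under the image of $\MCG(S_1\setminus\{\circ\},\star)$ in $\Aut(\Bo F_2)$, which is the index-two subgroup $\Aut^+(\Bo F_2)$, not all of $\Aut(\Bo F_2)$. A final symmetrization $q\mapsto q+q\circ\sigma$ over the quotient $\Aut(\Bo F_2)/\Aut^+(\Bo F_2)$ is needed, together with a further Whitehead-algorithm argument that $x_k$ and $\sigma(x_k)$ lie in distinct $\Aut^+(\Bo F_2)$-orbits, so that the symmetrized quasimorphisms still span a space of dimension $k$. None of these steps appears in your proposal, so as written it does not establish the theorem.
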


As a corollary we provide an infinite dimensional space of quasimorphisms on $\Bo F_2$ where each
quasimorphism can not be expressed as a finite sum of counting quasimorphisms, see Remark \ref{R:inf-not-Brooks}.

\subsection*{Acknowledgments.}
We would like to thank Mladen Bestvina for answering our questions, 
and Danny Calegari and Hugo Parlier for comments on the early draft of our paper.  

Both authors were partially supported by GIF-Young Grant number I-2419-304.6/2016
and by SFB 1085 ``Higher Invariants'' funded by DFG.

Part of this work has been done during the authors stay at the University of Regensburg and the second author stay
at the Ben-Gurion University. We wish to express our gratitude to both places for the support and excellent working conditions.

\section{Preliminaries}
\subsection{$\Aut$-invariant norm on free groups.}\label{sub:primitive.norm}
Let $\Bo F_n$ be the free group of rank $n$.
An element $b$ is called \textbf{primitive}, if it is an element of some free basis of $\Bo F_n$. 
Note that if $b$ is primitive and $\psi\in \Aut(\Bo F_n)$, then $\psi(b)$ is primitive and each base element has a form 
$\psi(b)$ for some fixed $b$. 
In other words, the set of all primitive elements in $\Bo F_n$ is a single orbit of the $\Aut(\Bo F_n)$ action. 

Given an element of $\Bo F_n$ where $n\geq 3$, it is difficult to decide if it is primitive or not. 
In his celebrated papers (\cite{wh1,wh2}), J.~H.~C.~Whitehead provided an algorithmic method to solve this problem. However, 
the time complexity of this algorithm seems to be ineffective for large $n$ (see \cite{gen.algo} for an attempt to find fast algorithms). 
It is worth to note that in the case of $\Bo F_2$ the situation is completely different. 
There is a quadratic in time algorithm which checks whether an 
element of $\Bo F_2$ is primitive or not. It can be extracted from \cite{prim.in.f2}. 

We consider the following norm: 
$$
|x|_p=\min\{n~|~x=b_1 \ldots b_n, \text{where}~b_i~\text{is a primitive element in}~\Bo F_n\}, 
$$
which we call the \textbf{primitive norm} of $x$.
A priori it is not clear whether this norm is unbounded. 
Indeed, for the free group of infinite rank, the norm of any element is at most $2$.
In case of arbitrary finite rank, the unboundedness was proven in \cite{prim.width} by exhibiting a relevant non-trivial
homogeneous quasimorphism. Our Theorem~\ref{thm:free.main} may be viewed as a generalization of this result.
In the proof we use Whitehead graphs and Whitehead automorphisms, and 
as a corollary we obtain a result due to Stallings 
(See Corollary~\ref{C:separable}) on Whitehead graphs of separable elements. 

\subsection{$\Aut$-invariant norm on surface groups.}\label{sub:simple.loops.norm}

Let us denote by $S_g$ the oriented closed surface of genus $g$ and let $\star \in S_g$ be a base point.
Let $\Bo\Gamma_g = \pi_1(S_g,\star)$.
An element $s$ of $\Bo\Gamma_g$ is called \textbf{simple}, 
if it can be represented by a based loop with no self-intersection points (such a loop is called simple). 
Simple elements generate $\Bo\Gamma_g$, thus we can consider the following norm: 
$$
|x|_s=\min\{n~|~x=s_1 \ldots s_n, \text{where}~s_i~\text{is a simple element in}~\Bo\Gamma_g\}.
$$
We call $|\mathord{\cdot}|_s$ the \textbf{simple loops norm}. 
Danny Calegari proved that every non-simple element $x \in\Bo\Gamma_g$ is undistorted in this norm \cite{C-08}, 
leaving the case of simple elements open. Theorem~\ref{thm:surface.main} solves this case.
In particular, we show that an element which is represented by simple separating closed curve is undistorted.

Recently, Erlandsson considered generating sets consisting of simple loops and obtained interesting results about 
intersection numbers \cite{Erl-1, Erl-2}. However, generating sets she considers are finite and we consider infinite 
generating sets.

Let us put an arbitrary hyperbolic metric on $S_g$. 
It is known that in every homotopy class of a based loop there is a unique based closed geodesic. 
Using Theorem~\ref{thm:surface.main} we draw some conclusions concerning the behavior of closed geodesic in $S_g$,
see Subsection \ref{ss:applications}.  

\subsection{Generalization: $H$-norms.}
\label{sub:general}
Let $G$ be a group and $H<\Aut(G)$ be a subgroup of the group of automorphisms of $G$. 
The group $H$ acts on $G$ from the left.  
We say that 
\begin{itemize}
\item $G$ is $H$-{\bf generated} by $S\subset G$ if  $\bar{S} = \{h(s)\colon s\in S\cup S^{-1}, h\in H\}$ generates $G$.
\item $H$-invariant subset $\bar{S}$ of $G$ is $H$-{\bf finite} if $\bar{S}/H$ is finite,
i.e., $\bar{S}$ is a sum of finitely many $H$-orbits. 
\item $G$ is $H$-{\bf finitely generated} if there exists an $H$-finite subset of $G$ generating $G$.
Equivalently, if there exists a finite set $S \subset G$ such that 
$\bar{S} = \{h(s) \colon s \in S \cup S^{-1}, h \in H\}$ generates $G$. 
\end{itemize}
Having an $H$-finite set $\bar{S}$ which generates $G$, 
we consider the word norm $|\mathord{\cdot}|_{\bar{S}}$ on $G$ defined by $\bar{S}$.  
Note, that $|\mathord{\cdot}|_{\bar{S}}$ is $H$-invariant, i.e.,
$|h(g)|_{\bar{S}} = |g|_{\bar{S}}$ for $h \in H$ and $g \in G$.

The norm $|\mathord{\cdot}|_{\bar{S}}$ depends on the choice of $H$-finite set $\bar{S}$. 
However, as long as $\bar{S}$ is $H$-finite, $|\mathord{\cdot}|_{\bar{S}}$ belongs to the same bi-Lipschitz equivalence class.
We denote this equivalence class by $|\mathord{\cdot}|_H$.
The norm $|\mathord{\cdot}|_H$ is maximal among all $H$-invariant norms on $G$, namely: 
for every $H$-invariant norm $|\mathord{\cdot}|$, there exists $C>0$ such that $|x| < C|x|_H$ for every $x \in G$. 

Examples include (we always assume that $G$ is $H$-finitely generated):
\begin{enumerate}
\item $H$ is trivial, then $|\mathord{\cdot}|_H$ is the standard word norm.
\item $H$ is the group of inner automorphisms of $G$, then $|\mathord{\cdot}|_H$ is the conjugation invariant word norm, see e.g. \cite{bgkm}.
\item $H$ is the full automorphism group, then $|\mathord{\cdot}|_H$ is called the $\Aut$-norm on $G$. 
\end{enumerate}

Let us show that the primitive norm and the simple loops norm are examples of $\Aut$-norms on free and surface groups.
It is clear that the primitive norm is an $\Aut$-norm since the set of primitive elements in $\Bo F_n$ equals to the set
$$\{\psi(b)~|~\psi \in\Aut(\Bo F_n)\},$$ 
where $b$ is an arbitrary chosen primitive element.
Thus this set is a single $\Aut(\Bo F_n)$-orbit. 
In the case of the simple loops norm we use the Baer-Dehn-Nielsen theorem which states:
$$\Aut(\Bo\Gamma_g)\cong\MCG^{\pm}(S_g,\star).$$
Now it follows from the classification of surfaces, 
that $\Aut(\Bo\Gamma_g)$-orbit of a simple element $s\in \Gamma_g$ is determined by the homeomorphism 
type of the surface $S_g \backslash \gamma_s$, where $\gamma_s$ is the corresponding simple loop. 
Since there are only finitely many homeomorphism types of such surfaces, 
the set of simple elements consists of finitely many $\Aut(\Bo\Gamma_g)$-orbits.

\subsection{Quasimorphisms and distortion.}
Let us recall a notion of a quasimorphism. A function $q\colon G\to\Bo R$ 
is called a \textbf{quasimorphism} if there exists $D$ such that 
$$|q(a)-q(ab)+q(b)|<D$$
for all $a,b\in G$. The minimal such $D$ is called the {\bf defect} of $q$ and denoted by $D_q$.
A quasimorphism $q$ is \textbf{homogeneous} if $q(x^n)=nq(x)$ for all $n \in\Bo Z$ and all $x \in G$. 
Homogeneous quasimorphisms are constant on conjugacy classes, i.e., $q(x) = q(yxy^{-1})$ for all $x,y\in G$.
We refer to \cite{scl} for further details.

\begin{lemma}\label{lem:qm.undist}
Let $G$ be a group and let $H <\Aut(G)$. Assume that $\bar{S}~\subset~G$ is an 
$H$-finite subset which generates $G$.
Let $x \in G$. If there exists a homogeneous quasimorphism $q\colon G \to\Bo R$ such that $q(x)\neq 0$ and $q$ is 
bounded on $\bar{S}$, then $x$ is undistorted in the norm $|\mathord{\cdot}|_H$. 
\end{lemma}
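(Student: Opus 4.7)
The plan is the standard quasimorphism-versus-word-length comparison, applied to a minimal factorization of $x^n$ over $\bar{S}$. Since $\bar{S}/H$ is finite and $q$ is $H$-invariant on values only under the (typically absent) further assumption — but here we only need boundedness on $\bar{S}$ itself, which is given — set $M := \sup_{s\in\bar{S}}|q(s)| < \infty$.

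First I would fix $n\in\Bo Z$ and let $k := |x^n|_{\bar{S}}$, writing $x^n = s_1 s_2 \cdots s_k$ with each $s_i \in \bar{S}$. Iterating the quasimorphism defect inequality $|q(ab) - q(a) - q(b)| \le D_q$ gives, by a straightforward induction on the number of factors,
\[
\Bigl| q(x^n) - \sum_{i=1}^k q(s_i) \Bigr| \le (k-1)D_q,
\]
and so $|q(x^n)| \le kM + (k-1)D_q \le k(M + D_q)$.

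Next I would invoke homogeneity, which gives $q(x^n) = n\,q(x)$, to rewrite the previous bound as
\[
|n\,q(x)| \le |x^n|_{\bar{S}} \cdot (M + D_q).
\]
Setting $C := |q(x)|/(M + D_q) > 0$, which is strictly positive since $q(x) \ne 0$ and $M + D_q < \infty$, we get $|x^n|_{\bar{S}} \ge Cn$ for every $n \ge 0$. Thus $x$ is undistorted with respect to $|\mathord{\cdot}|_{\bar{S}}$, and since distortion depends only on the bi-Lipschitz equivalence class of the norm and $|\mathord{\cdot}|_{\bar{S}}$ represents $|\mathord{\cdot}|_H$, the element $x$ is undistorted with respect to $|\mathord{\cdot}|_H$.

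There is no real obstacle here: the argument is the usual one for deducing stable norm lower bounds from homogeneous quasimorphisms (cf.\ \cite{scl}), and the $H$-finiteness of $\bar{S}$ enters only implicitly through the fact that $|\mathord{\cdot}|_H$ is the bi-Lipschitz class of $|\mathord{\cdot}|_{\bar{S}}$. The only point to double-check is the telescoping inequality above, and that the homogeneity hypothesis is applied to the integer power $x^n$ rather than being confused with $H$-invariance of $q$, which is not assumed in the statement.
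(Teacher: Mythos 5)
Your argument is correct and is essentially identical to the paper's own proof: both bound $|q(x^n)|$ by $(C+D_q)\,|x^n|_{\bar S}$ via the telescoped defect inequality over a minimal factorization into elements of $\bar S$, then use homogeneity $q(x^n)=nq(x)$ and $q(x)\neq 0$ to force linear growth of $|x^n|_{\bar S}$. Your extra remark that distortion only depends on the bi-Lipschitz class represented by $|\mathord{\cdot}|_{\bar S}$ is a point the paper leaves implicit, but there is no substantive difference.
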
 

\begin{proof}
Let $w \in G$ and $C$ be such that $|q(s)|<C$ for all $s \in \bar{S}$. 
We can write $w$ in the following form: $w = s_1\ldots s_n$ where $n = |w|_H$ and $s_i \in \bar{S}$. 
The following inequality holds:
$$
|q(w)|\leq\sum_1^n |q(s_i)|+(n-1)D_q\leq nC+(n-1)D_q \leq (C+D_q)|w|_H.
$$
We apply this to $x^n$ and get  
$$n|q(x)|=|q(x^n)|\leq (C+D_q)|x^n|_H.$$
Since $q(x) \neq 0$, $|x^n|_H$ growths linearly with $n$. 
\end{proof}

\begin{remark}\label{rem:qm.undist}\rm
In Lemma~\ref{lem:qm.undist}, instead of assuming that $q$ is homogeneous, it is enough to assume that $q(x^n)$ growths linearly. 
\end{remark}

\begin{corollary}\label{cor:H-inv}
Assume that $q$ is an $H$-invariant homogeneous quasimorphism and $q(x)\neq 0$. 
Then $x$ is undistorted in $|\mathord{\cdot}|_H$. 
\end{corollary}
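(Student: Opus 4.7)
The plan is to deduce this directly from Lemma~\ref{lem:qm.undist}. Since we assume $G$ is $H$-finitely generated, I would pick a finite $S\subset G$ whose $H$-orbit closure $\bar{S}=\{h(s):s\in S\cup S^{-1},\,h\in H\}$ generates $G$. By the definition of $|\mathord{\cdot}|_H$ as a bi-Lipschitz equivalence class, it suffices to prove that $x$ is undistorted with respect to the specific word norm $|\mathord{\cdot}|_{\bar{S}}$, and by Lemma~\ref{lem:qm.undist} this in turn reduces to showing that $q$ is bounded on $\bar{S}$.

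The key observation is that the two hypotheses on $q$ --- $H$-invariance and homogeneity --- match exactly the two operations used to build $\bar{S}$ from $S$. The $H$-invariance gives $q(h(g))=q(g)$ for every $h\in H$ and $g\in G$, while homogeneity yields $q(g^{-1})=-q(g)$. Consequently the image $q(\bar{S})$ coincides with the finite set $\{\pm q(s):s\in S\}$, so $q$ is bounded on $\bar{S}$. Invoking Lemma~\ref{lem:qm.undist} with this choice of $\bar{S}$ completes the argument.

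There is essentially no substantive obstacle here; the corollary is a one-line specialization of the preceding lemma, emphasizing that $H$-invariance is the cleanest sufficient hypothesis for boundedness on any $H$-finite generating set. The only small point worth checking is that the freedom to choose $\bar{S}$ does no harm, which is guaranteed by the discussion in Subsection~\ref{sub:general} showing that any two $H$-finite generating sets yield bi-Lipschitz equivalent norms, so undistortion with respect to one representative transfers to the entire class $|\mathord{\cdot}|_H$.
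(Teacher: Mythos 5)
Your proof is correct and is exactly the paper's argument: the paper's entire proof reads ``In this case $q$ is bounded on the set $\bar{S}$,'' and your write-up simply supplies the (correct) justification that $H$-invariance plus homogeneity forces $q(\bar{S})=\{\pm q(s):s\in S\}$, after which Lemma~\ref{lem:qm.undist} applies. No discrepancy to report.
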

 
Let $q\colon G\to\Bo R$ be a quasimorphism. We define 
$$
\bar{q}(a)=\lim_{n\to\infty}q(a^n)/n.
$$
Straight-forward computations show that $\bar{q}$ is a homogeneous quasimorphism, see e.g. \cite{scl}. 
The following lemma relates general quasimorphisms with homogeneous quasimorphisms. 

\begin{lemma}[\cite{scl}]\label{lem:homo}
Let $q\colon G\to\Bo R$ be a quasimorphism such that $q(x^n)$ growths linearly.
Then a homogeneous quasimorphism $\bar{q}\colon G\to \Bo R$, called the homogenization of~$q$,  satisfies
$|q(a)-\bar{q}(a)| \leq D_q$ for all $q \in G$, and $\bar{q}(x) \neq 0$. 
\end{lemma}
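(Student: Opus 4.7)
The plan is to perform the standard homogenization
$$\bar{q}(x) := \lim_{n\to\infty}\frac{q(x^n)}{n},$$
and to verify in turn that the limit exists, that it lies within $D_q$ of $q$, that it is homogeneous, that it is a quasimorphism, and finally that it is nonzero at the given $x$.

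First I would show existence of the limit. Iterating the defect inequality on the pairs $(x^k, x)$ gives, by an easy induction,
$$|q(x^n) - n q(x)| \le (n-1)D_q$$
for every $n\ge 1$. This bound already shows that $q(x^n)/n$ is bounded, and combined with the submultiplicative relation $|q(x^{m+n}) - q(x^m) - q(x^n)| \le D_q$ it shows (via a Fekete-type argument applied to the almost-subadditive sequence $a_n = q(x^n)$) that $q(x^n)/n$ is Cauchy, hence converges to some real number $\bar q(x)$. Dividing the same inequality $|q(a^n) - nq(a)| \le (n-1)D_q$ by $n$ and letting $n\to\infty$ delivers $|q(a) - \bar q(a)| \le D_q$ for every $a\in G$. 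Homogeneity over positive integers is immediate from
$$\bar{q}(x^k) = \lim_{n\to\infty}\frac{q(x^{kn})}{n} = k\lim_{n\to\infty}\frac{q(x^{kn})}{kn} = k\bar{q}(x),$$
and it extends to $k\in\mathbb Z$ via the bound $|q(x^n) + q(x^{-n})| \le D_q + |q(1_G)|$, which after dividing by $n$ forces $\bar q(x^{-1}) = -\bar q(x)$. That $\bar q$ is itself a quasimorphism follows from a standard telescoping argument on $(ab)^n$ versus $a^nb^n$, yielding a defect bounded by a multiple of $D_q$.

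For the final conclusion, the hypothesis that $q(x^n)$ grows linearly provides $c > 0$ and $n_0$ with $|q(x^n)| \ge cn$ for $n\ge n_0$, hence $|q(x^n)/n| \ge c$; passing to the limit gives $|\bar q(x)| \ge c > 0$. The main obstacle is the fourth step, showing that the limit function $\bar q$ is a quasimorphism: this is the only point where one needs to control the error accumulated while rearranging the factors of $(ab)^n$ into $a^nb^n$ using the defect of $q$, everything else being a direct consequence of the uniform estimate $|q(x^n) - nq(x)| \le (n-1)D_q$.
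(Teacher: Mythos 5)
Your proposal is correct and is exactly the paper's approach: the paper defines $\bar{q}(a)=\lim_{n\to\infty}q(a^n)/n$ and defers the "straight-forward computations" to Calegari's book, which you have simply written out. One small remark: the step you single out as the main obstacle (that $\bar q$ is a quasimorphism) is actually immediate from your own estimate $|q(a)-\bar q(a)|\le D_q$, since any function at bounded distance from a quasimorphism is a quasimorphism (here with defect at most $4D_q$), so no telescoping on $(ab)^n$ is needed.
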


\section{$\Aut$-norms on free groups.}
In this section we use counting quasimorphisms in order to investigate distortion in free groups.

\subsection{Whitehead graph.} 
Let $\Bo F$ be a free group (possibly not finitely generated). Let $w \in\Bo F$ and let $B$ be any free basis of $\Bo F$.
Here we use the convention that if $b \in B$, then $b^{-1} \notin B$. 
Suppose that $w$ is a cyclically reduced word with respect to the basis $B$ and $w = w_1w_2\ldots w_n$ is the reduced expression of $w$ in $B$. 
We define the Whitehead graph $\Wh_B(w)$ as follows: for each element $b\in B$ we have two vertices in $\Wh_B(w)$ labeled by $b$ and $b^{-1}$.  
For every two consecutive letters $w_i,w_{i+1}$ in $w$, we draw an edge from $w_i$ to $w_{i+1}^{-1}$. 
We regard $w_n,w_0$ as being consecutive in $w$, that is, we have an edge from $w_n$ to $w_0^{-1}$. 
If $w$ is not reduced with respect to $B$, then denote by $r_B(w)$ the unique cyclically reduced word in the
conjugacy class of $w$. We define $\Wh_B(w):=\Wh_B(r_B(w))$.  

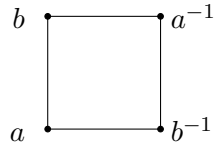
\begin{figure}[h]
\begin{tikzpicture}[scale=1.5]
\draw[fill] (0,0) circle [radius=0.025];
\node [left] at (0,0) {$a^{\phantom{1}}$};
\draw[fill] (0,1) circle [radius=0.025];
\node [left] at (0,1) {$b^{\phantom{1}}$};
\draw[fill] (1,0) circle [radius=0.025];
\node [right] at (1,0) {$b^{-1}$};
\draw[fill] (1,1) circle [radius=0.025];
\node [right] at (1,1) {$a^{-1}$};
\draw (0,1) -- (1,1);
\draw (0,0) -- (1,0);
\draw (0,0) -- (0,1);
\draw (1,0) -- (1,1);
\end{tikzpicture}
\caption{$\Wh_{\{a,b\}}(aba^{-1}b^{-1}).$}
\end{figure}

\begin{theorem}[Whitehead \cite{wh1}]\label{whitehead}
If $b\in\Bo F$ is a primitive element and $\Wh_B(b)$ is connected, then $\Wh_B(b)$ has a cut-vertex.
\end{theorem}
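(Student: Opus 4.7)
The plan is to prove the contrapositive via Whitehead's classical peak-reduction argument. First I would reduce to the case of finite rank, by restricting to the free factor of $\Bo F$ generated by those letters of $B$ that appear in the cyclically reduced form of $b$; I would assume $b$ is cyclically reduced and dispose of the length-one case by direct inspection. Since $b$ is primitive, there exists $\phi\in\Aut(\Bo F)$ with $\phi(b)\in B$, so $b$ is not of minimal cyclic length in its $\Aut(\Bo F)$-orbit.

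Next, I would invoke Whitehead's generation theorem: $\Aut(\Bo F)$ is generated by permutation automorphisms (which permute and invert elements of $B$, preserving cyclic length) together with Whitehead automorphisms of the second kind $\sigma_{A,a}$, parametrised by $a\in B\cup B^{-1}$ and $A\subset B\cup B^{-1}$ with $a\in A$, $a^{-1}\notin A$; each $\sigma_{A,a}$ fixes $a$ and modifies every other letter $x$ by a factor of $a^{\pm 1}$ determined by the membership of $x$ and $x^{-1}$ in $A$. Whitehead's peak reduction lemma then asserts that if $w$ is cyclically reduced and not of minimal cyclic length in its $\Aut(\Bo F)$-orbit, some single Whitehead automorphism $\sigma_{A,a}$ already strictly reduces its cyclic length. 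Applied to $w=b$ this produces a $\sigma=\sigma_{A,a}$ with the cyclic length of $\sigma(b)$ strictly less than that of $b$.

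The final ingredient will be Whitehead's cut-vertex lemma, a direct letter-by-letter bookkeeping showing that the cyclic length change under $\sigma_{A,a}$ is determined entirely by the edges of $Wh_B(b)$ relative to the cut induced by the subset $A\setminus\{a\}$, with appropriate weights at the vertex $a$ itself. A strict decrease will therefore force $A\setminus\{a\}$ to be a nonempty proper union of connected components of $Wh_B(b)\setminus\{a\}$, i.e., removing the vertex $a$ disconnects $Wh_B(b)$. Hence $a$ is a cut vertex, contradicting the hypothesis that $Wh_B(b)$ is connected without a cut vertex.

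The main obstacle will be the peak reduction lemma itself: a greedy descent, picking any length-decreasing Whitehead automorphism at each step, can stall at a word of merely locally minimal cyclic length, so one cannot simply pick $\sigma$ directly. Whitehead's idea is to analyse a ``peak'' $(\sigma',\sigma)$ of Whitehead automorphisms whose middle word has cyclic length strictly greater than both of its neighbours, and to show that any such peak can be rewritten as a sequence of Whitehead automorphisms avoiding it; iterating this local rearrangement yields a monotone factorisation, from which the single-step reduction follows. This combinatorial rewriting, together with the exact edge-counting formula underlying the cut-vertex lemma, is where the bulk of the technical work will lie.
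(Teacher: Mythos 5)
The paper offers no proof of Theorem~\ref{whitehead}: it is quoted as a classical result of Whitehead and used as a black box (only \cite{wh1} is cited), so there is no in-text argument to measure yours against. Your outline is the standard classical proof --- peak reduction plus Whitehead's cut-vertex lemma --- and it is the right approach; I see no wrong step. A few remarks on completeness. Under the connectivity hypothesis the reduction to finite rank is essentially automatic: any letter of $B$ not occurring in $r_B(b)$ contributes two isolated vertices, so connectedness already forces $b$ to involve every letter of $B$; and the length-one case must indeed be excluded by hand (there $Wh_B(b)$ is a single edge, which is disconnected from the remaining vertices once the rank is at least two). The step you describe as ``direct letter-by-letter bookkeeping'' really splits into two parts of unequal difficulty: the bookkeeping yields the exact formula for the change in cyclic length of $\sigma_{A,a}$ in terms of the number of edges of $Wh_B(b)$ crossing the partition determined by $A$, but passing from a strict decrease to the existence of a cut vertex at $a$ is a separate combinatorial argument (the cut-vertex lemma proper), not a mere reading-off of the formula. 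Likewise peak reduction, which you correctly identify as the technical heart, is only named. So what you have is an accurate and correctly structured skeleton of Whitehead's argument rather than a self-contained proof; the same circle of ideas is what the paper imports via Proposition~\ref{P:stal} and exploits in Corollary~\ref{C:separable}.
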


A vertex $v$ of a graph is a \textbf{cut-vertex} if after removing $v$, the graph has more connected components.   
In the example below $a$, $b$, $a^{-1}$ and $b^{-1}$ are cut-vertices. 

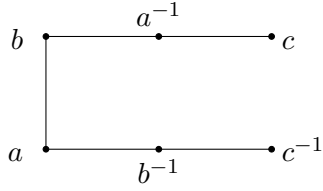
\begin{figure}[h]
\begin{tikzpicture}[scale=1.5]

\draw[fill] (2,0) circle [radius=0.025];
\node [right] at (2,0) {$c^{-1}$};
\draw[fill] (2,1) circle [radius=0.025];
\node [right] at (2,1) {$c^{\phantom{1}}$};
\draw[fill] (0,0) circle [radius=0.025];
\node [left] at (0,0) {$a^{\phantom{1}}$};
\draw[fill] (0,1) circle [radius=0.025];
\node [left] at (0,1) {$b^{\phantom{1}}$};
\draw[fill] (1,0) circle [radius=0.025];
\node [below] at (1,0) {$b^{-1}$};
\draw[fill] (1,1) circle [radius=0.025];
\node [above] at (1,1) {$a^{-1}$};
\draw (0,1) -- (1,1);
\draw (0,0) -- (1,0);
\draw (0,0) -- (0,1);
\draw (1,1) -- (2,1);
\draw (1,0) -- (2,0);
\end{tikzpicture}
\caption{$\Wh_{\{a,b,c\}}(aba^{-1}b^{-1}c).$}
\end{figure}

\begin{lemma}\label{L:p^2}
Let $B$ be a free basis of $\Bo F$ and let $b \in\Bo F$ be a cyclically reduced base element. 
Assume that $x\in\Bo F$ is a cyclically reduced word such that its Whitehead graph $\Wh_B(x)$ is connected and has no cut-vertices. 
Then the reduced expression of $b$ in the basis $B$ does not contain the reduced expressions of $x^2$ and of $x^{-2}$ as subwords.
\end{lemma}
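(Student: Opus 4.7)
I would argue by contradiction; assume the reduced expression of $b$ contains $x^2$ as a subword (the case of $x^{-2}$ is symmetric), and write $x^2 = x_1 x_2 \cdots x_n x_1 \cdots x_n$.

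The first key observation is that every edge of the cyclic Whitehead graph $Wh_B(x)$ appears as an edge of $Wh_B(b)$. Inside the occurrence of $x^2$ in $b$, all $n-1$ internal transitions $(x_i, x_{i+1})$ appear (in fact twice, once per copy of $x$), and crucially the ``wraparound'' transition $(x_n, x_1)$ appears once, at the junction between the two copies of $x$ inside $x^2$. Thus $Wh_B(x)$ embeds as a sub-multigraph of $Wh_B(b)$, and in particular the $2$-connectedness of $Wh_B(x)$ forces the vertex set $V(x)$ to lie in a single $2$-connected block of $Wh_B(b)$.

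Next, Theorem~\ref{whitehead} applied to the base element $b$ yields that $Wh_B(b)$ is either disconnected or has a cut-vertex, and Whitehead's algorithm supplies a Whitehead automorphism $\sigma = (A,v)$ that strictly reduces the cyclic length: $|\sigma(b)|_B < |b|_B$. On the other hand, the explicit Whitehead length-change formula for a Whitehead automorphism acting on a cyclically reduced word shows that $|\sigma(x)|_B \geq |x|_B$ for every Whitehead automorphism $\sigma$: no such $\sigma$ can shorten a cyclically reduced word whose Whitehead graph is $2$-connected, since any such length-reduction would be associated with a cut-vertex of $Wh_B(x)$.

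The final step is to derive the contradiction by a per-edge decomposition of the length-change formula applied to $b$. Write $b = p \cdot x^2 \cdot q$ in reduced form. The edges of $Wh_B(b)$ split into \emph{interior} edges coming from transitions inside $x^2$ (which constitute $Wh_B(x)$ with each non-wrap edge of multiplicity two and the wrap edge of multiplicity one), and \emph{exterior} edges coming from transitions inside $p$, inside $q$, the two $x^2$-boundary transitions, and the cyclic transition. The interior contribution to $|\sigma(b)|_B - |b|_B$ is essentially $2 \cdot (|\sigma(x)|_B - |x|_B)$ minus a bounded wrap-edge correction, hence non-negative up to a bounded constant. Combining this with the fact that $V(x)$ sits inside a single $2$-connected block of $Wh_B(b)$ --- so that the cut $(A,v)$ must live at the ``attachment'' of this block to the rest of $Wh_B(b)$ rather than inside it --- a careful accounting of the exterior contribution shows that it cannot be negative enough to force $|\sigma(b)|_B < |b|_B$, contradicting Whitehead's algorithm.

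I expect the main obstacle to be precisely this per-edge accounting: one must make rigorous the decomposition of the Whitehead length-change formula into ``interior'' and ``exterior'' parts with the correct edge-multiplicities, and then use the block-structure constraint imposed by the $2$-connectedness of $Wh_B(x)$ to bound the exterior contribution sharply enough to contradict the strict length reduction of $b$. This requires the explicit (and somewhat asymmetric) form of the length-change formula together with attention to the single wrap-edge appearance inside $x^2$.
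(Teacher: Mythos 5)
Your opening observation is exactly the right one, and it is the same as the paper's: because $x^2$ occurs as a subword of $b$, every edge of $Wh_B(x)$ --- including the cyclic wraparound edge $(x_n,x_1)$, which is realized at the junction of the two copies of $x$ --- appears in $Wh_B(b)$, so $Wh_B(x)$ is a subgraph of $Wh_B(b)$. But you then miss the point that this already finishes the proof. By the paper's convention the vertex set of $Wh_B(w)$ is always all of $B\cup B^{-1}$, independent of $w$; since $Wh_B(x)$ is assumed connected, it is a \emph{spanning} connected subgraph of $Wh_B(b)$ with no cut-vertex. Adding edges to such a graph cannot disconnect it or create a cut-vertex, so $Wh_B(b)$ is itself connected with no cut-vertex, directly contradicting Theorem~\ref{whitehead}. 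There is nothing more to do. Your phrase ``forces the vertex set $V(x)$ to lie in a single $2$-connected block of $Wh_B(b)$'' suggests you are treating $V(x)$ as a possibly proper subset of the vertices of $Wh_B(b)$; it is not, and once you see that, the block decomposition, the Whitehead length-change formula, and the interior/exterior edge accounting are all unnecessary.

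As it stands, your argument has a genuine gap precisely where you flag it: the ``careful accounting of the exterior contribution'' that is supposed to contradict $|\sigma(b)|_B<|b|_B$ is never carried out, and the claim that the cut pair $(A,v)$ ``must live at the attachment of the block'' is asserted, not proved. That route amounts to re-deriving a chunk of the Whitehead/Stallings machinery from the length-change formula, which is both harder than the problem requires and incomplete in your write-up. Replace everything after your first paragraph with the two-line observation above and the proof is done.
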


\begin{proof}
Let us fist notice, that if a reduced expression of $b$ contains the reduced expression of $x^2$, then 
$\Wh_B(b)$ contains $\Wh_B(x)$ as a subgraph. 
Indeed, every edge of $\Wh_B(x)$ is given by two cyclically consecutive letters in $x$.
If $x^2$ is a subword of $b$, we can find all those edges in $\Wh_B(b)$.
$\Wh_B(x)$ has no cut-vertices, it is connected and has the same vertex set as $\Wh_B(b)$.
Thus if $\Wh_B(x)$ is a subgraph of $\Wh_B(b)$, then $\Wh_B(b)$ is a connected graph with no cut-vertices. 
This contradicts the Whitehead Theorem~\ref{whitehead}.
\end{proof}

\begin{remark}\label{R:p^2}
In Lemma~\ref{L:p^2}, the assumption that $b$ is cyclically reduced can not be omitted. 
For general $b$ one can show that the number of occurrences of $x^2$ minus 
the number of occurrences of $x^{-2}$ is bounded by $2|x|_B$.
Here $|\mathord{\cdot}|_B$ is the word metric defined by $B$.
We would like to add that a variation of Lemma~\ref{L:p^2} already appeared in~\cite{prim.width}.
\end{remark}

\subsection{Counting quasimorphisms.}

Let $w\in\Bo F$ and let $B$ be a basis of $\Bo F$. 
Having an element $x\in\Bo F$, we can write $x$ and $w$ as reduced expressions in the base $B$.
We define $C_w(x)$ to be the number of occurrences of $w$ as a subword of $x$ (the subwords can overlap). 
In \cite{brooks} Brooks proved that the following function
$$
Br_w(x) = C_w(x) - C_w(x^{-1})
$$
is a quasimorphism. See also \cite{Rhem}. Usually we suppress the basis $B$ from the notation. 

\begin{lemma}\label{L:brooks}
Let $x\in\Bo F$ be a cyclically reduced word.
Assume that $\Wh_B(x)$ is connected with no cut-vertices.
Then $x$ is undistorted in the $\Aut$-norm.
Moreover, there exists a homogeneous quasimorphism which is bounded on primitive elements and is non-zero on $x$.
\end{lemma}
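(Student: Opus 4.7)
The plan is to construct a homogeneous quasimorphism $q\colon\Bo F\to\Bo R$ which is bounded on the set of primitive elements and non-zero on $x$; once we have such a $q$, the undistortedness of $x$ in the $\Aut$-norm follows immediately from Lemma~\ref{lem:qm.undist} applied with $H=\Aut(\Bo F)$ and $\bar S$ the set of primitives.

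First I pass from $x$ to its root: write $x=u^k$ with $u$ cyclically reduced and not a proper power. Since the cyclic adjacencies of $u$ and $u^k$ agree, $Wh_B(u)=Wh_B(x)$ is connected with no cut-vertex, and by homogeneity $q(x)=k\cdot q(u)$, so it suffices to produce $q$ and check $q(u)\neq 0$. My candidate is $q:=\overline{Br}_{u^2}$, the homogenization (Lemma~\ref{lem:homo}) of the Brooks quasimorphism associated to the pattern $u^2$ in the basis $B$.

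Boundedness of $q$ on primitives is almost immediate from Remark~\ref{R:p^2}: for every primitive $p\in\Bo F$ one has $|Br_{u^2}(p)|=|C_{u^2}(p)-C_{u^{-2}}(p)|\leq 2|u|_B$, and since $q$ differs from $Br_{u^2}$ pointwise by at most the defect $D_{Br_{u^2}}$, the bound $|q(p)|\leq 2|u|_B+D_{Br_{u^2}}$ is uniform in $p$. To evaluate $q(u)=\lim_{n\to\infty}Br_{u^2}(u^n)/n$ I would note that $u^n$ is already reduced (since $u$ is cyclically reduced) and count: a short Fine--Wilf style periodicity check shows that, because $u$ is not a proper power, the only occurrences of $u^2$ in $u^n$ are the obvious ones at positions $0,|u|,\ldots,(n-2)|u|$, giving $C_{u^2}(u^n)=n-1$. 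The crucial claim is $C_{u^{-2}}(u^n)=0$: an occurrence of $u^{-2}$ at position $q|u|+r$ in $u^n$ exhibits $u^{-2}$ as a cyclic rotation of $u^2$, hence as a conjugate of $u^2$ in $\Bo F$, and by uniqueness of square roots in the torsion-free group $\Bo F$ this forces $u$ to be conjugate to $u^{-1}$---impossible, as I explain below. Thus $q(u)=1$ and $q(x)=k\neq 0$.

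The main obstacle is ruling out conjugacy of $u$ and $u^{-1}$ for a cyclically reduced nontrivial $u\in\Bo F$. My argument is as follows. Suppose $u^{-1}=R^j(u)$ for some cyclic rotation $R^j$. Matching letters defines an involution $\sigma$ on $\{1,\ldots,|u|\}$ with $w_i=w_{\sigma(i)}^{-1}$, which must be fixed-point free (else $w_i=w_i^{-1}$, impossible in a free group). A short parity count on the explicit formula $\sigma(i)\equiv|u|+1+j-i\pmod{|u|}$ then produces some cyclically consecutive pair $(i,i+1)$ with $\sigma(i)=i+1$, yielding the contradiction $w_iw_{i+1}=1$ with cyclic reducedness of $u$. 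Once this is in hand, everything else is a routine application of the Brooks construction, Remark~\ref{R:p^2}, and Lemma~\ref{lem:qm.undist}.
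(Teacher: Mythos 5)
Your proposal is correct and follows essentially the same route as the paper: the Brooks counting quasimorphism on the square of (the root of) $x$, bounded on primitives via Lemma~\ref{L:p^2} and Remark~\ref{R:p^2}, shown to grow linearly on powers of $x$, and then homogenized via Lemma~\ref{lem:homo}. The only differences are cosmetic: the paper applies $Br_{x^2}$ directly without extracting the root $u$, and it asserts the linear growth of $Br_{x^2}(x^n)$ without proof, whereas you supply the (correct) verification that no occurrence of $u^{-2}$ can appear in $u^n$ because a cyclically reduced word in a free group is never conjugate to its inverse.
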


\begin{proof}
Let $Br_{x^2}$ be a counting quasimorphism defined with respect to the basis $B$. 
Let $b\in\Bo F$ be a primitive element. By Lemma~\ref{L:p^2} and Remark \ref{R:p^2} we have that 
$$|Br_{x^2}(b)| \leq 2|x|_B.$$
Moreover, $|Br_{x^2}(x^n)|$ growths linearly with $n$, thus by Remark \ref{rem:qm.undist}, $x$ is undistorted in the $\Aut$-norm. 
In order to obtain a desired quasimorphism, it is enough to consider the homogenization of $Br_{x^2}$, see Lemma \ref{lem:homo}.
\end{proof}

\subsection{Separable sets.}
Let $A$ be a finite subset of $\Bo F$. We call $A$ \textbf{separable}, 
if there exist two non-trivial free factors $F_1$,$F_2$ such that $\Bo F=F_1\ast F_2$,
and every element in $A$ can be conjugated into $F_1$ or $F_2$. 
We always assume that elements in $A$ are cyclically reduced.
Note that $A$ is separable if and only if $\phi(A)$ is separable, where $\phi\in\Aut(\Bo F)$. 
The following proposition follows from Proposition~2.2 and Proposition~2.3 in \cite{stallings}.

\begin{proposition}\label{P:stal}
Let $x \in\Bo F$. If $\Wh_B(x)$ is disconnected, then $x$ is separable. 
If $\Wh_B(x)$ is connected and has a cut-vertex, then there is $\psi\in \Aut(\Bo F)$, such that
$|\psi(x)|_B < |x|_B$.
\end{proposition}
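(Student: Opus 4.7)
My plan is to realize both assertions via Whitehead automorphisms. Recall that for any subset $W$ of the vertex set $V = B \sqcup B^{-1}$ of $Wh_B(x)$ with $a \in W$ and $a^{-1} \notin W$, the associated Whitehead automorphism $\psi = (W, a) \in \Aut(\Bo F)$ multiplies each basis letter $b \neq a$ by $a$ or $a^{-1}$ on the appropriate sides, according to the membership pattern of $\{b, b^{-1}\}$ in $W$. Its effect on the cyclic length of a cyclically reduced word can then be computed edge by edge on the Whitehead graph.

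For the first assertion, assume $Wh_B(x)$ admits a partition $V = V_1 \sqcup V_2$ of its vertex set with no edges between $V_1$ and $V_2$. The set $V$ carries the involution $b \leftrightarrow b^{-1}$. If the partition respects this involution, then $B = B_1 \sqcup B_2$ with $B_i = B \cap V_i$, and we obtain a free decomposition $\Bo F = \langle B_1 \rangle * \langle B_2 \rangle$. Every consecutive pair $(u, v)$ in the reduced expression of $x$ produces an edge from $u$ to $v^{-1}$ that lies in a single $V_i$, so $u, v \in B_i$ by the involution-respecting property; hence $x$ sits entirely in one factor and is separable. If the partition does not respect the involution, we first apply a Whitehead automorphism to pass to a new basis whose Whitehead graph for (the image of) $x$ remains disconnected and admits an involution-respecting partition, reducing to the previous case.

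For the second assertion, let $a$ be a cut-vertex of $Wh_B(x)$, let $C$ be a component of $Wh_B(x) \setminus \{a\}$ not containing $a^{-1}$, and set $W = C \cup \{a\}$. Write $\psi = (W, a)$. Going over cyclically consecutive pairs of letters of $x$, each edge of $Wh_B(x)$ contributes to the length difference $|\psi(x)|_B - |x|_B$ an amount determined by its position relative to $W$: the contribution is $0$ when both endpoints lie in $W \setminus \{a\}$ or both lie outside $W$; it is $+1$ when the edge crosses between these two sides; and it is $-1$ when the edge is incident to $a$, since then an inserted $a^{\pm 1}$ meets a neighboring $a^{\mp 1}$ and cancels. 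By construction no edges cross the partition, whereas the valence of $a$ in $Wh_B(x)$ is at least two (it is a cut-vertex of a connected graph with at least three vertices), so the total change is strictly negative.

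The main obstacle is the length bookkeeping in the second assertion: carrying out the case analysis on consecutive letter pairs under $\psi$ is delicate when a letter equals $a^{\pm 1}$ and the inserted letters interact with their neighbors. An analogous subtlety in the first assertion is the reduction to an involution-respecting partition, which rests on the classical fact that Whitehead moves act transitively enough on the set of bases of $\Bo F$ to produce such a splitting without destroying the disconnection of the Whitehead graph.
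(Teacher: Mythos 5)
The paper does not actually prove this proposition: it is imported wholesale from Stallings (Propositions 2.2 and 2.3 of \cite{stallings}), so you are supplying an argument the authors deliberately outsource. Your general strategy, Whitehead automorphisms plus edge-counting on the Whitehead graph, is indeed how Whitehead and Stallings proceed, but both halves of your sketch have gaps that go beyond bookkeeping. For the first assertion, the whole difficulty sits in the case you dispose of in one sentence. Take $x=(ab)^2$ in $\Bo F_2$: its Whitehead graph is disconnected with components $\{a,b^{-1}\}$ and $\{a^{-1},b\}$, neither closed under inversion, and the free factor exhibiting separability is $\langle ab\rangle$, which is not generated by a subset of the basis. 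There is no classical fact to the effect that a single Whitehead move turns such a disconnection into an involution-respecting one while preserving disconnectedness: the moves $(C,\cdot)$ built from the components themselves \emph{increase} the length of $(ab)^2$, and the move that does work ($b\mapsto a^{-1}b$) is not produced by your recipe. A complete proof here needs either Stallings' topological argument (realize the Whitehead graph as arcs in a disk around the vertex of the rose, find a separating arc disjoint from $x$, and read off a free splitting relative to $x$), or an induction on length whose inductive step you would have to justify, since the Whitehead graph of the shortened element need not remain disconnected a priori.

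For the second assertion, your three contribution cases are not mutually exclusive (an edge from $a$ to the complement of $W$ both ``crosses'' and ``is incident to $a$''), and the claim that ``by construction no edges cross the partition'' is false: $a$ is a cut-vertex, so it has neighbours outside $W$, and those edges cross. More seriously, whether $(W,a)$ with $W=C\cup\{a\}$ shortens $x$ depends on the sign convention for Whitehead automorphisms, which you leave unspecified. Under the standard convention in which each letter of $W$ acquires $a^{-1}$ on the right (equivalently, each letter whose inverse lies in $W$ acquires $a$ on the left), the cyclic length changes by $E(P,\bar{P}\setminus\{a^{-1}\})-E(P,\{a^{-1}\})$ where $P=W\setminus\{a\}$; with your choice $P=C$ a component of $Wh_B(x)\setminus\{a\}$ avoiding $a^{-1}$, the second term vanishes and the first is positive, so the length strictly \emph{increases}. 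Concretely, for $x=a^2b$ the Whitehead graph is the path $b^{-1}-a-a^{-1}-b$ with cut-vertex $a$ and $C=\{b^{-1}\}$; your $\psi$ sends $b\mapsto ab$ and $x\mapsto a^3b$. Under the opposite convention the same recipe does give $a^2ba^{-1}\sim ab$ and the argument goes through. So the ``delicate bookkeeping'' you defer is not a routine verification: it decides the sign of the answer, and you must fix the convention, derive the length-change formula honestly (the reduction is by the number of edges joining $a$ to $C$, not by $\deg(a)$), and check it on an example such as this one.
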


\subsection{Distortion of elements in the $\Aut$-norm of free groups.}
\begin{theorem}\label{thm:free.main}
Let $\Bo F$ be a free group and let $x\in\Bo F$.
Then either 
\begin{enumerate}[leftmargin=0.5cm,itemindent=.5cm,labelwidth=\itemindent,labelsep=0cm,align=left,label=\alph*)]
\item $x$ is separable and then the cyclic subgroup generated by $x$ is bounded in the $\Aut$-norm, or
\item $x$ is undistorted in the $\Aut$-norm.  Moreover, there exists a homogeneous quasimorphism 
which is bounded on the set of all primitive elements and is non-trivial on $x$.
\end{enumerate}
\end{theorem}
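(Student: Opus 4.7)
The plan is to separate along the two alternatives, reducing everything to a cyclically reduced representative of minimal $B$-length in the $\Aut$-orbit. This is harmless: inner automorphisms belong to $\Aut(\Bo F)$, so cyclic reduction preserves both the primitive norm and the value of any homogeneous quasimorphism.

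For case (a), I would use a direct Nielsen-style factorization. Fix a decomposition $\Bo F = F_1 \ast F_2$ with both free factors nontrivial, and write $x = g y g^{-1}$ with $y \in F_1$. Pick a basis element $c$ of $F_2$. Because $y^n$ lies in $F_1$, the map sending $c \mapsto y^n c$ and fixing every other element of a chosen basis of $\Bo F$ extends to an automorphism. Hence $y^n c$ is primitive, and the identity $y^n = (y^n c)\cdot c^{-1}$ shows $|y^n|_p \le 2$, hence $|x^n|_p \le 2$ uniformly. Thus $\langle x \rangle$ is bounded.

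For case (b), I would iterate Proposition \ref{P:stal}. Starting from a cyclically reduced $x$, whenever $Wh_B(x)$ has a cut-vertex there is $\phi \in \Aut(\Bo F)$ with $|\phi(x)|_B < |x|_B$; after applying $\phi$ I cyclically reduce again (inner automorphism, no increase in $B$-length) and repeat. Since $|\cdot|_B$ is a positive integer, this terminates at $\psi \in \Aut(\Bo F)$ with $x' := \psi(x)$ cyclically reduced and of minimal $B$-length in its $\Aut$-orbit. Because separability is $\Aut$-invariant, $x'$ is non-separable, so Proposition \ref{P:stal} forces $Wh_B(x')$ to be connected; minimality forbids a cut-vertex. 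Lemma \ref{L:brooks} then supplies a homogeneous quasimorphism $q$ that is bounded on primitive elements with $q(x') \neq 0$, and the pull-back $q \circ \psi$ does the job for $x$: it is homogeneous, still bounded on primitive elements since $\psi$ permutes them, and $q(\psi(x)) = q(x') \neq 0$. Undistortion of $x$ then follows from Lemma \ref{lem:qm.undist}.

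The main subtle point, as I see it, is the bookkeeping in case (b): ensuring that the Whitehead reduction respects the passage to cyclically reduced representatives, and verifying that the class of non-separable elements really is preserved at every step. Both are dispatched by the remark preceding Proposition \ref{P:stal} that separability is $\Aut$-invariant together with the fact that cyclic reduction is an inner automorphism. A secondary, purely aesthetic issue is the interaction with $B$-length for non-cyclically-reduced intermediate iterates, which is why I interleave Whitehead moves with cyclic reductions to keep the inductive quantity well-defined. Note that case (a) implicitly assumes $\Bo F$ has rank at least two (otherwise there is no nontrivial free product decomposition), but in rank one every element is a power of a primitive and the theorem is trivial.
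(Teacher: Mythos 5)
Your proposal is correct and follows essentially the same route as the paper: the boundedness in case (a) via the primitivity of $x^n p$ for $p$ a basis element of the complementary free factor, and case (b) by iterating Proposition~\ref{P:stal} to reach a cyclically reduced representative whose Whitehead graph is connected without cut-vertex, then invoking Lemma~\ref{L:brooks} and pulling back the quasimorphism along the connecting automorphism. The only cosmetic differences are your interleaving of cyclic reductions with the Whitehead moves (the paper performs a single cyclic reduction at the end, which is harmless since $Wh_B$ is defined via $r_B$) and the inessential over-claim that the terminal element has minimal $B$-length in its $\Aut$-orbit.
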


\begin{proof}
Let $|\mathord{\cdot}|_p$ be the primitive norm, see Subsection \ref{sub:primitive.norm}. 
Suppose that $x$ is separable. It means that $\Bo F=F_1\ast F_2$, $F_2$ is not trivial and $x$ can be conjugated into $F_1$. 
Since $|\mathord{\cdot}|_p$ is invariant under inner automorphisms, we can assume that $x\in F_1$. 
Let $B_1$ and $B_2$ be some bases of $F_1$ and $F_2$ respectively. Let $p\in B_2$ and $n\in\Bo N$. 
Note that the element $x^np$ is primitive. Indeed, the set $B_1\cup\{x^np\}\cup (B_2\backslash \{p\})$ is a free basis of $\Bo F$. 
Thus $|x^n|_p = |x^npp^{-1}|_p\leq 2$.

Now suppose, that $x$ is not separable. Let $B$ be a basis of $\Bo F$.
We claim that we can find an element $y$ in the  $\Aut(\Bo F)$-orbit of $x$ such that 
$\Wh_B(y)$ is connected and has no cut-vertices.
Note that $\Wh_B(x)$ is connected. Indeed, if it was not connected, 
then by Proposition~\ref{P:stal}, $x$ would be separable. 
Let $|\mathord{\cdot}|_B$ be the word norm defined by $B$.
If $\Wh_B(x)$ has a cut-vertex, then again by Proposition~\ref{P:stal} 
there is an automorphism $\psi \in\Aut(\Bo F)$ such that $|\psi(x)|_B < |x|_B$. 

Now we consider $\Wh_B(\psi(x))$. As before, it is a connected graph. 
If it has a cut-vertex, then we apply Proposition~\ref{P:stal} again and find a new $\psi$
and further reduce the length of the element.  
At the end we get an element $y$ 
with the property that $\Wh_B(y)$ is connected with no cut-vertices. 
Now consider the cyclical reduction $y'=r_B(y)$. By definition we have $\Wh_B(y') = \Wh_B(y)$. 
Lemma~\ref{L:brooks} gives us a homogeneous quasimorphism~$q$ which is
bounded on primitives and is non-trivial on~$y'$.
Since $x$ and $y'$ are in the same $\Aut$-orbit, there exists $\psi\in\Aut(\Bo F)$ such that $\psi(x) = y'$.  
If we define $q'(w) = q(\psi(w))$, then $q'$ is bounded on primitive elements and $q'(x)\neq 0$. 
\end{proof}

\begin{corollary}\label{C:separable}
An element $x$ of $\Bo F$ is separable if and only if  
$\Wh_B(x)$ is not connected or has a cut-vertex for every basis $B$.  
\end{corollary}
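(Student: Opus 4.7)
The plan is to prove the contrapositive of Corollary~\ref{C:separable}: namely that $x$ is \emph{not} separable if and only if there exists a basis $B$ such that $Wh_B(x)$ is connected and has no cut-vertex. Both directions follow from results already established in the section, with Theorem~\ref{thm:free.main} doing most of the work; the technical point is the naturality of the Whitehead graph under change of basis.

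For the $(\Leftarrow)$ direction, suppose $B$ is a basis for which $Wh_B(x)$ is connected and has no cut-vertex. By definition $Wh_B(x) = Wh_B(r_B(x))$, so the cyclically reduced word $x' = r_B(x)$ (which is conjugate to $x$) satisfies the hypothesis of Lemma~\ref{L:brooks}. That lemma produces a homogeneous quasimorphism bounded on primitive elements and nonzero on $x'$, hence (by Lemma~\ref{lem:qm.undist}) $x'$ is undistorted in the $\Aut$-norm. Since the $\Aut$-norm is invariant under inner automorphisms, $x$ itself is undistorted. But part (a) of Theorem~\ref{thm:free.main} says that if $x$ were separable then $\langle x\rangle$ would be bounded in the $\Aut$-norm, contradicting undistortedness. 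Hence $x$ is not separable.

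For the $(\Rightarrow)$ direction, suppose $x$ is not separable. Fix any basis $B$. The proof of Theorem~\ref{thm:free.main} produces an automorphism $\psi \in \Aut(\Bo F)$ such that $y = \psi(x)$ is cyclically reduced with respect to $B$ and $Wh_B(y)$ is connected with no cut-vertex (we ran Proposition~\ref{P:stal} iteratively until we could not reduce the $B$-length any further). Now consider the basis $B' = \psi^{-1}(B)$. The Whitehead graph construction is natural under change of basis: if we rename each label $b \in B$ by $\psi^{-1}(b) \in B'$, the edges of $Wh_B(y)$ become precisely the edges of $Wh_{B'}(x)$ coming from cyclically consecutive letters of the reduced expression of $x$ in $B'$. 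Thus $Wh_{B'}(x) \cong Wh_B(y)$ as unlabeled graphs, and in particular $Wh_{B'}(x)$ is connected with no cut-vertex.

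The main obstacle I expect is the last naturality step: carefully verifying that rewriting $y = \psi(x)$ in the basis $B$ corresponds, letter-for-letter, to the reduced expression of $x$ in the basis $B' = \psi^{-1}(B)$, and hence that the two Whitehead graphs are isomorphic. Once this bookkeeping is accepted, everything else is an immediate consequence of Theorem~\ref{thm:free.main} and Lemma~\ref{L:brooks}.
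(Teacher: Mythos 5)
Your proof is correct and follows essentially the same route as the paper: the forward implication via Lemma~\ref{L:brooks} together with Theorem~\ref{thm:free.main}, and the reverse implication via the iterated application of Proposition~\ref{P:stal}. The only cosmetic difference is that you phrase the second direction contrapositively and spell out the change-of-basis naturality $Wh_B(\psi(x))\cong Wh_{\psi^{-1}(B)}(x)$, which the paper's argument uses implicitly.
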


\begin{proof}
Assume that $x$ is separable. If there is a basis $B$ such that $\Wh_B(x)$ is connected and has no cut-vertices,
then by applying Lemma~\ref{L:brooks} for an element $r_B(x)$, 
we see that $x$ is undistorted in the $\Aut$-norm. Thus $x$ is not separable by Theorem~\ref{thm:free.main}.

To prove the reversed implication, we apply inductively Proposition~\ref{P:stal} 
and obtain an element $y$ in the same $\Aut(\Bo F)$-orbit as $x$, such that $\Wh_B(y)$ is not connected. 
Thus again by Proposition~\ref{P:stal}, an element $y$, and consequently $x$, is separable. 
\end{proof}

\begin{remark}
Corollary~\ref{C:separable} is not entirely new. It can be deduced from Theorem~2.4 and Proposition~2.3 in \cite{stallings}.
However, we think that our proof of this fact is interesting since it is simpler and shorter than the proof of Stallings.
\end{remark}

\section{$\Aut$-norm on surface groups.}

In this section we study distortion in surface groups. Our main tool is the theory of mapping class groups.
The principal idea is to embed a surface group in its automorphism group (which is a mapping class group of a surface) 
via the Birman embedding, and then find appropriate quasimorphisms on this group.
In Subsection \ref{sub:nielsen.thurston} we recall the Nielsen-Thurston normal form of a mapping class.
Then in Subsection \ref{sub:gen.kra} we give the Nielsen-Thurston decomposition of mapping classes which lie in the image of the Birman embedding. 
Finally, in Subsection \ref{sub:surface.main} we use the quasimorphisms defined by Bestvina-Bromberg-Fujiwara to prove Theorem \ref{thm:T1} for surface groups. 

\subsection{Nielsen-Thurston normal form.}\label{sub:nielsen.thurston} 
Let $S$ be a compact oriented surface with finitely many punctures in the interior of $S$.
By $\MCG(S)$ we denote the mapping class group of $S$, that is the group of isotopy classes of orientation preserving
homeomorphisms of $S$. We assume that homeomorphisms and isotopies fix the boundary of $S$ pointwise.

We recall briefly the Nielsen-Thurston normal form of an element in $\MCG(S)$. 
A loop $\gamma$ in $S$ is called \textbf{essential}, if no component of $S \backslash \gamma$ is homeomorphic to a disc, 
a punctured disc or an annulus.   
An element $g\in\MCG(S)$ is called \textbf{reducible} if there exists a non-empty set $C = \{c_1,\ldots,c_n\}$ of isotopy classes
of essential simple loops in $S$ such that: 
\begin{enumerate}
\item All elements in $C$ can be represented by pairwise disjoint simple loops.
\item The set $C$ is $g$-invariant. 
\end{enumerate}

Such $C$ is called a \textbf{reduction system} for $g$. 
A reduction system for $g$ is maximal if it is not a proper subset of any other reduction system for $g$. 
There may be many maximal reduction systems. However, we can define the unique one by defining the 
\textbf{canonical reduction system} to be the intersection of all maximal reduction systems. 
Note that the canonical reduction system is not necessary maximal.

Now let us describe the canonical form of an element of a mapping class group. 
Assume for a moment, that $S$ has no boundary.
Let $g\in\MCG(S)$ and let $C=\{c_1,\ldots,c_m\}$ be its canonical reduction system.
Choose pairwise disjoint representatives of the classes $c_i$ together with pairwise disjoint closed annuli $R_1,\ldots,R_m$, 
where $R_i$ is a closed neighborhood of a representative of $c_i$.
Let $S_1,\ldots,S_p$ be the closures of connected components of $S \backslash \bigcup_{i=1}^m R_i$.
Then there is a power $k$ and a representative $\psi\in\operatorname{Homeo}^+(S)$ of $g^k$ such that: 
\begin{enumerate}
\item The homeomorphism $\psi$ fixes the subsurfaces, i.e., $\psi(R_i)=R_i$ for $ 1\leq i\leq m$ and $\psi(S_i)=S_i$ for $1\leq i\leq p$.
\item The restriction of $\psi$ to $R_i$ is a power of a Dehn twist.
\item The restriction of $\psi$ to $S_i$ is pseudo-Anosov or the identity.
\end{enumerate}

Thus, up to finite power, any element is described as a commuting product of 
powers of Dehn twists and pseudo-Anosov homeomorphisms on subsurfaces.
If $S$ has a boundary, then we need to add to our collection $R_i$  
collar neighborhoods of boundary curves. 
Then the additional terms which can appear in the decomposition of $g^k$ are powers of Dehn twists along boundary curves. 
A mapping class which has only one factor in the Nielsen-Thurston decomposition is called \textbf{pure}. 
We have the following characterization of the canonical reduction system.  

\begin{proposition}\label{prop:crs.char}
Assume that the surface $S$ has no boundary and possibly has punctures.  
The system $C=\{c_1,\ldots,c_s\}$ is the canonical reduction system for $g$ if and only if the following two conditions hold: 
\begin{enumerate}[itemsep=-0.2cm,leftmargin=0.5cm,itemindent=.5cm,labelwidth=\itemindent,labelsep=0cm,align=left,label=\alph*)]
\item There exists $k \in\Bo N$ and pairwise disjoint loops $\gamma_i$ representing classes $c_i$ such that 
$g^k$ restricted to any component of $S \backslash (\gamma_1 \cup \ldots \cup \gamma_s)$ 
is trivial or pseudo-Anosov. 
\item The set $C$ is a minimal set with this property.
\end{enumerate}
\end{proposition}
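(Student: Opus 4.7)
The plan is to prove both implications by combining the Nielsen--Thurston normal form recalled above with the following standard fact: if $\phi$ is a pure mapping class with a \emph{minimal} adequate reduction system $C_0$ realized by pairwise disjoint curves, then any essential simple closed curve fixed by $\phi$ up to isotopy either lies in $C_0$ or is disjoint from $C_0$ and contained in a component on which $\phi$ restricts to the identity. (A curve crossing a reduction annulus with nonzero Dehn twist is displaced, and pseudo-Anosov pieces carry no periodic essential simple classes.)

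For the forward direction, assume $C$ is the canonical reduction system. Part~(a) is immediate from the Nielsen--Thurston description: the power $g^k$ and representative $\psi$ provided there fix each $\gamma_i$, act as powers of Dehn twists on annular neighborhoods $R_i$, and act trivially or pseudo-Anosov on each component of $S\setminus\bigcup R_i$; cutting along the $\gamma_i$ instead of the $R_i$ merely removes annular collars, so the restrictions are unchanged. For part~(b), suppose $C'\subsetneq C$ also satisfies~(a), with disjoint representatives $\gamma'_i$ and power $k'$; passing to a minimal such $C'$, we may assume all twists along the $\gamma'_i$'s are nonzero. Pick $c\in C\setminus C'$. Since $c$ is $g$-periodic (being in the canonical reduction system) and $g^{k'}$ is pure along $C'$, the fact above forces $c$ to be disjoint from the $\gamma'_i$'s and to lie in a component $T$ on which $g^{k'}$ acts as the identity. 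Then $g|_T$ has finite order and $T$ has positive complexity (as $c$ is an essential interior curve), so one can build a $g$-invariant maximal system of disjoint essential simple closed curves on the $g$-orbit of $T$ that avoids $c$. Extending across the remaining identity components yields a maximal reduction system $M\supseteq C'$ for $g$ that does not contain $c$, contradicting $c$ lying in the intersection of all maximal reduction systems.

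For the backward direction, assume $C$ satisfies~(a) and~(b). Minimality~(b) ensures the fact above applies with $C_0=C$, and the same argument—now with $C$ in place of $C'$—shows that any $c$ in the canonical reduction system must lie in $C$. Conversely, by the forward direction the canonical reduction system itself satisfies~(a), so~(b) prohibits it from being a strict subset of $C$. Combining, $C$ equals the canonical reduction system.

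The main obstacle is the equivariant step in the trivial-restriction case: producing, on the $g$-orbit of $T$, a $g$-invariant maximal system of disjoint essential simple closed curves missing the prescribed curve $c$. The crux is that $(g|_T)^{k'}=\mathrm{id}$, so $g|_T$ is of finite order and acts isometrically on some hyperbolic metric on $T$; a standard equivariant pants decomposition argument (e.g.\ completing a $g|_T$-invariant multicurve disjoint from the orbit of $c$ via an equivariant systole) then supplies the desired system, using that $T$ has positive complexity.
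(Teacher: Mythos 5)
First, a remark on the comparison you asked for implicitly: the paper does not prove Proposition~\ref{prop:crs.char} at all. It is stated as recalled background on canonical reduction systems (essentially the Birman--Lubotzky--McCarthy/Handel--Thurston description), and the only commentary the paper offers is the observation that Dehn twists disappear after cutting along the $\gamma_i$. So you are supplying a proof where the paper supplies none; your argument therefore has to stand entirely on its own, and your forward-direction part~(a) does match the paper's implicit reasoning (twists in boundary collars become trivial after cutting).

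There are, however, two genuine problems in your argument. The smaller one: ``passing to a minimal such $C'$, we may assume all twists along the $\gamma_i'$'s are nonzero'' is false. Minimality only excludes a zero-twist curve with identity pieces on \emph{both} sides; a curve of a minimal adequate system separating a pseudo-Anosov piece from an identity piece may carry zero twisting, and your justification of the ``standard fact'' does not cover a fixed curve crossing such a $\gamma_i'$ (this is repairable -- such a curve enters a pseudo-Anosov piece essentially and is displaced -- but it must be said). The serious gap is the step you yourself flag as the crux. You want a \emph{maximal} reduction system $M$ with $c\notin M$, and you propose to obtain it by completing a $g$-invariant multicurve \emph{disjoint from the orbit of} $c$. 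That cannot work as described: since $c$ lies in the canonical reduction system, its $g$-orbit is itself a disjoint $g$-invariant multicurve, so any reduction system disjoint from that orbit and not containing it can be enlarged by adjoining it -- a maximal reduction system disjoint from $c$ necessarily contains $c$ up to isotopy. What you actually need is a reducing class $d$ inside the identity piece $T$ with $i(c,d)>0$; then every maximal reduction system containing the orbit of $d$ omits $c$. Producing such a $d$ equivariantly -- a curve crossing $c$ whose full $g$-orbit is a disjoint multicurve -- is the real content of the proposition: it requires realizing the finite-order return map on $T$ as an isometry, passing to the quotient orbifold, and checking all low-complexity cases, and the ``equivariant systole'' you invoke gives no control whatsoever over intersection with $c$. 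Until this is carried out, both the minimality claim in your forward direction and the exclusion of identity components in your backward direction are unproven.
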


Dehn twists are not mentioned in Proposition~\ref{prop:crs.char} since every Dehn twist along some $\gamma_i$ 
becomes trivial in the mapping class group of $S \backslash (\gamma_1 \cup \ldots \cup \gamma_s)$. 

\subsection{Filling curves and the theorem of Kra.}\label{sub:gen.kra}
Let $S$ be a closed oriented surface of genus $g$, possibly with punctures, and let $\star\in S$.
Assume that $S$ has negative Euler characteristic. We consider the the Birman exact sequence:
$$
1 \to \pi_1(S,\star) \xrightarrow{\Push} \MCG(S,\star) \xrightarrow{F} \MCG(S) \to 1.
$$
By $\MCG(S,\star)$ we mean the group of homeomorphisms which fix~$\star$, 
taken up to isotopies which fix~$\star$ at any time. 
Since fixing a point and removing a point does not make any difference for mapping classes, 
we have that $\MCG(S,\star)=\MCG(S\backslash\star)$.
The map $F$ is the forgetful map. The map $\Push$ is defined as follows:
let $\gamma$ be a based loop which represents an element in $\pi_1(S,\star)$. 
Let $\psi$ be any homeomorphism which fixes $\star$, such that: 
there exists an isotopy $H \colon [0,1] \times S \to S$ such that
$H(0,\mathord{\cdot})=\OP{id}_S, H(1,\mathord{\cdot})=\psi$ and $H(t,\star) = \gamma(t)$. 
Then $\Push([\gamma])=[\psi]$.
This is a well-defined map. One can imagine that $\Push([\gamma])$ takes $\star$ and pushes it along the loop $\gamma$. 
For a detailed discussion see \cite{fm}. 

The goal of this subsection is to understand the Nielsen-Thurston 
decomposition of $\Push([\gamma])$. In Theorem \ref{thm:gen.kra} we generalize a theorem of I.~Kra \cite{kra}, for the short proof see \cite{fm}. 
It states that if $\gamma$ is filling (see Definition \ref{def:filling}), then $\Push([\gamma])$ is pseudo-Anosov. 

\begin{example}\label{e:push.simple}
Let $\gamma$ be a simple loop based at $\star$. 
We identify a tubular neighborhood of $\gamma$ with the annulus $\Bo S^1 \times [-1,1]$.
Let $\gamma^{\pm} = S^1 \times \{\pm1\}$. Then $\Push([\gamma]) = T_{\gamma^+}T_{\gamma^-}^{-1}$,
where $T_{\gamma^{\pm}}$ is the Dehn twist along $\gamma^{\pm}$. 
The surface $S$ is not a torus. Hence $\gamma^-$ is not homotopic to $\gamma^+$ in 
$S\backslash\{\star\}$, and $T_{\gamma^+}$ and $T_{\gamma^-}$ are
different elements of $\MCG(S,\star)$. 
\end{example}

In what follows, a subsurface $S'$ of $S$ always assumed to be closed in $S$, 
and the boundary of $S'$ is a union of pairwise disjoint simple loops.
A loop is called \textbf{primitive} if, as an element of the fundamental group it is not a proper power of any other element. 
We always assume, that a loop in $S$ is in general position, i.e., it is a smooth immersed loop with
only double self-intersections. A loop is in a \textbf{minimal position} if it has the minimal number of double points. 

\begin{definition}
Assume that $S$ is a closed oriented surface, possibly with punctures. 
Let $\gamma \colon \Bo S^1 \to S$ be a loop in a minimal position.
We define a \textbf{subsurface $S_{\gamma}$} in the following way:
First we consider a small collar neighborhood 
$$N \colon \Bo S^1 \times [-1,1] \to S,$$ 
where $N$ is a smooth immersion and $N(\cdot,0) = \gamma(\cdot)$, 
such that the image of $N$ retracts onto the image of $\gamma$.
Then we add to the image of $N$ all the components 
of $S \backslash \im(N)$ which are disks or punctured disks.
\end{definition}

\begin{lemma}\label{lem:essential}
Assume that $S$ is a closed oriented surface, possibly with punctures. 
Let $\gamma$ be a loop in a minimal position such that it is not homotopic to a power of a simple loop. 
Then the boundary components of $S_{\gamma}$ are essential simple loops in $S$. 
\end{lemma}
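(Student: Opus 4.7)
The approach is proof by contradiction, leveraging the construction of $S_\gamma$ together with the hypotheses that $\gamma$ is in minimal position and not homotopic to a power of a simple loop.

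The ``simple'' part is immediate: up to isotopy, $S_\gamma$ is the union of a regular neighborhood $N(\gamma)$ of $\gamma$ (viewed as a $4$-valent graph, smoothed at its self-intersection points) with the adjoined disc and punctured-disc complementary faces, and the boundary of any such subsurface is a disjoint union of simple closed curves. So I focus on essentiality.

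Suppose for contradiction that some boundary component $c$ of $S_\gamma$ is inessential. Since $S$ is closed orientable of negative Euler characteristic (possibly with punctures), $c$ must bound either a disc $D$ or a punctured disc $D'$ in $S$: an annulus complementary component would identify the two boundary circles of a closed annulus with the single loop $c$, forcing $S$ to be a torus. Since $c\subset\partial N(\gamma)$ sits at positive distance from $\gamma$, we have $c\cap\gamma=\emptyset$, so by connectedness $\gamma$ lies entirely on one side of $c$. This yields two subcases. If $\gamma$ lies on the disc (respectively punctured-disc) side of $c$, then in the disc case the minimal position hypothesis forces a null-homotopic $\gamma$ to have zero self-intersections (since the minimum number of self-intersections in a null-homotopic free homotopy class is attained by a small embedded circle), and so $\gamma$ is itself a simple loop; in the punctured-disc case, $\pi_1(D')\cong\Bo Z$ is generated by a simple peripheral loop around the puncture, and so $\gamma$ is a power of it. Either way, $\gamma$ is homotopic to a power of a simple loop, contradicting the hypothesis. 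If instead $\gamma$ lies on the opposite side of $c$, then for a sufficiently thin collar $\Gamma$ we have $N(\gamma)\cap D^{\circ}=\emptyset$ (respectively $N(\gamma)\cap D'^{\circ}=\emptyset$), so that interior is itself a connected component of $S\setminus N(\gamma)$, i.e.\ a disc (respectively punctured-disc) face; by the construction of $S_\gamma$ this face is adjoined, placing $c$ in the interior of $S_\gamma$ and contradicting $c\in\partial S_\gamma$.

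The main subtlety I anticipate is the bookkeeping needed on the ``unbounded side'' branch: for an unluckily chosen collar $\Gamma$, the neighborhood $N(\gamma)$ could in principle intrude into $D^{\circ}$ even though $\gamma\cap D=\emptyset$. This is harmless because the face structure of $S\setminus N(\gamma)$, and hence $\partial S_\gamma$ up to isotopy, is independent of the choice of thin collar, so one may simply assume $\Gamma$ is taken thin enough that $N(\gamma)\cap D^{\circ}=\emptyset$; once this is arranged, the face-decomposition argument closes cleanly.
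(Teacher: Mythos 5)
Your proof is correct and follows essentially the same route as the paper's: assume a boundary component is inessential, hence bounds a disc or punctured disc $D$; if $\gamma$ lies outside $D$ then $D$ would have been adjoined to $S_{\gamma}$ by construction, while if $\gamma$ lies inside $D$ then $\gamma$ is homotopic to a power of a simple loop, a contradiction either way. You are somewhat more careful than the paper about excluding the annulus case and about taking the collar thin enough, but the underlying argument is identical.
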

\begin{proof}
Assume that one boundary component $\partial_0 S_{\gamma}$ is not essential in $S$.
Then $\partial_0 S_{\gamma}$ bounds a disc or a punctured disc $D$. 
Thus $S\setminus\partial_0 S_{\gamma}$ has two connected components: $D$ and $S\setminus D$.
Hence $\gamma$ is contained either in $D$ or in $S\setminus D$.
By construction, it is impossible that $\gamma\subset S\setminus D$, because then we would add $D$ to $S_{\gamma}$. 
Thus $\gamma \subset D$. But in $D$ every loop is homotopic to a power of a simple loop, and we get a contradiction. 
\end{proof}

\begin{lemma}\label{lem:annulus}
Assume that $S$ is a closed oriented surface, possibly with punctures. 
Let $\gamma$ be a non-trivial loop in a minimal position such that is not homotopic to a power of a simple loop. 
Let $\partial_1 S_{\gamma}$ and $\partial_2 S_{\gamma}$ be two homotopic components of $\partial S_{\gamma}$.
Then they bound an annulus lying outside of the interior of $S_{\gamma}$.
Moreover, they are not homotopic to any other component of $\partial S_{\gamma}$.
\end{lemma}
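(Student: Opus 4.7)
The plan is to reduce both assertions to a single structural dichotomy for annuli cobounded by pairs of components of $\partial S_\gamma$. Call such an annulus $A \subset S$ \emph{basic} if $\operatorname{int}(A)$ is disjoint from $\partial S_\gamma$. For any basic annulus, since $\operatorname{int}(A)$ is connected, open in $S$, and disjoint from $\partial S_\gamma$, it is contained in one of the two open sets $\operatorname{int}(S_\gamma)$, $\operatorname{int}(S\setminus S_\gamma)$; hence $A$ lies entirely in $S_\gamma$ or entirely in $\overline{S \setminus S_\gamma}$. Moreover, a basic annulus cannot lie in $S_\gamma$: if $A \subset S_\gamma$, then $A$ is closed in $S_\gamma$ (compactness) and also open in $S_\gamma$ by a local half-disk argument at each $p \in \partial A \subset \partial S_\gamma$ (the half-disk neighborhoods of $p$ in $A$ and in $S_\gamma$ lie on the same interior side of the common boundary arc through $p$, hence coincide locally), so $A$ is a component of the connected surface $S_\gamma$; thus $S_\gamma = A$, forcing $\gamma$ to be a power of a simple loop, contrary to hypothesis.

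With this tool, I first prove the ``moreover'' statement by contradiction. Assume some third component $\partial_3$ of $\partial S_\gamma$ is homotopic to $\partial_1, \partial_2$. By Lemma~\ref{lem:essential} all such components are essential, and being pairwise disjoint they are mutually parallel in $S$; together with any further homotopic boundary components they form an ordered family $\partial_{i_1}, \dots, \partial_{i_k}$ with $k \geq 3$, and consecutive pairs cobound annuli $A_j \subset S$. Each $A_j$ is basic: any other component of $\partial S_\gamma$ inside $\operatorname{int}(A_j)$ would have to be parallel to $\partial_{i_j}$ by Lemma~\ref{lem:essential}, contradicting consecutiveness. Now at an interior curve $\partial_{i_j}$ with $1 < j < k$, the two local sides of $\partial_{i_j}$ in $S$ are occupied by $A_{j-1}$ and $A_j$; since $\partial_{i_j}$ lies on $\partial S_\gamma$, exactly one of those local sides lies in $\operatorname{int}(S_\gamma)$, so exactly one of the basic annuli $A_{j-1}, A_j$ is contained in $S_\gamma$, contradicting the dichotomy.

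The first assertion then follows immediately: the unique annulus $A$ cobounded in $S$ by the homotopic pair $\partial_1, \partial_2$ contains no other component of $\partial S_\gamma$ in its interior (by what was just proved), hence is basic, and the dichotomy places it in $\overline{S \setminus S_\gamma}$, which is exactly what is asserted. The main delicate point is the local-to-global argument showing that a basic annulus lying in $S_\gamma$ is open in $S_\gamma$; this is elementary once one checks that $A$ and $S_\gamma$ occupy the same side of the common boundary arc near any $p \in \partial A$. The remaining ingredients (uniqueness of the cobounding annulus of a pair of disjoint homotopic essential simple closed curves on a hyperbolic surface, and the parallelism of homotopic disjoint essential curves) are standard.
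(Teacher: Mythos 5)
Your argument is correct; the engine driving it is the same one the paper uses (an annulus cobounded by components of $\partial S_\gamma$ cannot contain $\gamma$, since every loop in an annulus is a power of a simple loop, so its interior must avoid $\operatorname{int}(S_\gamma)$), but the execution differs in both halves. For the first assertion the paper argues more directly: $\operatorname{int}(A)$ is a connected component of $S\setminus(\partial_1 S_\gamma\cup\partial_2 S_\gamma)$ not containing $\gamma$, while $\operatorname{int}(S_\gamma)$ is connected, contains $\gamma$, and misses $\partial_1 S_\gamma\cup\partial_2 S_\gamma$; hence the two are disjoint. This bypasses your clopen/half-disk argument and does not require knowing beforehand that $\operatorname{int}(A)$ avoids the remaining boundary components, so the paper can prove the first claim before, and independently of, the ``moreover.'' For the ``moreover'' the paper pushes $\partial_1 S_\gamma$ off to a parallel copy $\partial_1'$ inside $\operatorname{int}(A)$, takes the annulus $A'$ between $\partial_1'$ and the hypothetical third component $\partial_i S_\gamma$, and notes that $A'$ would have to contain $\partial_1 S_\gamma$ or $\partial_2 S_\gamma$ in its interior, contradicting the first part applied to $A'$. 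You instead linearly order the full parallel family and count sides at an interior curve; this is a clean and valid alternative, though it leans on the standard (and here unproved, but no less standard than the facts the paper itself invokes) structure theory of families of disjoint parallel essential curves --- that they can be ordered so that consecutive pairs cobound annuli sitting on opposite sides of each interior curve, and that the cobounding annulus of a pair is unique once the torus is excluded. The paper's route is shorter because it never needs the ``a basic annulus inside $S_\gamma$ forces $S_\gamma=A$'' step; yours yields that slightly stronger intermediate statement and makes the combinatorics of the parallel family explicit.
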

\begin{proof}
Any two homotopic simple loops bound an annulus in $S$. 
Let $A$ be an embedded annulus bounded by $\partial_1 S_{\gamma}$ and $\partial_2 S_{\gamma}$. 
Since $A$ is a connected component of $S \backslash (\partial_1 S_{\gamma} \cup \partial_2 S_{\gamma})$, 
the loop $\gamma$ is either outside $A$ or inside $A$. 
The loop $\gamma$ cannot be inside $A$, since every loop in $A$ is a power 
of a simple loop and this is excluded by the assumption.

Now assume that some boundary component $\partial_i S_{\gamma}$ of $S_{\gamma}$ is homotopic to $\partial_1 S_{\gamma}$.
Let $\partial_1^{'} S_{\gamma}$ denote a loop in the interior of $A$ homotopic to $\partial_1 S_{\gamma}$.
Let $A'$ be an embedded annulus bounding $\partial_1^{'} S_{\gamma}$ and $\partial_i S_{\gamma}$.
If $\partial_i S_{\gamma}$ is not $\partial_1 S_{\gamma}$ or $\partial_2 S_{\gamma}$, then
$\partial_1 S_{\gamma}$ or $\partial_2 S_{\gamma}$ are contained in the interior of $A'$.
This is impossible, since we already know that the interior of $A'$ is disjoint from $S_{\gamma}$.
Thus $\partial_i S_{\gamma}$ equals to $\partial_1 S_{\gamma}$ or $\partial_2 S_{\gamma}$.
\end{proof}

In the following definitions and Lemma~\ref{lem:hass.scott}, 
$S$ is an orientable surface, possibly with boundary and punctures.

\begin{definition}\label{def:filling}
Let $\gamma$ be a loop on $S$. 
We say that $\gamma$ \textbf{fills} $S$, if every essential simple loop on $S$ 
has a non-trivial intersection with every curve homotopic to~$\gamma$.  
\end{definition}

\begin{definition} 
A loop $\gamma\colon\Bo S^1 \to S$ has an embedded $1$-gon if there is a closed arc $\alpha \subset \Bo S^1$, such that 
$\gamma$ restricted to the interior of $\alpha$ is an embedding and $\gamma(\partial\alpha)$ is a single point.
A loop $\gamma\colon \Bo S^1\to S$ has an embedded $2$-gon if there are two disjoint closed arcs $\alpha$, $\beta$ such that 
$\gamma(\partial\alpha)=\gamma(\partial\beta)$ and $\gamma$ restricted to $\alpha$ and $\beta$ is an embedding. 
\end{definition}

We need the following lemma.

\begin{lemma}[Lemma 2.8, \cite{hass.scott}]\label{lem:hass.scott}
Assume that $\gamma$ has no embedded $1$-gons and $2$-gons. 
Let $c$ be an essential simple loop and assume that $c$ is disjoint from some curve homotopic to $\gamma$. 
Then there is a simple loop $c'$ isotopic to $c$ such that $\gamma$ is disjoint from $c'$.   
\end{lemma}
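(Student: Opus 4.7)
The plan is to argue by contradiction. Assume $\gamma$ and $c$ have been placed in general position but $|c \cap \gamma| > 0$; I will extract from this situation an embedded $1$-gon or $2$-gon of $\gamma$, contradicting the hypothesis. The degenerate cases (where $S$ is an annulus or a torus, or where $\gamma$ is null-homotopic) are straightforward to treat separately, so assume $S$ has negative Euler characteristic.

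The main tool is a lift to the universal cover $\pi\colon \tilde S \to S$. I pick a lift $\tilde c$ of $c$ meeting a lift $\tilde\gamma$ of $\gamma$ at some point $p$. The given homotopy in $S$ from $\gamma$ to the disjoint-from-$c$ representative $\gamma'$ lifts to a proper homotopy in $\tilde S$ from $\tilde\gamma$ to a lift $\tilde\gamma'$ of $\gamma'$ with the same two endpoints on $\partial_\infty \tilde S$. Since $c \cap \gamma' = \emptyset$ in $S$, any two of their lifts are disjoint, so in particular $\tilde c \cap \tilde\gamma' = \emptyset$. The arcs $\tilde\gamma$ and $\tilde\gamma'$ cobound a topological strip $\Sigma \subset \tilde S$; the arc $\tilde c$ enters $\Sigma$ at $p$ and, being barred from $\tilde\gamma'$, must exit $\Sigma$ through $\tilde\gamma$ at some point $q$. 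The subarc of $\tilde c$ from $p$ to $q$ together with the subarc of $\tilde\gamma$ from $q$ to $p$ cobounds a compact disk $D \subset \tilde S$.

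The remaining work is an innermost-disk analysis of $D$. If the $\tilde\gamma$-subarc of $\partial D$ is embedded in $\tilde S$ and no other lift of $\gamma$ meets the interior of $D$, then $\pi(D)$ is an embedded bigon between $c$ and $\gamma$ in $S$; one can then isotope $c$ across $\pi(D)$ to strictly decrease $|c \cap \gamma|$ and restart the whole argument with the new $c$. Otherwise, $D$ contains either a self-crossing of $\tilde\gamma$ or a crossing between $\tilde\gamma$ and some other $\tilde\gamma$-translate. Taking an innermost subdisk $D' \subset D$ whose boundary lies entirely on lifts of $\gamma$ produces, on projecting by $\pi$, either an embedded $1$-gon of $\gamma$ (if $\partial D'$ lies on a single lift making a self-intersection) or an embedded $2$-gon of $\gamma$ (if $\partial D'$ lies on two distinct lifts meeting transversely). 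Either outcome contradicts the hypothesis.

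The hard part is making this innermost-disk bookkeeping rigorous and, crucially, guaranteeing termination of the outer loop: one must argue that the "isotope across a $c$-$\gamma$ bigon and restart" procedure cannot run forever without ever exposing a self-bigon or self-monogon of $\gamma$. Since each iteration strictly reduces the nonnegative integer $|c \cap \gamma|$, the loop must halt; if it halts with $|c \cap \gamma| = 0$ we have the desired $c'$, and if it halts because no innermost $c$-$\gamma$ bigon can be extracted from $D$, then the alternative branch of the innermost-disk analysis forces the contradictory self-$1$-gon or self-$2$-gon of $\gamma$.
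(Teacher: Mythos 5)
The paper offers no proof of this lemma: it is imported verbatim from Hass--Scott (their Lemma 2.8), so the only comparison available is with the standard argument in the literature, which is exactly the route you take --- pass to the universal cover, produce a disk bounded by a subarc of $\tilde c$ and a subarc of a lift of $\gamma$, and run an innermost-disk analysis that either yields a $c$--$\gamma$ bigon (isotope across it, reduce $|c\cap\gamma|$, repeat) or a $1$-gon or $2$-gon of $\gamma$ (contradiction). The architecture and the termination argument are right.

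However, one step is wrong as stated and two of the steps you defer are not mere bookkeeping. The strip $\Sigma$ ``cobounded by $\tilde\gamma$ and $\tilde\gamma'$'' need not exist: $\gamma$ is not simple, and a single lift $\tilde\gamma$ can itself have self-intersections, so $\tilde\gamma\cup\tilde\gamma'$ need not bound anything like a strip, and ``$\tilde c$ enters $\Sigma$ at $p$ and must exit through $\tilde\gamma$'' is unjustified. The fix is to run the separation argument through the embedded curve instead: $\tilde c$ is a properly embedded line separating $\tilde S$; the disjoint curve $\tilde\gamma'$ lies in one complementary half-plane, so both ends of $\tilde\gamma$ (which shares its endpoints at infinity with $\tilde\gamma'$) lie on one side of $\tilde c$; hence $\tilde\gamma$ crosses $\tilde c$ a positive even number of times, and an innermost such configuration yields the disk $D$ bounded by a subarc of $\tilde\gamma$ and a subarc of $\tilde c$. (You must also dispose of the degenerate case where $\tilde\gamma$ and $\tilde c$ share endpoints at infinity, i.e.\ $\gamma$ is freely homotopic to a power of $c$; there one pushes $c$ off an annular neighborhood directly.) Second, in the innermost-disk analysis you account only for other lifts of $\gamma$ meeting $D$: other lifts of $c$ can enter $D$ through its $\tilde\gamma$-side and must be included in the recursion, and you must check that the innermost disk projects injectively to $S$ (innermostness plus freeness of the deck action), which is precisely where ``embedded'' in the conclusion is earned. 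None of this is fatal --- it is the actual content of the Hass--Scott proof --- but as written the proposal postpones exactly the parts that carry the mathematical weight.
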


The next two lemmas deal with the surface $S_{\gamma}$. 
Since we defined $S_{\gamma}$ only for surfaces with no boundary,
we assume below that $\partial S = \emptyset$.

\begin{lemma}\label{L:filling}
Let $\gamma$ be a loop in a minimal position on a closed orientable surface $S$, possibly with punctures. 
Then $\gamma$ fills~$S_{\gamma}$.
\end{lemma}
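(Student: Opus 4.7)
The plan is to proceed by contradiction: suppose there is an essential simple loop $c \subset S_\gamma$ and a loop $\gamma'$ freely homotopic to $\gamma$ in $S_\gamma$ with $c \cap \gamma' = \emptyset$. Since $\gamma$ is in minimal position in $S$, it has no embedded $1$-gons or $2$-gons in $S$ and therefore none in the subsurface $S_\gamma$. Lemma \ref{lem:hass.scott}, applied inside $S_\gamma$, then produces a simple loop $c'$ isotopic to $c$ with $c' \cap \gamma = \emptyset$. It suffices to show that any simple loop in $S_\gamma \setminus \gamma$ is inessential in $S_\gamma$, which will contradict the essentiality of $c$.

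The heart of the argument is a classification of the connected components of $S_\gamma \setminus \gamma$. Regarding $\gamma$ as the $4$-valent graph that is a spine of the closed regular neighborhood $N(\gamma) = \operatorname{im}(\Gamma)$, standard ribbon-graph reasoning shows that $N(\gamma) \setminus \gamma$ deformation retracts onto $\partial N(\gamma)$; hence its components are in bijection with the components of $\partial N(\gamma)$, each being a half-open annulus $A_\beta \cong \Bo S^1 \times [0,1)$ with $\beta \subset \partial N(\gamma)$ corresponding to $\Bo S^1 \times \{0\}$. Each such $\beta$ bounds a unique component $V_\beta$ of $S \setminus N(\gamma)$, and by construction $V_\beta$ is adjoined to $S_\gamma$ precisely when $V_\beta$ is a disc or a punctured disc; in that case $V_\beta$ has exactly one boundary circle, namely $\beta$, so it is glued to a single $A_\beta$. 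Consequently every component of $S_\gamma \setminus \gamma$ has one of three forms: (i) $A_\beta \cup V_\beta$ homeomorphic to an open disc, when $V_\beta$ is a disc; (ii) $A_\beta \cup V_\beta$ homeomorphic to a once-punctured open disc, when $V_\beta$ is a punctured disc; (iii) $A_\beta$ alone, in which case $V_\beta$ is complex and $\beta$ is a component of $\partial S_\gamma$.

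A simple loop $c'$ lying in a component of type (i), (ii), or (iii) is respectively null-homotopic, null-homotopic or peripheral to the puncture, or null-homotopic or parallel to $\beta \subset \partial S_\gamma$. In each subcase the complement $S_\gamma \setminus c'$ contains a component that is a disc, a punctured disc, or an annulus, so by the definition of essentiality used in the paper $c'$ is inessential in $S_\gamma$, contradicting the essentiality of $c$. The main technical step I anticipate is the topological classification of the components of $S_\gamma \setminus \gamma$ in the middle paragraph: one must verify that the disc- and punctured-disc complementary regions of $N(\gamma)$ each meet $\partial N(\gamma)$ in a single circle, so they glue to exactly one half-open annulus of $N(\gamma) \setminus \gamma$ and no further possibilities arise. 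Once that is in place the essentiality analysis is immediate from the definitions.
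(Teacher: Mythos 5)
Your proof is correct and follows essentially the same route as the paper's: reduce via Lemma~\ref{lem:hass.scott} to a simple loop disjoint from $\gamma$ itself, then observe from the construction of $S_{\gamma}$ that every component of $S_{\gamma}\setminus\gamma$ is too simple to contain an essential loop. The only difference is that you spell out the component classification more carefully than the paper (which asserts only ``disc or punctured disc,'' omitting the half-open annuli along $\partial S_{\gamma}$); your added type~(iii) case is handled correctly and does not change the argument.
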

\begin{proof}
By the definition of $S_{\gamma}$ we see that every connected component of $S_{\gamma} \backslash \gamma$
is a disc or a punctured disc. It follows that every essential simple loop intersects $\gamma$ non-trivially.
If there is an essential simple loop $c$ disjoint from some curve homotopic to $\gamma$, then by Lemma~\ref{lem:hass.scott}
we get some essential simple loop $c'$ disjoint from $\gamma$, which is a contradiction. 
\end{proof}

\begin{lemma}\label{L:Euler-neg}
Assume that $\gamma$ is in a minimal position and is not homotopic to a power of a simple loop. 
Then the Euler characteristic of $S_{\gamma}$ is negative.
\end{lemma}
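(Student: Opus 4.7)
We argue by contradiction: suppose $\chi(S_\gamma) \geq 0$, and show that $\gamma$ must then be homotopic to a power of a simple loop, contradicting the hypothesis. The plan is to enumerate the possible homeomorphism types of $S_\gamma$ via the classification of compact orientable surfaces (with boundary and interior punctures), and to dispatch each case using either Lemma~\ref{lem:essential} or a direct fundamental group computation.

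Writing $\chi(S_\gamma) = 2 - 2g - b - p$, where $g$, $b$, $p$ are the genus, the number of boundary components, and the number of punctures, the solutions with $\chi \geq 0$ are: the sphere, the disk, the once-punctured sphere, the annulus, the twice-punctured sphere, the once-punctured disk, and the torus. If $S_\gamma$ has empty boundary, then $S_\gamma$ is both open and closed in the connected surface $S$, forcing $S_\gamma = S$; for the sphere, once-punctured sphere, twice-punctured sphere, and torus, the group $\pi_1(S_\gamma)$ is respectively trivial, trivial, $\Bo Z$, and $\Bo Z^2$, and in every case each element is a power of an element represented by a simple closed curve (for $\Bo Z^2$, any primitive vector is realized by a simple closed geodesic on the torus). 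Hence $\gamma$ would be a power of a simple loop, contradicting the hypothesis. If $S_\gamma$ is a disk, then its single boundary circle bounds a disk in $S_\gamma \subseteq S$, hence is not essential in $S$, contradicting Lemma~\ref{lem:essential}. Finally, if $S_\gamma$ is an annulus or a once-punctured disk, then $\pi_1(S_\gamma) = \Bo Z$ is generated by a simple loop in $S_\gamma$, which is also simple in $S$; so $\gamma$ is homotopic in $S_\gamma$, hence in $S$, to a power of this simple loop, again contradicting the hypothesis.

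The main thing to verify is that the enumeration of surfaces with $\chi \geq 0$ is complete and that each case admits the intended contradiction; beyond the classification of surfaces and the standard parameterization of simple closed curves on the torus, no deeper geometric or topological input is needed. The torus case is perhaps the least automatic, since one must know that every element of $\pi_1(T^2) = \Bo Z^2$ is a non-negative integer multiple of a primitive vector, and that primitive vectors correspond to simple closed curves.
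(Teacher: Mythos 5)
Your proof is correct and follows essentially the same route as the paper: enumerate the orientable surfaces with nonnegative Euler characteristic and observe that on each of them every loop is homotopic to a power of a simple loop (the paper simply lists the annulus, torus, disk, punctured disk and twice-punctured sphere and asserts this in one line). Your version is just more explicit, adding the case analysis for the empty-boundary surfaces and routing the disk case through Lemma~\ref{lem:essential} rather than the trivial-$\pi_1$ observation.
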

\begin{proof}
On annulus, torus, disk, punctured disk or 2-punctured sphere, every loop is homotopic to a power of a simple loop. 
Thus $S_{\gamma}$ is none of those. 
Every other oriented surface has negative Euler characteristic. 
\end{proof}

Now we prove the main result of this subsection. 

\begin{theorem}[Generalized Kra's Theorem]\label{thm:gen.kra}
Let $S$ be a closed oriented surface (possibly with punctures) whose Euler characteristic is negative.
Let~$\star$ be a base point. Then:
\begin{enumerate}[leftmargin=0.5cm,itemindent=.5cm,labelwidth=\itemindent,labelsep=0cm,align=left,label=\alph*)]
\item If $x\in \pi_1(S,\star)$ is a power of a simple element represented by $\gamma^n$, where $\gamma$ is a simple essential loop in $S$,
then $\Push(x) = T_{\gamma^+}^nT_{\gamma^-}^{-n}$ is the Nielsen-Thurston decomposition of $\Push(x) \in\MCG(S,\star)$.
\item If $x\in \pi_1(S,\star)$ is not a power of a simple element,
then the Thurston-Nielsen decomposition of $\Push(x) \in\MCG(S,\star)$ consists of a single pseudo-Anosov component.
\end{enumerate}
\end{theorem}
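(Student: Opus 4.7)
The formula $Push(\gamma^n) = T_{\gamma^+}^n T_{\gamma^-}^{-n}$ is immediate from Example~\ref{e:push.simple} on raising to the $n$-th power; the two twists commute since their supports are disjoint. To identify this expression with the Nielsen--Thurston decomposition I would apply Proposition~\ref{prop:crs.char} to the candidate reduction system $C=\{[\gamma^+],[\gamma^-]\}$. These are essential simple loops in $S\setminus\{\star\}$, non-isotopic there (as already observed in Example~\ref{e:push.simple} using $\chi(S)<0$). Cutting $S\setminus\{\star\}$ along $C$ yields annular neighborhoods of $\gamma^{\pm}$ on which $Push(\gamma^n)$ is a nontrivial Dehn twist power, together with complementary pieces on which it is the identity. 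Neither curve can be dropped, since removing one would leave a reducible, non-pseudo-Anosov, non-trivial component, giving minimality of $C$.

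\textbf{Plan for (b).} Let $\gamma$ be a representative of $x$ in minimal position on $S$, and form the associated subsurface $S_\gamma$. By Lemmas~\ref{lem:essential}, \ref{L:filling} and \ref{L:Euler-neg}, $S_\gamma$ has negative Euler characteristic, is filled by $\gamma$, and every boundary component is an essential simple loop in $S$ (hence also in $S\setminus\{\star\}$). Choose $C$ to be a set of representatives for the distinct isotopy classes (in $S\setminus\{\star\}$) of these boundary components; Lemma~\ref{lem:annulus} ensures that any identifications are across annular complementary pieces.

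\textbf{Reducing to Kra.} The crux of the argument is that $Push(x)$ admits a representative homeomorphism $\psi$ supported in a thin annular neighborhood of $\gamma$, which therefore lies inside $S_\gamma$. Consequently $\psi$ is the identity on every component of $(S\setminus\{\star\})\setminus S_\gamma$. Cutting along $C$ reduces the problem to analyzing $\psi$ on the single component that contains a homeomorphic copy of $S_\gamma\setminus\{\star\}$, where $\psi$ is precisely the push map of $\gamma$ taken inside $S_\gamma$. Since $\gamma$ fills $S_\gamma$, Kra's theorem (applied to $S_\gamma$, with its boundary components capped by disks or replaced by punctures so as to reduce to the closed case without affecting filling or the pseudo-Anosov conclusion) asserts that this restriction is pseudo-Anosov. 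Minimality of $C$ for Proposition~\ref{prop:crs.char} then follows: dropping any $c\in C$ would re-glue the pseudo-Anosov piece to a trivial piece across $c$, producing a reducible, non-trivial, non-pseudo-Anosov component.

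\textbf{Main obstacle.} The principal technical difficulty is the transfer of Kra's theorem from the closed-surface setting to the bordered surface $S_\gamma$: one must verify that capping or puncturing the boundary neither destroys the filling property of $\gamma$ nor spoils the pseudo-Anosov conclusion for the push map. A secondary technicality is the bookkeeping for parallel boundary components of $S_\gamma$, which is precisely what Lemma~\ref{lem:annulus} is designed to control.
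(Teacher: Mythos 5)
Your proposal is correct and follows essentially the same route as the paper: part (a) via Example~\ref{e:push.simple} and minimality of $\{\gamma^+,\gamma^-\}$ checked against Proposition~\ref{prop:crs.char}, and part (b) via a minimal-position representative, the filled subsurface $S_\gamma$ with Lemmas~\ref{lem:essential}, \ref{lem:annulus}, \ref{L:filling}, \ref{L:Euler-neg}, Kra's theorem on $S_\gamma$, and verification that the (non-parallel) boundary components form the canonical reduction system. The ``main obstacle'' you flag --- transferring Kra's theorem to the bordered $S_\gamma$ and the bookkeeping for parallel boundary curves --- is exactly the subtlety the paper isolates in its in-proof remark and resolves the same way.
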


\begin{proof}
If $x$ is a power of a simple element, then by Example \ref{e:push.simple}, 
$$\Push(x) = T_{\gamma^+}^nT_{\gamma^-}^{-n}.$$
Since $S$ is not a torus and $\gamma$ is by assumption essential, 
$\gamma^+$ and $\gamma^-$ are not homotopic essential disjoint simple loops. 
If an element of a mapping class group can be represented as a product of powers 
of Dehn twists along disjoint not homotopic essential simple loops,
then this representation is unique.
It follows that the set $C~=~\{\gamma^+,\gamma^-\}$ satisfies the conditions of Proposition~\ref{prop:crs.char}.
Indeed, if $C$ is smaller, then $\Push(x)$ equals to a power of $T_{\gamma^{\pm}}$, and 
hence is not equal to $T_{\gamma^+}^nT_{\gamma^-}^{-n}$.

Assume that $x$ is not a power of a simple element. 
Let $\gamma$ be a loop in a minimal position which represents $x$.
Then by Lemma~\ref{L:Euler-neg} the Euler characteristic of $S_{\gamma}$ is negative and
by Lemma~\ref{L:filling} $\gamma$ fills $S_{\gamma}$. 
By the result of Kra (\cite{kra}), $\Push(x)$ is pseudo-Anosov on $S_{\gamma}$. 
We have to prove that $\Push(x)$, as an element of $\MCG(S,\star)$, has the desired decomposition.

\begin{remark*}
Even though $\Push(x)$ is pseudo-Anosov on $S_{\gamma}$, it does not follow automatically that 
it is pseudo-Anosov on the subsurface $S_{\gamma} \subset S$ regarded as an element of $\MCG(S,\star)$. 
For example one can easily imagine a homeomorphism $\phi$ on $S$ which is trivial in $\MCG(S,\star)$ but
is pseudo-Anosov on some smaller subsurface containing the support of $\phi$. 
In addition, it also may happen that some boundary loops of $S_{\gamma}$ are homotopic, 
and hence the set of boundary loops of $S_{\gamma}$ cannot be taken as the canonical reduction system.
\end{remark*}

Let $C$ be a set of connected components of $\partial S_{\gamma}$. 
By Lemma~\ref{lem:annulus} and Lemma~\ref{lem:essential}, we can write 
$$C = \{\partial_1 S_{\gamma}, \partial_1^{'} S_{\gamma},\ldots,\partial_t S_{\gamma}, \partial_t^{'} S_{\gamma},
\partial_{t+1} S_{\gamma}, \partial_{t+2} S_{\gamma},\ldots,\partial_u S_{\gamma}\},$$ 
such that for each $0<i<t+1$ loops $\partial_i S_{\gamma}$ and $\partial_i^{'} S_{\gamma}$ are homotopic, 
and the set $C^0 = \{\partial_1 S_{\gamma},\ldots,\partial_u S_{\gamma}\}$ consists
of pairwise non-homotopic loops. 

We prove that $C^0$ is the canonical reduction system for $\Push(x)$.
To do that, it is enough to check the first and the second conditions of Proposition~\ref{prop:crs.char}.
Let $S'_{\gamma}$ be the connected component of $S \backslash C^0$ which contains $\gamma$. 
The surface $S'_{\gamma}$ is just $S_{\gamma}$ with annuli attached to some boundary components. 
Thus if $\Push(x)$ is pseudo-Anosov on $S_{\gamma}$, it is pseudo-Anosov on $S'_{\gamma}$. 
On any other component of $S \backslash C^0$, $\Push(x)$ is trivial.

Let us check the second condition. Let $C' \subset C^0$, $\partial_i S_{\gamma} \not\in C'$ and 
$S'$ a connected component of $S \backslash C'$ which contains $\gamma$. 
Since $\partial_i S_{\gamma} \subset S'$, $\Push(x)$ fixes $\partial_t S_{\gamma}$.
All components of the boundary of $S'$ are in $C^0$, thus $\partial_t S_{\gamma}$ is not 
homotopic to any component of the boundary of $S'$. Thus $\partial_t S_{\gamma}$ is essential.
Since $\Push(x)$ is nontrivial and fixes an essential curve, it is reducible.
It follows, that $C'$ does not satisfy the second condition. Hence $C^0$ is the canonical reduction system.
Thus the decomposition of $\Push(x)$ consists of a single pseudo-Anosov diffeomorphism on $S'_{\gamma}$.
\end{proof}

\subsection{Distortion of elements in the $\Aut$-norm of surface groups.}\label{sub:surface.main}
\begin{theorem}\label{thm:surface.main}
Let $x\in\Bo\Gamma_g$. Then either 
\begin{enumerate}[leftmargin=0.5cm,itemindent=.5cm,labelwidth=\itemindent,labelsep=0cm,align=left,label=\alph*)]
\item $x$ is a power of a simple non-separating loop, then the cyclic subgroup generated by $x$ is bounded in the $\Aut$-norm, or
\item $x$ is undistorted in the $\Aut$-norm. Moreover, there exists a homogeneous quasimorphism bounded 
on the set of all simple elements and is non-trivial on $x$.
\end{enumerate}
\end{theorem}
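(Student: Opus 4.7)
\emph{Part (a).} Suppose $x = \gamma^n$ with $\gamma$ a simple non-separating loop. By the change-of-coordinates principle -- transitivity of the $\MCG^{\pm}(S_g,\star) \cong \Aut(\Bo\Gamma_g)$ action on based simple non-separating loops -- we may replace $\gamma$ by a standard symplectic generator $a_1$ of $\Bo\Gamma_g$, so that $x = a_1^n$. Let $b_1$ be the dual simple generator, meeting $a_1$ once at $\star$. The Dehn twist $T_{a_1}^m$ takes the simple element $b_1$ to another simple element, whose based homotopy class is $a_1^m b_1$ (in one twist convention). Hence $a_1^m = (a_1^m b_1)\cdot b_1^{-1}$ is a product of two simple elements for every $m\in\Bo Z$, so $|a_1^m|_s \leq 2$ and the cyclic subgroup $\langle x\rangle = \langle a_1^n\rangle$ is bounded in $|\mathord{\cdot}|_s$.

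\emph{Part (b) -- setup.} Suppose $x$ is not a power of any simple non-separating loop. By Lemma~\ref{lem:qm.undist}, it suffices to produce a homogeneous quasimorphism on $\Bo\Gamma_g$ nonzero on $x$ and bounded on all simple elements. Via the Birman embedding $Push\colon\Bo\Gamma_g \hookrightarrow \MCG(S_g,\star)$, this in turn reduces to producing a homogeneous quasimorphism $Q$ on $\MCG(S_g,\star)$ with $Q(Push(x)) \neq 0$ and $Q$ bounded on $\{Push(s) : s \text{ simple}\}$; the pullback $Q\circ Push$ then does the job. The quasimorphism $Q$ will be a Bestvina-Bromberg-Fujiwara projection-complex quasimorphism on $\MCG(S_g,\star)$, tuned to the Nielsen-Thurston decomposition of $Push(x)$ supplied by Theorem~\ref{thm:gen.kra}.

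\emph{Part (b) -- two cases.} Either (i) $x$ is not a power of any simple element, in which case $Push(x)$ has Nielsen-Thurston normal form equal to a single pseudo-Anosov factor on the filling subsurface $S'_\gamma$; or (ii) $x = \gamma^n$ for a simple separating loop $\gamma$, in which case $Push(x) = T_{\gamma^+}^n T_{\gamma^-}^{-n}$ is a multi-twist along disjoint, non-isotopic \emph{separating} simple curves of a definite topological type in $S_g\setminus\star$. In case (i) we take $Q$ to be the BBF quasimorphism attached to the pseudo-Anosov class of $Push(x)$: it is unbounded on $Push(x)^k$, and bounded on all multi-twists, in particular on every $Push(s)=T_{s^+}T_{s^-}^{-1}$. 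In case (ii) we take $Q$ to be a BBF quasimorphism attached to the $\MCG(S_g,\star)$-orbit of $\gamma^+$ in a projection complex of separating curves; $Q(Push(x)^k)$ grows linearly in $k$, while $Q$ is bounded on $Push(s)$ for every simple $s$ because the twisting curves $s^\pm$ have different topological type from $\gamma^\pm$ (non-separating when $s$ is non-separating, or separating of a different cut type when $s$ is separating but topologically distinct from $\gamma$).

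\emph{Main obstacle.} The pseudo-Anosov case is now standard given Theorem~\ref{thm:gen.kra} and the BBF machinery. The delicate step is case (ii): we need simultaneously that the two signed contributions $T_{\gamma^+}^n$ and $T_{\gamma^-}^{-n}$ do not cancel inside $Q$ -- which uses that $\gamma^+$ and $\gamma^-$ are non-isotopic in $S_g\setminus\star$, lying on opposite sides of $\star$ in the pair of pants they cobound with the puncture -- and that $Q$ is uniformly bounded on $Push(s)$ for every simple $s$, which ultimately amounts to controlling subsurface projections of the various $s^\pm$ and exploits essentially that $\gamma$ separates, so that the projection complex can be calibrated to detect only separating curves of $\gamma$'s topological type.
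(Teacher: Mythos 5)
Part (a) of your argument is fine and is essentially the paper's: exhibit $a_1^m b_1$ (their $\alpha^n\gamma$) as a simple element, write $a_1^m$ as a product of two simple elements, and use transitivity of $\Aut(\Bo\Gamma_g)$ on simple non-separating classes.

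Part (b) has a genuine gap: you try to build the quasimorphism $Q$ on the \emph{full} group $\MCG(S_g,\star)$, but every homogeneous quasimorphism on a group vanishes on any element conjugate to its inverse, and $Push(x)$ is frequently conjugate to $Push(x)^{-1}$ inside $\MCG(S_g,\star)$. For instance, if $\gamma$ is a separating simple loop cutting $S_g$ into two homeomorphic pieces, an orientation-preserving based homeomorphism swapping the two sides sends $\gamma^{\pm}$ to $\gamma^{\mp}$ and conjugates $T_{\gamma^+}^nT_{\gamma^-}^{-n}$ to its inverse; similarly many non-simple $x$ are inverted by a mapping class, making $Push(x)$ achiral in the full group. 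So the $Q$ you posit simply does not exist on $\MCG(S_g,\star)$ for such $x$, in either of your cases (i) or (ii). The paper's fix is to pass to the finite-index subgroup $H_{\circ}=\ker\bigl(\MCG(S_g,\star)\to\Aut(H_1(S_g;\Bo Z/3\Bo Z))\bigr)$, which still contains $Push(\Bo\Gamma_g)$; the cited BBF results (\cite[Corollaries 5.3, 5.5]{BBF}) produce a homogeneous quasimorphism on $H_{\circ}$ nonzero on $Push(x)$ precisely when the Nielsen--Thurston form has a pseudo-Anosov piece or a twist along a separating curve, which is exactly what Theorem~\ref{thm:gen.kra} guarantees here.

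Relatedly, your mechanism for boundedness on simple elements is both harder than necessary and, as stated, incomplete: in case (ii) you argue $Q$ is small on $Push(s)$ because $s^{\pm}$ has ``different topological type from $\gamma^{\pm}$,'' but this says nothing about the simple $s$ lying in the same $\Aut(\Bo\Gamma_g)$-orbit as $\gamma$ itself, where $Q(Push(s))=Q(Push(\gamma))\neq 0$ is forced by homogeneity. The paper sidesteps all subsurface-projection estimates: since $q$ is a homogeneous quasimorphism on $H_{\circ}$ it is constant on $H_{\circ}$-conjugacy classes, so the pullback $\widetilde q(w)=q(c_w)$ satisfies $\widetilde q(\psi(w))=q(\psi c_w\psi^{-1})=\widetilde q(w)$ for $\psi\in H_{\circ}$, i.e.\ it is $H_{\circ}$-invariant; and an $H_{\circ}$-invariant function is automatically bounded on the set of simple elements because that set is a finite union of $H_{\circ}$-orbits (Lemma~\ref{lem:aut.imp.undist} and Corollary~\ref{cor:H-inv}). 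You should replace your ``main obstacle'' paragraph with this invariance argument and carry out the construction on $H_{\circ}$ rather than on the full mapping class group.
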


We start with transferring the problem from finding a suitable quasimorphism 
on $\Gamma_g$ to finding a quasimorphism on (a finite index subgroup) of $\MCG(S)$. 
Let us consider the general case first. Let $G$ be a group and let $\Aut(G)$ be the group of automorphisms of $G$. 
Let $c_g(x) = gxg^{-1}$ be the inner automorphism induced by $g$. 
Define homomorphism $c\colon G \to\Aut(G)$ by $c(g) = c_g$. 

\begin{lemma}\label{lem:aut.imp.undist}
Let $G$ be an $H$-finitely generated group for some $H<\Aut(G)$, and let $H_{\circ}<H$ be a finite index subgroup such that $\im(c)<H_{\circ}$. 
Let $x\in G$ and assume that there exists a homogeneous quasimorphism $q\colon H_{\circ}\to\Bo R$ such that $q(c_x) \neq 0$. 
Then $x$ is undistorted in $|\mathord{\cdot}|_H$.
\end{lemma}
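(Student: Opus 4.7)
The plan is to pull back $q$ along the conjugation homomorphism $c\colon G \to \Aut(G)$, producing a homogeneous quasimorphism $\tilde q = q\circ c$ on $G$, and then to apply Lemma~\ref{lem:qm.undist}.

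First, observe that $c$ is a group homomorphism (since $c_{gh}=c_g c_h$), and the hypothesis $\operatorname{im}(c)<H_\circ$ lets us view it as a map $c\colon G\to H_\circ$. Because $q$ is a homogeneous quasimorphism and $c$ is a homomorphism, the composition $\tilde q$ is automatically a homogeneous quasimorphism on $G$ with defect at most $D_q$, and the assumption $q(c_x)\neq 0$ translates directly into $\tilde q(x)\neq 0$.

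The key step, and the point where I expect the main obstacle, is to show that $\tilde q$ is bounded on an $H$-finite generating set of $G$. Pick a finite $S\subset G$ whose $H$-saturation $\bar S=\{h(s)\colon s\in S\cup S^{-1},\,h\in H\}$ generates $G$; I need a uniform bound on $q(c_{h(s)})$ as $h$ ranges over $H$ and $s$ over $S\cup S^{-1}$. The obstruction is that $q$ is only invariant under $H_\circ$-conjugation, whereas $\bar S$ is built from the full group $H$, which need not preserve $H_\circ$. To circumvent this, I pass to the normal core $N=\bigcap_{h\in H}hH_\circ h^{-1}$; this is a finite-index normal subgroup of $H$ contained in $H_\circ$, and the identity $hc_g h^{-1}=c_{h(g)}$ shows that $\operatorname{im}(c)$ is normalized by $H$, hence still lies inside $N$. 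So $q|_N$ is a homogeneous quasimorphism, and $q(c_x)\neq 0$ is unaffected. Fixing coset representatives $h_1,\dots,h_k$ of $N$ in $H$ and writing an arbitrary $h\in H$ as $h=h_i n$ with $n\in N$, a short manipulation with $c_{h(s)}=hc_sh^{-1}$ combined with the normality of $N$ exhibits $c_{h(s)}$ as an $N$-conjugate of $c_{h_i(s)}$ (via the element $h_inh_i^{-1}\in N$). Since $q$ is constant on $N$-conjugacy classes, the set $\{\tilde q(h(s))\colon h\in H,\,s\in S\cup S^{-1}\}$ collapses into the finite set of values $\{q(c_{h_i(s)})\colon 1\le i\le k,\ s\in S\cup S^{-1}\}$, and is therefore bounded.

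With $\tilde q$ homogeneous on $G$, nonzero on $x$, and bounded on an $H$-finite generating set, Lemma~\ref{lem:qm.undist} concludes that $x$ is undistorted in $|\mathord{\cdot}|_H$. The whole argument rests on the normal-core trick: the finite-index hypothesis on $H_\circ$ is used in precisely one place, namely to replace the potentially infinite $H$-orbit $\{c_{h(s)}\colon h\in H\}$ by finitely many $N$-conjugacy classes, on each of which $q$ is automatically constant.
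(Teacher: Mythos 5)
Your proof is correct, and its engine is the same as the paper's: pull $q$ back to $\widetilde{q}=q\circ c$ on $G$, use the identity $\psi c_w\psi^{-1}=c_{\psi(w)}$ together with the conjugacy-invariance of homogeneous quasimorphisms, and feed the result into Lemma~\ref{lem:qm.undist}. Where you diverge is in handling the finite-index hypothesis. You keep working with the full group $H$ and control $\widetilde{q}$ on the $H$-saturated set $\bar{S}$ by passing to the normal core $N=\bigcap_{h\in H}hH_{\circ}h^{-1}$, checking $\operatorname{im}(c)<N$, and showing via coset representatives that $c_{h(s)}$ is an $N$-conjugate of one of finitely many elements $c_{h_i(s)}$; this is valid (the computation $c_{h(s)}=(h_inh_i^{-1})\,c_{h_i(s)}\,(h_inh_i^{-1})^{-1}$ with $h_inh_i^{-1}\in N<H_{\circ}$ is exactly what is needed), but it is more machinery than required. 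The paper instead observes that any $H$-finite generating set $\bar{S}$ is automatically $H_{\circ}$-finite, since $\#(\bar{S}/H_{\circ})=\#(H/H_{\circ})\cdot\#(\bar{S}/H)$, so the norms $|\mathord{\cdot}|_H$ and $|\mathord{\cdot}|_{H_{\circ}}$ are defined by the very same set and it suffices to prove undistortion in $|\mathord{\cdot}|_{H_{\circ}}$; there $\widetilde{q}$ is genuinely $H_{\circ}$-invariant and Corollary~\ref{cor:H-inv} applies directly. In short, your normal-core trick buys nothing over the paper's one-line counting observation, but it is a correct alternative discharge of the same step, and it has the minor virtue of never leaving the norm $|\mathord{\cdot}|_H$.
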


\begin{proof} 
Let $\bar{S}$ be an $H$-finite generating set of $G$. 
Then $\bar{S}$ is $H_{\circ}$-finite since $\#(\bar{S}/H_{\circ})=\#(H/H_{\circ})\#(\bar{S}/H)$. 
Thus $\bar{S}$ can be used to define both norms $|\mathord{\cdot}|_H$ and $|\mathord{\cdot}|_{H_{\circ}}$.
To prove that $x$ is undistorted in $|\mathord{\cdot}|_H$ it is enough to prove that it is undistorted in $|\mathord{\cdot}|_{H_{\circ}}$. 

Let $\widetilde{q}$ be the pull back of $q$ to $G$, i.e., $\widetilde{q}(w) = q(c_w)$ for $w \in G$.
Now we will show, that $\widetilde{q}$ is $H_{\circ}$-invariant. Let $w\in G$ and $\psi\in H_{\circ}$.
Note that $\psi c_w\psi^{-1}=c_{\psi(w)}$ and that $q$ is constant on conjugacy classes of $H_{\circ}$. We have
$$
\widetilde{q}(\psi(w))=q(c_{\psi(w)})=q(\psi c_w \psi^{-1})=q(c_w)= \widetilde{q}(w).
$$
We apply Corollary~\ref{cor:H-inv} and finish the proof. 
\end{proof}

We will use this lemma in the case of $G=\Bo\Gamma_g$ and $H=\Aut(\Bo\Gamma_g)$.
We need to take a finite index subgroup $H_{\circ}$, because in most cases there is no 
homogeneous quasimorphism on the whole group $\Aut(\Bo\Gamma_g)$ which is non-zero on a given element $c_x$. 
We will be able to find such quasimorphism on some $H_{\circ}$ and 
conclude undistortedness of $x$ in $|\mathord{\cdot}|_{\Aut(\Bo\Gamma_g)}$.

\begin{proof}[Proof of Theorem \ref{thm:surface.main}]
\textit{Case 1}.
Let $\alpha$ and $\gamma$ be the elements shown in the Figure \ref{fig:alpha}.
We first prove that the cyclic group generated by $\alpha$ is bounded in $|\mathord{\cdot}|_{\Aut(\Bo\Gamma_g)}$. 
Let $|.|_s$ be the simple loops norm.
It is a simple observation, that the loop $\alpha^n\gamma$ is simple for each $n\in\Bo Z$.
Thus we have 
$$|\alpha^n|_s = |\alpha^n\gamma\gamma^{-1}|_s \leq 2.$$
Since $|\mathord{\cdot}|_s$ defines bi-Lipschitz equivalence class $|\mathord{\cdot}|_{\Aut(\Bo\Gamma_g)}$, 
the cyclic subgroup $\langle \alpha \rangle$ is bounded in the $\Aut$-norm.
\begin{figure}[htb]
\centerline{\includegraphics[width=0.4\textwidth]{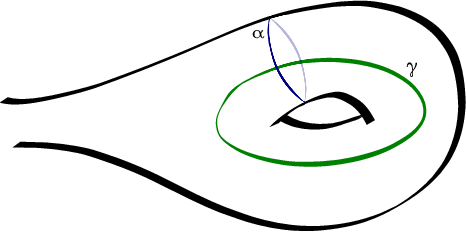}}
\caption{\label{fig:alpha} Loops $\alpha$ and $\gamma$.}
\end{figure}

Now assume that $x\in\Bo\Gamma_g$ is a power of a simple non-separating loop.
Note that every simple non-separating loop can be mapped to $\alpha$.
Indeed, for every two simple non-separating loops $\beta_1$, $\beta_2$, the surfaces $S\backslash\beta_i$ are homeomorphic,
thus all simple non-separating loops are in one $\Aut(\Bo\Gamma_g)$-orbit.
It follows that $x$ can be mapped to $\alpha^m$ for some $m\in\Bo N$.
Thus the subgroup generated by $x$ is bounded.

\textit{Case 2.} Assume that $x\in\Bo\Gamma_g$ is not a power of a simple non-separating loop. 
Let $S$ be a closed surface without punctures and let $\star\in\Bo\Gamma_g$.
Consider the natural map 
$$\Psi \colon \MCG(S,\star) \to \Aut(\Bo\Gamma_g)$$ 
induced by the action of a homeomorphism on the fundamental group of $(S,\star)$.
Let $\Aut^+(\Bo\Gamma_g)\coloneqq \im(\Psi)$, then $\Aut^+(\Bo\Gamma_g)$ has index $2$ in $\Aut(\Bo\Gamma_g)$. 
By the Baer-Dehn-Nielsen theorem, $\Psi$ is an embedding.

It is easy to see (at least for simple elements, see Example \ref{e:push.simple}) that $\Push(w)$ induces a conjugation on $\Bo\Gamma_g$ by $w$. 
Hence instead of working with $\Aut^+(\Bo\Gamma_g)$ and homomorphism $c$, we work with $\MCG(S,\star)$ and $\Push$. 

By the work of Bestvina-Bromberg-Fujiwara (\cite{BBF}) we know that there are plenty of quasimorphisms on mapping class groups.
We describe their result in the way that is convenient for us. 
The group $\MCG(S,\star)$ acts on $H_1(S,\Bo Z/ 3\Bo Z)$. 
Let $H_{\circ}$ be the subgroup of $\MCG(S,\star)$ which contains all elements that act trivially. 
Hence $H_{\circ}$ is a finite index subgroup of $\MCG(S,\star)$. Since conjugation acts trivially on homology, $\Push(\Bo\Gamma_g)<H_{\circ}$.
It follows from\cite[Corollary 5.3 and Corollary 5.5]{BBF}
that there exists a homogeneous quasimorphism on $H_{\circ}$ which is non-zero on $\Push(x)$ if one of the following holds:
\begin{enumerate}[itemsep=-0.2cm,leftmargin=0.5cm,itemindent=.5cm,labelwidth=\itemindent,labelsep=0cm,align=left,label=\alph*)]
\item In the Nielsen-Thurston decomposition of $\Push(x)$ there is at least one pseudo-Anosov element.
\item In the Nielsen-Thurston decomposition of $\Push(x)$ there is at least one non-trivial power of a Dehn twist along some separating curve. 
\end{enumerate}

Now we finish the proof. If $x\in\Bo\Gamma_g$ is not a power of non-separating simple loop, then by Theorem~\ref{thm:gen.kra} either 
\begin{enumerate}[itemsep=-0.2cm,leftmargin=0.5cm,itemindent=.5cm,labelwidth=\itemindent,labelsep=0cm,align=left,label=\alph*)]
\item $x$ is not simple, and the Nielsen-Thurston decomposition contains exactly one pseudo-Anosov element, or
\item $x$ is a power of a Dehn twist along separating loop, and then the Nielsen-Thurston decomposition is 
$$\Push(x) = T_{\gamma^+}^nT_{\gamma^-}^n$$ 
for some separating simple loop $\gamma$ and $n\in\Bo N$.  
\end{enumerate}
In both cases we have a homogeneous quasimorphism 
$$q\colon H_{\circ}\to\Bo R$$ 
which is non-trivial on $\Push(x)$. 
Since $H_{\circ}$ is a finite index subgroup of $\MCG(S,\star)$ it can be viewed as finite index subgroup of $\Aut(\Bo\Gamma_g)$. 
Using Lemma~\ref{lem:aut.imp.undist} we conclude the proof of the theorem.
\end{proof}

Let $S_n$ be the set of all elements of $\Bo\Gamma_g$ which are represented by 
curves with crossing-number at most $n$ (see \cite[Definition~1.1]{C-08})
and let $S_n'$ be the set of all primitive elements in $S_n$. 
In \cite{C-08} D. Calegari proved that $x \in \Bo\Gamma_g$ is undistorted in $|\mathord{\cdot}|_{S_n}$ 
if it has a non-zero self-intersection number. 
He asked (see Question 1.6) whether simple elements are undistorted 
with respect to the metrics $|\mathord{\cdot}|_{S'_n}$.

First of all, we note that all these metrics are bi-Lipschitz equivalent. 
Indeed, by \cite[Remark 1.4]{C-08} all sets $S'_n$ are $\Aut(\Bo\Gamma_g)$-finite.  
The proof is analogous to the proof of the fact that $S_0$, 
which is the set of all simple elements, is $\Aut(\Bo\Gamma_g)$-finite. 
It means that all the metrics~$|\mathord{\cdot}|_{S'_n}$
define the same bi-Lipschitz equivalence class $|\mathord{\cdot}|_{\Aut(\Bo\Gamma_g)}$. 
Thus the part of Theorem~\ref{thm:surface.main} that concerns simple elements gives a complete answer 
to the question of D. Calegari, i.e., we proved the following 

\begin{corollary}\label{cor:calegari-question} 
Simple separating elements in $\Bo\Gamma_g$ are undistorted 
with respect to the $|\mathord{\cdot}|_{S'_n}$-norm for every $n$, 
and simple non-separating elements generate bounded cyclic subgroup.
\end{corollary}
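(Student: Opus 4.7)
The plan is to reduce the statement to Theorem~\ref{thm:surface.main} by exploiting the fact that every norm $|\mathord{\cdot}|_{S'_n}$ lies in the bi-Lipschitz class $|\mathord{\cdot}|_{\Aut(\Bo\Gamma_g)}$. Both undistortedness of a single element and boundedness of a cyclic subgroup are invariants of the bi-Lipschitz equivalence class of a norm, so once this reduction is in place, the corollary follows immediately from the dichotomy already established for the $\Aut$-norm.

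First, I would make the bi-Lipschitz equivalence precise. Following \cite[Remark~1.4]{C-08} (cited in the paragraph preceding the corollary), the set $S'_n$ of primitive elements with crossing-number at most $n$ decomposes into finitely many $\Aut(\Bo\Gamma_g)$-orbits, so $S'_n$ is $\Aut(\Bo\Gamma_g)$-finite. Since $S'_n \supseteq S'_0$ and $S'_0$ already generates $\Bo\Gamma_g$, the framework developed in Subsection~\ref{sub:general} places the word norm $|\mathord{\cdot}|_{S'_n}$ inside the maximal bi-Lipschitz class of $\Aut(\Bo\Gamma_g)$-invariant norms, namely $|\mathord{\cdot}|_{\Aut(\Bo\Gamma_g)}$. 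In particular, for every $n$ the metrics $|\mathord{\cdot}|_{S'_n}$ are mutually bi-Lipschitz equivalent.

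Next, I would split into the two cases provided by Theorem~\ref{thm:surface.main}. A simple separating element is not a power of a simple non-separating loop, so it falls into case (b) of that theorem: there exists a homogeneous quasimorphism bounded on the set of simple elements and non-trivial on $x$, which forces $x$ to be undistorted in $|\mathord{\cdot}|_{\Aut(\Bo\Gamma_g)}$, and hence in each $|\mathord{\cdot}|_{S'_n}$. A simple non-separating element falls into case (a) of Theorem~\ref{thm:surface.main}, yielding directly that the cyclic subgroup it generates is bounded in $|\mathord{\cdot}|_{\Aut(\Bo\Gamma_g)}$, and therefore bounded in each $|\mathord{\cdot}|_{S'_n}$ as well.

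All of the real work — the construction of the Bestvina-Bromberg-Fujiwara quasimorphisms on a finite index subgroup of $\MCG(S,\star)$ and the generalized Kra theorem identifying the Nielsen-Thurston decomposition of $Push(x)$ — is already absorbed into Theorem~\ref{thm:surface.main}. Consequently the corollary itself presents no real obstacle; the only bookkeeping step to watch is verifying that $S'_n$ (rather than the larger $S_n$) is still $\Aut(\Bo\Gamma_g)$-finite, which is exactly the content of \cite[Remark~1.4]{C-08}.
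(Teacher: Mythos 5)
Your proposal is correct and follows essentially the same route as the paper: it establishes that each $S'_n$ is $\Aut(\Bo\Gamma_g)$-finite via \cite[Remark~1.4]{C-08}, so that all the norms $|\mathord{\cdot}|_{S'_n}$ lie in the bi-Lipschitz class $|\mathord{\cdot}|_{\Aut(\Bo\Gamma_g)}$, and then invokes the two cases of Theorem~\ref{thm:surface.main}. No substantive difference from the paper's argument.
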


\subsection{More applications and remarks.} 
\label{ss:applications}
\begin{theorem}[Many fellows property]\label{fac:many.fellows.prop.}
Let $S$ be a closed hyperbolic surface of genus $g$ and $l$ be a closed simple non-separating geodesic. 
Then for every $p\in l$ and every neighborhood $U$ of $p$, there is another simple closed geodesic $l'$ 
passing through $U$. 
\end{theorem}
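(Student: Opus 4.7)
The plan is to produce an infinite family of pairwise distinct simple closed geodesics $l_n$, each different from $l$, with $l_n \cap U \neq \emptyset$ for all sufficiently large $n$. The family comes directly from Case~1 of the proof of Theorem~\ref{thm:surface.main}: choose $\star \in l$, so that $l$ represents an element $\alpha \in \Bo\Gamma_g = \pi_1(S,\star)$, and let $\gamma$ be the simple loop at $\star$ of Figure~\ref{fig:alpha}, for which $\alpha^n\gamma$ is a simple element of $\Bo\Gamma_g$ for every $n \in \Bo Z$. Let $l_n$ denote the unique closed geodesic in the free homotopy class of $\alpha^n\gamma$; since a simple free homotopy class on a hyperbolic surface contains a simple geodesic representative, each $l_n$ is a simple closed geodesic. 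By the Morse lemma in $\Bo H^2$ applied to the piecewise-geodesic lift of $\alpha^n\gamma$, the length $\ell(l_n) = n\cdot\ell(l) + O(1)$, so the $l_n$ are eventually pairwise distinct and distinct from $l$.

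The crux is showing $l_n \cap U \neq \emptyset$ for $n$ large, which I would handle in the universal cover $\Bo H^2$. Let $a,c \in \mathrm{Isom}^+(\Bo H^2)$ be the isometries induced by $\alpha$ and $\gamma$, let $\tilde l$ be the axis of $a$ with ideal endpoints $p_a^\pm \in \partial\Bo H^2$, and lift $p$ to $\tilde p \in \tilde l$. A direct calculation in the upper half-plane model (taking $p_a^+ = \infty$, $p_a^- = 0$, $a(z) = \lambda z$ with $\lambda > 1$, and $c$ a M\"obius transformation) shows that as $n \to \infty$ the fixed points of $a^n c$ converge to $p_a^+$ (attracting) and $c^{-1}(p_a^-)$ (repelling). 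Replacing $\gamma$ by $\gamma^{-1}$ if necessary, we may assume $c^{-1}(p_a^-) \neq p_a^+$: a hyperbolic isometry of $\Bo H^2$ cannot swap two ideal points, so at most one of $\{c, c^{-1}\}$ exhibits this degeneracy. Now conjugate $a^n c$ by $a^{k_n}$ with $k_n = \lfloor n/2\rfloor$: the axis of the conjugate is the $a^{-k_n}$-image of the axis of $a^n c$, still projects to $l_n$, and has ideal endpoints converging to $p_a^+$ and $p_a^-$ respectively, since $k_n \to \infty$ and $n - k_n \to \infty$. Hence this chosen lift of $l_n$ converges to $\tilde l$ uniformly on compact subsets of $\Bo H^2$, and in particular passes within any preassigned $\epsilon > 0$ of $\tilde p$ for $n$ large; projecting to $S$ gives $l_n \cap U \neq \emptyset$ once $\epsilon$ is small enough.

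The hard part is this hyperbolic-geometric convergence step: performing the boundary fixed-point calculation, verifying the non-degeneracy (handled above via the $\gamma^{-1}$ trick), and invoking the standard fact that convergence of ideal endpoints on $\partial\Bo H^2$ implies uniform convergence of the corresponding geodesics on compact subsets of $\Bo H^2$. Everything else---the length estimate, the simplicity of the $l_n$, and their mutual distinctness---is immediate from the construction and from the tools already developed in the paper.
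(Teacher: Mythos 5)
Your argument is correct, but it is genuinely different from the one in the paper. The paper proves the statement by contradiction: if no simple closed geodesic other than $l$ entered $U$, one could take a $1$-form $\alpha$ supported in $U$ with $\int_l\alpha\neq 0$ and form the Barge--Ghys quasimorphism $q(x)=\int_{l_x}\alpha$; this $q$ would then vanish on all simple elements except the conjugates of the class $x_0$ of $l$, hence be bounded on the set of simple elements and nonzero on $x_0$, so by Lemma~\ref{lem:qm.undist} the element $x_0$ would be undistorted in the simple loops norm --- contradicting part a) of Theorem~\ref{thm:surface.main}. Your proof instead constructs the desired geodesics explicitly: the classes $\alpha^n\gamma$ from Case~1 of the proof of Theorem~\ref{thm:surface.main} are simple, and a computation with M\"obius transformations shows that (after conjugating by $a^{\lfloor n/2\rfloor}$) the axes of $a^nc$ have ideal endpoints converging to those of the axis of $a$, hence converge to $\tilde l$ locally uniformly; the degenerate case $c^{-1}(p_a^-)=p_a^+$ is correctly excluded by passing to $\gamma^{-1}$, since a fixed-point-free hyperbolic deck transformation cannot swap the two endpoints of $\tilde l$. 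Your route is more elementary and self-contained --- it uses no quasimorphisms, no Barge--Ghys, and not even the statement of Theorem~\ref{thm:surface.main}, only the observation that $\alpha^n\gamma$ is simple --- and it yields strictly more, namely an explicit infinite family of distinct simple closed geodesics accumulating on $l$. What the paper's softer argument buys is brevity and a illustration of the $\Bo b\Bo q$-dichotomy machinery: the ``many fellows'' property is exhibited as a direct geometric consequence of the boundedness of $\langle x_0\rangle$ in the $\Aut$-norm, which is the theme of the paper.
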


\begin{proof}
Assume on the contrary that there exists $p\in l$ and some neighborhood $U$ of $p$ such that
every closed geodesic different from $l$ does not pass through $U$.
Let $\Bo\Gamma_g$ be the fundamental group of $S$. 
For every $x\in\Bo\Gamma_g$ there is a unique closed geodesic $l_x$ in the free homotopy class of $x$.
Two geodesics $l_x$ and $l_{x'}$ are equal if and only if $x$ and $x'$ are conjugated.
Moreover, simple elements of $\Bo\Gamma_g$ correspond to simple closed geodesics.  
Let $l = l_{x_0}$. Let $\alpha$ be a $1$-differential form supported on $U$ such that $\int_{l}\alpha\neq 0$. 
In \cite{barge.ghys} Barge-Ghys showed that the following function:
$$
q(x) = \int_{l_x} \alpha,
$$
is a homogeneous quasimorphism. 
If $x$ is a simple element not conjugated to $x_o$, 
then $l_{x}\neq l_{x_o}$ and by our assumption $l_x$ does not pass through $U$.
Hence $q(l_x) = 0$. It follows that the only simple elements on which $q$ is non-zero are conjugates of $x_0$. 
Since $q$ is constant on conjugacy classes, it is bounded on the set of all simple elements. 
By Lemma~\ref{lem:qm.undist} we get that $x_0$ is undistorted in the simple loops norm 
which contradicts Theorem~\ref{thm:surface.main}   
\end{proof}

\begin{remark}
Theorem~\ref{fac:many.fellows.prop.} holds for separating closed geodesics as well, see \cite[Lemma~5.1]{parlier}. 
Note that unlike the set of all closed geodesics, the set of simple closed geodesics is not dense in $S$ and,
as suggested by Theorem~\ref{fac:many.fellows.prop.}, there are some preferred tracks chosen by simple geodesics. 
For a further discussion of this phenomenon see \cite{buser.parlier}.
\end{remark}

\begin{remark}
Let $x$ be a simple non-separating element and let $\gamma$ be a loop that represents $x$. 
Then there is no finite index subgroup of $\MCG(S,\star)$ containing $\Push(x)$
such that $T_{\gamma^+}$ is not conjugated to $T_{\gamma^-}$. Indeed, otherwise one 
could find a quasimorphism on $\MCG(S,\star)$ which is non-trivial on $\Push(x)$.
Using Lemma~\ref{lem:aut.imp.undist} we see that $x$ would be undistorted in the simple loops norm which
contradicts Theorem~\ref{thm:surface.main}.
\end{remark}

\section{$\Aut(\Bo F_2)$-invariant quasimorphisms on $\Bo F_2$.} 

Let $S_g$ be a closed surface of genus $g$. 
Let $\circ,\star \in S_g$ be two arbitrary points.
We shall regard $\circ$ as a puncture and $\star$ as a base point.
Let us consider the group $\MCG(S_g\backslash\{\circ\},\star)$ 
of mapping classes of $S_g\backslash\{\circ\}$ fixing the point $\star$. 

The natural action of a homeomorphism on $\Bo F_{2g}=\pi_1(S_g\backslash \{\circ\},\star)$ induces a map
$$\pi\colon\MCG(S_g\backslash\{\circ\},\star)\to\Aut(\Bo F_{2g}).$$ 
We claim that this map is injective. 
Indeed, the surface $S_g\backslash\{\circ\}$ can be described 
as a regular $4g$-gon with opposite edges identified, such that
the point $\circ$ lies in the center and $\star$ is one of the vertices. 
If $\pi(\psi)$ is the identity for some $\psi~\in~\MCG(S_g\backslash\{\circ\},\star)$, 
then there exists a representation of $\psi$ fixing pointwise the edges of the $4g$-gon.
Thus we can regard this representation of $\psi$ as a homeomorphism of the punctured disc fixing the boundary. 
By the Alexander trick (\cite[Lemma~2.1]{fm}) such an element is isotopic to the identity. 

In the next theorem we consider non-simple element $x\in\Bo F_{2g}$ 
such that for every $\psi\in\MCG(S_g\backslash\{\circ\},\star)$ we have $x^{-1}\neq\psi(x)$.
We postpone the proof of the existence of such elements to the next section (see Lemma~\ref{lem:not.inverted}).  

\begin{theorem}\label{thm:F.inv}
Let $x\in\Bo F_{2g}$ such that for every $\psi\in\MCG(S_g\backslash \{\circ\},\star)$ we have $x^{-1}\neq\psi(x)$, 
and $x$ cannot be represented by a simple loop in $S_g\backslash\{\circ\}$.
Then there exists a non-trivial $\MCG(S_g\backslash \{\circ\},\star)$-invariant 
homogeneous quasimorphism on $\Bo F_{2g}$ which is non-zero on $x$. 
\end{theorem}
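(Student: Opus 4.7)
The plan is to produce a homogeneous quasimorphism $Q$ on $M := \MCG(S_g \setminus \{\circ\}, \star)$ which is non-zero on $Push(x)$, and then set $q := Q \circ Push$. Since homogeneous quasimorphisms on $M$ are class functions and since $Push(\psi(w)) = \psi \cdot Push(w) \cdot \psi^{-1}$ for $\psi \in M$ and $w \in \Bo F_{2g}$, the computation
$$q(\psi(w)) = Q(Push(\psi(w))) = Q(\psi \cdot Push(w) \cdot \psi^{-1}) = Q(Push(w)) = q(w)$$
shows that $q$ is automatically $M$-invariant on $\Bo F_{2g}$, and $q(x) = Q(Push(x)) \neq 0$. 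So the problem reduces to constructing $Q$ on $M$.

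To obtain $Q$, I would first apply Theorem \ref{thm:gen.kra} to the punctured surface $S_g \setminus \{\circ\}$. Since $x$ is not represented by a simple loop, the Nielsen--Thurston decomposition of $Push(x) \in M$ is either a single pseudo-Anosov component (when $x$ is not a power of a simple element) or a product $T_{\gamma^+}^n T_{\gamma^-}^{-n}$ of Dehn twists along push-offs of a simple loop $\gamma$ (when $x$ is a proper power of a simple element). In both cases, the Bestvina--Bromberg--Fujiwara framework provides an isometric action of $M$ on a suitable projection complex on which $Push(x)$ acts loxodromically and satisfies WPD. Moreover, since $Push$ is injective and intertwines the $M$-action on $\Bo F_{2g}$ with conjugation in $M$, the hypothesis $\psi(x) \neq x^{-1}$ for all $\psi \in M$ is equivalent to $Push(x)$ not being $M$-conjugate to $Push(x)^{-1}$.

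Applying the Bestvina--Fujiwara counting construction to this hyperbolic action then yields a homogeneous quasimorphism $Q$ on the full group $M$ with $Q(Push(x)) \neq 0$, and the pullback via $Push$ finishes the proof. The main obstacle lies exactly in this last step: producing a BBF-type quasimorphism on all of $M$ rather than merely on a finite-index subgroup (as sufficed in the proof of Theorem \ref{thm:surface.main}). A BF counting quasimorphism, associated to a hyperbolic element $g$ of the ambient group acting on a hyperbolic space, is non-trivial on $g$ only when $g$ is not conjugate to $g^{-1}$ there --- otherwise the antisymmetry of homogeneous quasimorphisms would force vanishing. The hypothesis on $x$ is engineered precisely to eliminate this single obstruction for the element $Push(x)$, and thus allows us to realize the BBF-type construction directly on $M$ instead of having to pass to a finite-index subgroup and then average.
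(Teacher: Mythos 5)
Your proposal follows essentially the same route as the paper's proof: use the generalized Kra theorem to identify the Nielsen--Thurston type of $Push(x)$, translate the hypothesis $\psi(x)\neq x^{-1}$ (via injectivity of $Push$ and the identity $Push(\psi(w))=\psi\, Push(w)\,\psi^{-1}$) into $Push(x)$ not being conjugate to its inverse in $\MCG(S_g\backslash\{\circ\},\star)$, invoke the Bestvina--Bromberg--Fujiwara machinery to get a homogeneous quasimorphism on the full mapping class group that is non-zero on $Push(x)$, and pull back along $Push$. The only (minor) divergence is that you explicitly separate out the case where $x$ is a proper power of a simple element, whereas the paper asserts that non-simplicity yields a single pseudo-Anosov component and then cites the BBF result on pure elements; your extra case does not change the overall argument.
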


\begin{proof}
We consider the Birman embedding $\Bo F_{2g}\xrightarrow{\Push}\MCG(S_g\backslash\{\circ\},\star)$.
Since $x$ is not simple, it follows from Theorem~\ref{thm:gen.kra} that the Nielsen-Thurston 
decomposition of $\Push(x)$ consists of one non-trivial pseudo-Anosov pure component.
In addition, $\Push(x)$ is not conjugated to its inverse in the group $\MCG(S_g\backslash \{\circ\}, \star)$.
Indeed, if there is an element $\psi\in\MCG(S_g\backslash\{\circ\},\star)$ which conjugates $\Push(x)$ to its inverse, 
then it implies
$$\Push(\psi(x)) = \psi \Push(x) \psi^{-1} = \Push(x)^{-1} = \Push(x^{-1}).$$ 
Since $\Push$ is injective, we conclude that $\psi(x) = x^{-1}$ which contradicts our assumption. 

Now we use quasimorphisms constructed in \cite{BBF}. 
Note that if an element in a group is conjugated to its inverse, 
then every homogeneous quasimorphism vanishes on this element. 
If follows from \cite[Theorem~4.2]{BBF} that for pure elements of $\MCG(S_g\backslash\{\circ\},\star)$, 
being conjugated to its inverse is the only obstruction to be detected by homogeneous quasimorphisms.
Due to Lemma~\ref{lem:aut.imp.undist}, $\Push(x)$ is pure. 
Thus there exists a homogeneous quasimorphism $q$ on $\MCG(S_g\backslash\{\circ\},\star)$ which is non-zero on $\Push(x)$.
The pull-back of $q$ to $\Bo F_{2g}$ by $\Push$ gives us an $\MCG(S_g\backslash\{\circ\},\star)$-invariant 
quasimorphism which does not vanish on $x$.
\end{proof}

\begin{remark}
Let $n\geq 3$. One would like to construct $\Aut(\Bo F_n)$-invariant quasimorphism on $\Bo F_n$
by restricting a quasimorphism from $\Aut(\Bo F_n)$ to $\Bo F_n$ which is embedded in $\Aut(\Bo F_n)$ via inner automorphisms.
However, despite an extensive study of $\Aut(\Bo F_n)$  
it is not known if there are quasimorphisms on $\Aut(\Bo F_n)$ which restrict non-trivially to $\Bo F_n$.
\end{remark}

\begin{theorem}\label{thm:mcg.inf.dim}
The linear space of $\MCG(S_g\backslash\{\circ\},\star)$-invariant homogeneous quasimorphisms on $\Bo F_{2g}$ is infinite dimensional. 
\end{theorem}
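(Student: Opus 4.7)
The plan is to produce an infinite family of elements $x_1, x_2, \ldots \in \Bo F_{2g}$, each satisfying the hypotheses of Theorem~\ref{thm:F.inv}, whose images $Push(x_n) \in \MCG(S_g \setminus \{\circ\}, \star)$ are pairwise non-commensurable pseudo-Anosov classes. Given such a family, I would apply the Bestvina-Bromberg-Fujiwara axis-counting machinery to construct homogeneous quasimorphisms $q_n$ on the finite-index subgroup $H_\circ$ (from the proof of Theorem~\ref{thm:surface.main}) that separate these classes, then pull back via $Push$ to obtain infinitely many linearly independent $\MCG(S_g \setminus \{\circ\}, \star)$-invariant homogeneous quasimorphisms on $\Bo F_{2g}$.

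First I would invoke Lemma~\ref{lem:not.inverted} (proved in the next section) to fix one non-simple element $x \in \Bo F_{2g}$ with $\psi(x) \neq x^{-1}$ for every $\psi \in \MCG(S_g \setminus \{\circ\}, \star)$. To promote this to an infinite family, a natural approach is to perturb $x$ in a controlled way, for instance by setting $x_n := x \cdot w_n$ for a sequence of words $w_n$ of strictly increasing geometric complexity, or by applying distinct pseudo-Anosov iterates, arranging that $Push(x_n)$ have pairwise distinct dilatations (or pairwise distinct translation lengths on the curve complex of $S_g \setminus \{\circ\}$). Distinct stretch factors preclude commensurability in $\MCG(S_g \setminus \{\circ\}, \star)$, and non-simplicity is preserved if each $w_n$ is itself filling or if $x$ fills a large enough subsurface.

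Given such $x_n$, Theorem~\ref{thm:gen.kra} guarantees that each $Push(x_n)$ is a pure pseudo-Anosov. As in the proof of Theorem~\ref{thm:F.inv}, \cite[Theorem 4.2]{BBF} furnishes a homogeneous quasimorphism $q_n$ on $H_\circ$ with $q_n(Push(x_n)) \neq 0$; moreover, because these quasimorphisms arise from counting copies of a translation axis of a chosen element acting on a projection complex, pairwise non-commensurability of the axes lets one tune the $q_n$ so that $q_n(Push(x_m)^k) = 0$ for every $m \neq n$ and $k \in \Bo Z$. Setting $\tilde{q}_n(w) := q_n(Push(w))$, the conjugation computation of Lemma~\ref{lem:aut.imp.undist} shows that each $\tilde{q}_n$ is a homogeneous $\MCG(S_g \setminus \{\circ\}, \star)$-invariant quasimorphism on $\Bo F_{2g}$. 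Linear independence is then immediate: a relation $\sum_n c_n \tilde{q}_n = 0$ evaluated at $x_m$ collapses to $c_m\, q_m(Push(x_m)) = 0$, forcing $c_m = 0$.

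The main obstacle will be Step~1. The non-inversion hypothesis $\psi(x_n) \neq x_n^{-1}$ is delicate and not obviously stable under perturbation, so either a concrete combinatorial family must be exhibited and checked one-by-one (using, say, asymmetries in the cyclic word) or a genericity argument must be given. Ensuring pairwise non-commensurability of $Push(x_n)$, by contrast, reduces to producing elements whose associated pseudo-Anosovs have distinct computable invariants (such as dilatation or curve-complex translation length), which is standard. Once the family is in hand, the BBF separation of quasimorphisms and the pull-back and independence arguments are essentially formal.
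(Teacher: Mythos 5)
Your overall architecture matches the paper's: find infinitely many non-simple elements whose $Push$-images lie in pairwise distinct chiral classes, invoke the Bestvina--Bromberg--Fujiwara machinery (the paper uses \cite[Proposition~4.4]{BBF} to prescribe arbitrary values on finitely many such classes, directly on $\MCG(S_g\backslash\{\circ\},\star)$ rather than on a finite-index subgroup), pull back by $Push$, and read off linear independence. That part of your sketch is fine and essentially identical to the paper.

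The genuine gap is exactly the step you flag as ``the main obstacle'' and then do not carry out: you never actually produce the infinite family. The paper does this concretely with $x_k = a^{z_k}b^{2z_k}a^{3z_k}b^{4z_k}$, proving via the Whitehead algorithm (Lemmas~\ref{lem:permutation}, \ref{lem:whitehead.aut}, \ref{lem:not.inverted}) that each $x_k$ is not inverted by any automorphism, and via an abelianization argument (Lemma~\ref{lem:different.orbits}: automorphisms of $\Bo Z^n$ preserve the gcd of coordinates, and $gcd(4z_k,6z_k)=2\,gcd(z_k)$) that $x_k^i$ and $x_{k'}^j$ lie in different $\Aut$-orbits for $k\neq k'$ --- which is precisely what is needed to conclude that the $Push(x_k)$ represent distinct chiral classes. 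Your substitutes are not proofs: perturbing $x$ by words $w_n$ gives no control over the non-inversion condition (as you admit), and the claim that ``distinct stretch factors preclude commensurability'' is false as stated --- if $\lambda_1=\lambda_2^2$ then $\phi_1$ and $\phi_2^2$ share a dilatation, so distinct dilatations rule out conjugacy of the elements themselves but not of their powers; you would need multiplicative independence of the dilatations, which you also do not establish. Since the entire theorem rests on exhibiting such a family, the proposal as written does not constitute a proof. Note also that the separation criterion the paper actually uses is purely algebraic (distinct $\Aut$-orbits of all nonzero powers, detected by a gcd in homology), which is far more tractable than any dynamical invariant of the associated pseudo-Anosov maps.
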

\begin{proof}
It follows from Lemma~\ref{lem:different.orbits} and Lemma~\ref{lem:not.inverted} that there is an infinite sequence $z_1,z_2,\ldots$ 
of integers such that the elements $x_k = a^{z_k}b^{2z_k}a^{3z_k}b^{4z_k}$ have the following properties:

\begin{enumerate}[leftmargin=0.5cm,itemindent=.5cm,labelwidth=\itemindent,labelsep=0cm,align=left,label=\alph*)]
\item $x_k$ and $x_k^{-1}$ belong to different $\Aut(\Bo F_{2g})$-orbits, 
\item for $i,j \in \Bo Z \backslash\{0\}$ and $k \neq k'$, elements $x_k^i$ and $x_{k'}^j$ belong to different $\Aut(\Bo F_{2g})$-orbits.
\end{enumerate}

Elements $x_k$ are not simple. 
Indeed, every element $x\in\Bo F_{2g}$ that can be represented by a simple loop in $\MCG(S_g\backslash\{\circ\},\star)$
is a primitive element of $\Bo F_{2g}$, and hence is inverted by some automorphism of $\Bo F_{2g}$. 

Recall that by Theorem~\ref{thm:gen.kra}, $\Push(x_k)$ is a pure mapping class, 
whose Nielsen-Thurston decomposition consists only of one pseudo-Anosov component. 
Moreover, using an argument from the proof of Theorem~\ref{thm:F.inv}, 
property a) implies that the element $\Push(x_k)$ is not conjugated to its inverse in the mapping class group, 
and property b) implies that for $k \neq k'$ and any non-zero $i$ and $j$, $\Push(x_k)^i$ is not conjugated to $\Push(x_{k'})^j$. 
In the language of chiral and achiral classes introduced in \cite{BBF}, it means that the elements $\Push(x_k)$ 
represent different chiral classes for different $k$. 
Let $k\in\Bo N$. It follows from \cite[Proposition~4.4]{BBF} that each function 
$$\{\Push(x_1),\ldots,\Push(x_k)\} \to\Bo R$$
is a restriction of a homogeneous quasimorphism on $\MCG(S_g\backslash\{\circ\},\star)$.
If we pull-back these quasimorphisms to $\Bo F_{2g}$ by $\Push$, we obtain that 
each function 
$$\{x_1,\ldots,x_k\} \to \Bo R$$ 
is a restriction of some $\MCG(S_g\backslash\{\circ\},\star)$-invariant homogeneous quasimorphism.
Consequently, for each $k\in\Bo N$ we constructed a $k$-dimensional subspace 
of $\MCG(S_g\backslash\{\circ\},\star)$-invariant homogeneous quasimorphisms.
\end{proof}

For $g>1$, $\pi(\MCG(S_g\backslash\{\circ\},\star))$ has infinite index in $\Aut(\Bo F_{2g})$.
The situation is different for $g=1$. For completeness we give a proof of the following lemma.
 
\begin{lemma}\label{lem:F2.mcg.in.aut}
$\pi(\MCG(S_1\backslash \{\circ\},\star))$ has index $2$ in $\Aut(\Bo F_2)$.
\end{lemma}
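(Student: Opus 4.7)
The plan is to fit $\pi$ into a commutative ladder comparing the Birman exact sequence for $(S_1\setminus\{\circ\},\star)$ with the standard inner--outer automorphism sequence for $\Bo F_2$, and then read off the index by a short diagram chase. The Birman exact sequence (applicable since $\chi(S_1\setminus\{\circ\})=-1<0$) reads
\[
1 \to \Bo F_2 \xrightarrow{Push} \MCG(S_1\setminus\{\circ\},\star) \xrightarrow{F} \MCG(S_1\setminus\{\circ\}) \to 1,
\]
where $\pi_1(S_1\setminus\{\circ\},\star)\cong \Bo F_2$. Two classical identifications enter: the action on $H_1\cong\Bo Z^2$ gives $\MCG(S_1\setminus\{\circ\})\cong SL_2(\Bo Z)$, and Nielsen's theorem gives $\OP{Out}(\Bo F_2)\cong GL_2(\Bo Z)$ via the abelianization.

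I would then assemble these into the commutative diagram
\[
\begin{CD}
1 @>>> \Bo F_2 @>{Push}>> \MCG(S_1\setminus\{\circ\},\star) @>{F}>> SL_2(\Bo Z) @>>> 1 \\
@. @VV{c}V @VV{\pi}V @VV{\iota}V @. \\
1 @>>> \OP{Inn}(\Bo F_2) @>>> \Aut(\Bo F_2) @>>> GL_2(\Bo Z) @>>> 1
\end{CD}
\]
in which $c\colon w\mapsto c_w$ is the canonical isomorphism (since $\Bo F_2$ has trivial center), $\pi$ is injective as shown in the paragraphs preceding the lemma, and $\iota$ is the natural inclusion, whose index in $GL_2(\Bo Z)$ is $2$. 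The left square commutes because, as already noted in the text, $Push(w)$ induces conjugation by $w$ on $\pi_1$; both compositions in the right square compute the action of the mapping class on $H_1(S_1\setminus\{\circ\})\cong\Bo Z^2$, so the right square commutes as well.

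With the diagram in place, a routine five-lemma-style chase finishes the argument. Given $\phi\in\Aut(\Bo F_2)$ whose image $[\phi]\in GL_2(\Bo Z)$ lies in $SL_2(\Bo Z)$, lift $[\phi]$ through the surjection $F$ to some $\widetilde\phi\in\MCG(S_1\setminus\{\circ\},\star)$; then $\pi(\widetilde\phi)^{-1}\phi$ is an inner automorphism $c_w=\pi(Push(w))$ for some $w\in\Bo F_2$, hence $\phi\in\OP{im}(\pi)$. Conversely, $\OP{im}(\pi)$ is contained in the preimage of $SL_2(\Bo Z)$ by commutativity of the right square. Therefore $\OP{im}(\pi)$ equals this preimage, whose index in $\Aut(\Bo F_2)$ is $[GL_2(\Bo Z):SL_2(\Bo Z)]=2$.

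The only step that is not purely formal is the commutativity of the right square, equivalently the claim that every orientation-preserving self-homeomorphism of $S_1\setminus\{\circ\}$ acts on $H_1\cong\Bo Z^2$ with determinant $+1$. This is the main point to check carefully: such a homeomorphism extends, after filling in the puncture, to an orientation-preserving self-homeomorphism of the closed torus $S_1$, whose induced action on $H_1(S_1)=\Bo Z^2$ preserves the intersection form and therefore lies in $SL_2(\Bo Z)$.
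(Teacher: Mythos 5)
Your argument is correct, but it is genuinely different from the one in the paper. The paper's proof is more hands-on: it extends $\pi$ to $\pi'\colon\MCG^{\pm}(S_1\backslash\{\circ\},\star)\to\Aut(\Bo F_2)$, proves surjectivity of $\pi'$ by exhibiting the three Nielsen generators of $\Aut(\Bo F_2)$ and realizing each by a (possibly orientation-reversing) homeomorphism of the punctured torus, and gets injectivity from the Alexander-trick argument given just before the lemma; the index $2$ then comes from $[\MCG^{\pm}:\MCG]=2$. You instead sandwich $\pi$ in a ladder between the Birman sequence and $1\to\OP{Inn}(\Bo F_2)\to\Aut(\Bo F_2)\to\OP{Out}(\Bo F_2)\to 1$, feeding in the identifications $\MCG(S_1\setminus\{\circ\})\cong SL_2(\Bo Z)$ and, crucially, Nielsen's theorem $\OP{Out}(\Bo F_2)\cong GL_2(\Bo Z)$; the diagram chase is then routine, and you correctly isolate the one non-formal point (determinant $+1$ for orientation-preserving classes). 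The trade-off: the paper's route uses only the generation of $\Aut(\Bo F_2)$ by elementary Nielsen automorphisms and needs no knowledge of $\OP{Out}(\Bo F_2)$, while your route imports the stronger classical isomorphism $\OP{Out}(\Bo F_2)\cong GL_2(\Bo Z)$ but in exchange identifies the image precisely as the preimage of $SL_2(\Bo Z)$, i.e.\ as $\Aut^+(\Bo F_2)$ --- a sharper statement that the paper asserts without proof immediately after the lemma and actually uses later. Both proofs are complete; yours proves slightly more at the cost of a slightly heavier input.
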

\begin{proof}
Let us extend the group $\MCG(S_1\backslash \{\circ\},\star)$ to $\MCG^{\pm}(S_1\backslash \{\circ\}, \star)$
by allowing orientation reversing homeomorphisms.
Then $\pi$ naturally extends to the map $\pi'\colon\MCG^{\pm}(S_1\backslash \{\circ\},\star) \to\Aut(\Bo F_2)$.
By the same argument as in the beginning of this chapter, $\pi'$ is injective. 
Let $\{a,b\}$ be a basis of $\Bo F_2$.
The group $\Aut(\Bo F_2)$ is generated by the following automorphisms:
$$
\begin{tikzcd}[row sep = 0ex,
/tikz/column 1/.append style={anchor=base west},
/tikz/column 2/.append style={anchor=base west},
/tikz/column 3/.append style={anchor=base west},
/tikz/column 4/.append style={anchor=base west}]
a \to a^{-1} &\hspace{2pt}a \to b &\hspace{2pt}a \to ab\\ 
b \to b      &\hspace{2pt}b \to a &\hspace{2pt}b \to b. 
\end{tikzcd}
$$
Each one of them can be realized by an element of $\MCG^{\pm}(S_1\backslash \{\circ\},\star)$.
Thus $\pi'$ is onto. 
\end{proof}

Every automorphism of $\Bo F_n$ acts on its abelianisation which is isomorphic to $\Bo Z^n$, 
thus defines a matrix over $\Bo Z$. 
Let $\Aut^+(\Bo F_n)$ be the subgroup consisting of all elements which define matrices of determinant $1$. 
In the case of $n=2$, we have $\Aut^+(\Bo F_2)=\pi(\MCG(S_1\backslash\{\circ\},\star))$. 
An immediate consequence of Theorem~\ref{thm:mcg.inf.dim} is that the linear space of homogeneous 
$\Aut^+(\Bo F_2)$-invariant quasimorphisms is infinite dimensional. 
\begin{remark}
The fact that the space of homogeneous $\Aut^+(\Bo F_2)$-invariant quasimorphisms on $\Bo F_2$ is non-trivial 
was recently proved in his thesis by Huber in \cite{Huber}. 
He showed that certain rotation number quasimorphism is $\Aut^+(\Bo F_2)$-invariant.
\end{remark}

In the next corollary we improve this result to $\Aut(\Bo F_2)$-invariant homogeneous quasimorphisms.

\begin{corollary}\label{cor:F_2.inv}
The linear space of homogeneous $\Aut(\Bo F_2)$-invariant quasimorphisms on $\Bo F_2$ is infinite dimensional.
\end{corollary}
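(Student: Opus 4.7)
The plan is to deduce this corollary from Theorem~\ref{thm:mcg.inf.dim} by averaging over the two-element quotient $\Aut(\Bo F_2)/\Aut^+(\Bo F_2)$. By Theorem~\ref{thm:mcg.inf.dim} combined with Lemma~\ref{lem:F2.mcg.in.aut}, the space $V$ of $\Aut^+(\Bo F_2)$-invariant homogeneous quasimorphisms on $\Bo F_2$ is already infinite-dimensional, so it suffices to upgrade invariance by a factor of $2$.

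Fix an involution $\sigma \in \Aut(\Bo F_2) \setminus \Aut^+(\Bo F_2)$, for instance $\sigma(a) = a^{-1}$, $\sigma(b) = b$. Since $\Aut^+(\Bo F_2)$ is normal in $\Aut(\Bo F_2)$, pre-composition $A(q) = q \circ \sigma$ is a well-defined involution on $V$, and its $+1$-eigenspace $V^+ \subseteq V$ is precisely the space of $\Aut(\Bo F_2)$-invariant homogeneous quasimorphisms (as $\Aut(\Bo F_2)$ is generated by $\Aut^+(\Bo F_2)$ and $\sigma$). The plan is to show that the symmetrization $P \colon V \to V^+$, $P(q) = \tfrac{1}{2}(q + q \circ \sigma)$, has infinite-dimensional image by producing $N$ linearly independent elements of $V^+$ for every $N$.

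The main step is to revisit the construction behind Theorem~\ref{thm:mcg.inf.dim}. That proof selects elements $x_1, \ldots, x_N \in \Bo F_2$ whose $Push$-images lie in pairwise distinct chiral classes in the finite-index subgroup $H_\circ$, and then invokes \cite[Proposition~4.4]{BBF} to obtain $q_1, \ldots, q_N \in V$ with $q_k(x_j) = \delta_{jk}$. I would refine the choice of the $x_k$'s so that the doubled collection $\{x_k\}_{k \leq N} \cup \{\sigma(x_k)\}_{k \leq N}$ still maps to $2N$ pairwise distinct chiral classes in $H_\circ$. A second application of \cite[Proposition~4.4]{BBF} to the enlarged set then produces $q_k \in V$ satisfying both $q_k(x_j) = \delta_{jk}$ and $q_k(\sigma(x_j)) = 0$, so that $P(q_k)(x_j) = \tfrac{1}{2}\delta_{jk}$, giving $N$ linearly independent elements in $V^+$.

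The main obstacle is verifying the enlarged chiral-disjointness. Since $Push$-conjugation in $H_\circ$ corresponds to the $\Aut^+(\Bo F_2)$-action on $\Bo F_2$, the condition that $Push(\sigma(x_k))$ shares a chiral class with $Push(x_j)^{\pm 1}$ translates into $x_k$ lying in the non-identity coset $(\Aut(\Bo F_2) \setminus \Aut^+(\Bo F_2)) \cdot x_j^{\pm 1}$, equivalently $x_k \in \Aut(\Bo F_2) \cdot x_j^{\pm 1}$. For $k \neq j$ this is excluded by property b) of Theorem~\ref{thm:mcg.inf.dim}, and for $k = j$ with the inverse it is excluded by property a). The only remaining new constraint is that no orientation-reversing automorphism stabilize $x_k$; since the $\Aut^+(\Bo F_2)$-stabilizer of a generic element $x_k = a^{z_k} b^{2z_k} a^{3z_k} b^{4z_k}$ consists only of inner automorphisms (which are orientation-preserving), this condition holds generically and can be imposed as an additional mild avoidance condition when inductively selecting the sequence $z_k$ via Lemma~\ref{lem:different.orbits} and Lemma~\ref{lem:not.inverted}.
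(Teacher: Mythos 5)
Your overall strategy coincides with the paper's: both pass to the infinite-dimensional space of $\Aut^+(\Bo F_2)$-invariant homogeneous quasimorphisms supplied by Theorem~\ref{thm:mcg.inf.dim}, symmetrize over a fixed orientation-reversing $\sigma$, and reduce the whole problem to showing that the doubled family $\{x_k\}\cup\{\sigma(x_k)\}$ realizes pairwise distinct chiral classes under $Push$, so that \cite[Proposition~4.4]{BBF} applies. Your bookkeeping for $k\neq j$, and for $\sigma(x_k)$ against $x_k^{-1}$, matches the paper's Lemma~\ref{lem:different.orbits2}, and your $P=\tfrac12(q+q\circ\sigma)$ is the paper's $Sym_{\sigma}$ up to normalization.

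The genuine gap is the case you dismiss as a ``mild avoidance condition'': showing that $x_k$ and $\sigma(x_k)$ lie in different $\Aut^+(\Bo F_2)$-orbits, equivalently that $Stab(x_k)$ contains no orientation-reversing automorphism. Your justification --- that the $\Aut^+(\Bo F_2)$-stabilizer of a generic $x_k$ consists of inner automorphisms, which preserve orientation --- is a non sequitur: an orientation-reversing element of $Stab(x_k)$ lies by definition outside $\Aut^+(\Bo F_2)$, so information about $Stab(x_k)\cap\Aut^+(\Bo F_2)$ cannot rule it out; a priori the full stabilizer could be an index-two extension of its orientation-preserving part. Nor is ``genericity'' established anywhere: one needs an actual argument for the chosen $z_k$. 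This is exactly the content of the paper's Lemma~\ref{lem:fliping}, which computes a generating set of $Stab(\xoverline{x}_k)$ via McCool's theorem together with the peak-reduction analysis of Lemmas~\ref{lem:permutation} and~\ref{lem:whitehead.aut}, and checks that every generator is orientation-preserving. Without this input (or a substitute), your symmetrized quasimorphisms are not known to be nonzero on the $x_k$ and the linear-independence argument collapses. A smaller imprecision: membership in a common chiral class means some nonzero powers are conjugate, not merely the $\pm1$ powers; this is handled by the ``not a proper power'' reduction in Lemma~\ref{lem:different.orbits} and should be invoked explicitly.
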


\begin{proof}
Let $a$ and $b$ be generators of $\Bo F_2$. 
Denote by $\sigma$ the automorphism defined by $\sigma(a) = a^{-1}, \sigma(b) = b$. 
Let $\{z_i\}_{i=1}^\infty$ be a sequence of integers such that $\OP{gcd}(z_i)\neq \OP{gcd}(z_j)$ for $i \neq j$. 
Let $x_k = a^{z_k}b^{2z_k}a^{3z_k}b^{4z_k}$. 
Consider the set 
$$X:=\{x_1, \sigma(x_1), x_2, \sigma(x_2), \ldots \}\subset\Bo F_2.$$
It follows from Lemma~\ref{lem:not.inverted} that no $x_k$ is inverted by an automorphism of $\Bo F_2$. 
The same applies to elements $\sigma(x_k)$. It means that each $\Push(\sigma(x_k))$ is chiral. 

For every $x,y\in X$, the elements $\Push(x)$ and $\Push(y)$ define 
different chiral classes in $\MCG(S_1\backslash\{\circ\},\star)$, see Lemma~\ref{lem:different.orbits2}.
It follows from \cite[Proposition~4.4]{BBF} that each function 
$$\{\Push(x_1),\Push(\sigma(x_1)),\ldots,\Push(x_k),\Push(\sigma(x_k))\}\to\Bo R$$ 
is a restriction of a homogeneous quasimorphism on $\MCG(S_1\backslash\{\circ\},\star)$. 
By pulling back to $\Bo F_2$, we obtain that every function 
$$\{x_1,\sigma(x_1),\ldots,x_k,\sigma(x_k)\}\to\Bo R$$ 
is a restriction of a homogeneous $\Aut^+(\Bo F_2)$-invariant quasimorphism.

Let $Q(\Bo F_2)^{\Aut^+}$ be the space of homogeneous $\Aut^+(\Bo F_2)$-invariant quasimorphisms on $\Bo F_2$. 
Let $X_k = \{x_1,\sigma(x_1),\ldots,x_k,\sigma(x_k)\}$. 
Denote by $\Bo R^{X_k}$ the set of all functions from $X_k$ to $\Bo R$.
We have the following commutative diagram:
$$
\begin{tikzcd}
Q(\Bo F_2)^{\Aut^+} \arrow{r}{\Sym_{\sigma}} \arrow[two heads]{d} & Q(\Bo F_2)^{\Aut^+} \arrow[two heads]{d}\\
\Bo R^{X_k} \arrow{r}{\widetilde{\Sym}_{\sigma}} & \Bo R^{X_k}, 
\end{tikzcd}
$$
where 
$$\Sym_{\sigma}(q)(x) = q(x) + q(\sigma(x)),$$ 
for $q\in\bar{Q}(\Bo F_2)^{\Aut^+}$. 
We claim that $\Sym_{\sigma}(q)$ is an $\Aut(\Bo F_2)$-invariant homogeneous quasimorphism. 
It is clear that $\Sym_{\sigma}(q)$ is a quasimorphism, because $q$ is a quasimorphism and $\sigma$ is an 
automorphism of $\Bo F_2$. 
To prove the $\Aut(\Bo F_2)$ invariance, we first note that 
$\Sym_{\sigma}(q)$ is $\sigma$-invariant, which is obvious from the definition.
Moreover, for every $\psi \in\Aut^+(\Bo F_2)$ we can 
find $\psi' \in\Aut^+(\Bo F_2)$ such that $\psi'\sigma = \sigma\psi$. Now
\begin{align*}
\Sym_{\sigma}(q)(\psi(x))&=q(\psi(x))+q(\sigma\psi(x))=q(x)+q(\psi'\sigma(x))\\
&=q(x)+q(\sigma(x))=\Sym_{\sigma}(q)(x).
\end{align*}
Thus the quasimorphism $\Sym_{\sigma}(q)$ is $\sigma$-invariant 
and $\Aut^+(\Bo F_2)$-invariant, and consequently $\Aut(\Bo F_2)$-invariant.

The map $\widetilde{\Sym}_{\sigma}$ is defined by $\widetilde{\Sym}_{\sigma}(f)(x) = f(x) + f(\sigma(x))$.
The vertical epimorphisms in the above diagram are restrictions. 
We have that 
$$\OP{Im}(\widetilde{\Sym}_{\sigma}) = \{ f \in \Bo R^{X_k} \colon f(x_l)=f(\sigma(x_l))\,\text{for each }\thinspace l\in\{1,\ldots,k\}\}$$
is a $k$-dimensional linear space. Each element of $\OP{Im}(\widetilde{\Sym}_{\sigma})$ is a restriction of $\Sym_{\sigma}(q)$ for some
$q\in Q(\Bo F_2)^{\Aut^+}$, which is an $\Aut(\Bo F_2)$-invariant homogeneous quasimorphism.
Thus for each $k\in\Bo N$ the space of $\Aut(\Bo F_2)$-invariant homogeneous quasimorphisms on $\Bo F_2$ contains
a $k$-dimensional subspace. 
\end{proof}

\begin{remark}\label{R:inf-not-Brooks}
In his thesis Hase \cite{Hase-2, Hase-1} proved that the space of 
quasimorphisms on $\Bo F_2$ that are not $\Aut(\Bo F_2)$-invariant  is dense 
in the space of all homogeneous quasimorphisms on $\Bo F_2$. 
In particular, finite linear combinations of counting quasimorphisms are not $\Aut(\Bo F_2)$-invariant. 
Hence a rotation number quasimorphism considered by Huber in \cite{Huber} 
is not $\Aut(\Bo F_2)$-invariant, since it is a linear combination of counting quasimorphisms.

It follows from Corollary~\ref{cor:F_2.inv}, 
that there exists an infinite dimensional space of quasimorphisms on $\Bo F_2$ where each quasimorphism 
can not be expressed as a finite linear combination of counting quasimorphisms. 
On the other hand, Grigorchuk \cite{grigo} showed that every quasimorphism 
is a linear combination of (possibly infinitely many) counting quasimorphisms. 
\end{remark}

\section{Whitehead algorithm and some elements of $\Bo F_n$.}
J.~H.~C.~Whitehead \cite{wh2} described an algorithm that, given two elements $x,x'\in\Bo F_n$, 
it decides if there is $\psi\in\Aut(\Bo F_n)$ for which $\psi(x)=x'$. 
Below we recall the Whitehead algorithm. 

\begin{definition}
Let $X$ be a basis of $\Bo F_n$. An element $\psi\in\Aut(\Bo F_n)$ is called
\begin{enumerate}[leftmargin=0.5cm,itemindent=.5cm,labelwidth=\itemindent,labelsep=0cm,align=left,label=\alph*)]
\item  \textbf{Permutation automorphism} if $\psi$ permutes the set $X \cup X^{-1}$.
\item \textbf{Whitehead automorphism} if there is an element $a \in X \cup X^{-1}$ such 
that $\psi(a)=a$ and $\psi(x)\in\{x,ax,xa^{-1},axa^{-1} \}$ for each $x\in X\backslash\{a\}$.
\end{enumerate}
\end{definition}

Let $\Omega_n$ denote the set of all permutation automorphisms. 
The set $\Omega_n$ is a finite subgroup of $\Aut(\Bo F_n)$ 
which is isomorphic to the extended permutation group. 

\begin{theorem}[Whitehead]\label{thm:whitehead.alg}
Let $x\in\Bo F_n$ and let $m=\min\{|\psi(x)|_X\}$ where the minimum is taken over all $\psi\in\Aut(\Bo F_n)$. 
If $|x|_X>m$, then there exists a Whitehead automorphism $h$ such that $|h(x)|_X<|x|_X$. 
If $x$ and $x'$ are in the same $\Aut$-orbit and $|x|_X=|x'|_X=m$, 
then there exists a sequence of permutation and Whitehead automorphisms $t_1,\ldots,t_l$ such that:
\begin{enumerate}[leftmargin=0.5cm,itemindent=.5cm,labelwidth=\itemindent,labelsep=0cm,align=left,label={}]
\item $t_l\ldots t_1(x) = x'$ and
\item $|x|_X = |t_1(x)|_X = |t_2t_1(x)|_X = \ldots = |t_l\ldots t_1(x)|_X = m.$
\end{enumerate}
\end{theorem}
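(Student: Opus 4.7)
The plan is to follow Whitehead's classical argument via the weighted Whitehead graph, which is the natural combinatorial object for controlling length under automorphisms. I would first reduce to cyclically reduced words: if $x$ is not cyclically reduced, a suitable inner automorphism strictly reduces $|x|_X$ and can itself be written as a composition of Whitehead automorphisms, which already yields the first assertion in this case. So we may assume both $x$ and $x'$ are cyclically reduced, and hence that $|x|_X$ equals the cyclic length.

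For cyclically reduced $x$, a direct count gives Whitehead's length-change formula expressing $|\sigma_{A,a}(x)|_X - |x|_X$ in terms of the number of edges of $Wh_X(x)$ crossing the cut defined by the multiplier set $A$, corrected by a term measuring the valence at $a$. Using this, the first assertion reduces to showing: if some $\psi \in \Aut(\Bo F_n)$ strictly decreases the length of $x$, then some cut $(A,a)$ has negative contribution. This is proved by decomposing $\psi$ into Whitehead and permutation automorphisms (Nielsen's classical generation theorem) and averaging the length-change formula over the family of cuts one can build from $\psi$; the averaging forces at least one cut to contribute negatively, hence to yield a single length-reducing Whitehead automorphism.

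For the second assertion, the central ingredient is the peak reduction lemma: if $\psi_1,\psi_2$ are Whitehead or permutation automorphisms with $|\psi_2\psi_1(x)|_X \leq |x|_X < |\psi_1(x)|_X$, then the pair can be replaced by another finite sequence of Whitehead and permutation automorphisms whose intermediate values of $|\cdot|_X$ never exceed $|x|_X$. Granting peak reduction, any $\psi \in \Aut(\Bo F_n)$ with $\psi(x) = x'$ (both at minimum length $m$) can first be written as a product of Whitehead and permutation automorphisms; iteratively removing peaks then yields a sequence $t_1,\ldots,t_l$ with non-increasing partial lengths, and since the start and end are already at the minimum, every intermediate length equals $m$.

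The main obstacle is the peak reduction lemma. Its proof is a lengthy case analysis on how the multiplier pairs $(A_1, a_1)$ and $(A_2, a_2)$ of $\psi_1$ and $\psi_2$ sit relative to each other: whether $a_1 = a_2^{\pm 1}$, whether $a_i \in A_j$, and how the intersections $A_1 \cap A_2$, $A_1 \cap A_2^c$, $A_1^c \cap A_2$ behave. Each configuration admits an explicit rewriting into Whitehead and permutation automorphisms that avoids the peak, verifiable through the length-change formula. The bookkeeping is tedious but the number of cases is finite, and no case uses anything beyond the formula and elementary combinatorics of the Whitehead graph.
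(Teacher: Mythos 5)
The paper does not prove this statement at all: it is quoted as a classical theorem of Whitehead, with a citation to \cite{wh2}, and is used as a black box in Lemmas~\ref{lem:not.inverted} and~\ref{lem:fliping}. So there is no proof in the paper to compare against; what you have written is an outline of the standard Whitehead--Higgins--Lyndon--Rapaport argument, and as an outline it is sound. Two remarks. First, the entire technical content of the theorem is the peak reduction lemma, which you state and whose proof you describe only as a finite case analysis on the configurations of the multiplier pairs $(A_1,a_1)$, $(A_2,a_2)$; since everything else in your argument is routine, deferring this is deferring the proof, and for a result of this vintage that is acceptable only because the case analysis genuinely is in the literature. Second, your extraction of the first assertion is glossed too quickly: after peak reduction one gets a factorization $\psi=\tau_k\cdots\tau_1$ in which the length first strictly drops at some step $i$, but the moves $\tau_1,\dots,\tau_{i-1}$ preceding that drop need not all be permutation automorphisms (they may be length-preserving Whitehead automorphisms of the second kind), so one cannot simply conjugate $\tau_i$ back to obtain a single Whitehead automorphism decreasing $|x|_X$; the classical proof arranges the induction so that this is handled, and your ``averaging over cuts'' sentence does not make clear that you have this point under control. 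Also, in your reduction to the cyclically reduced case, note that conjugation by a single letter is itself a single Whitehead automorphism (take the full multiplier set), so no ``composition'' is needed there. None of this is a wrong turn, but the proposal is a roadmap to the standard proof rather than a proof.
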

 
Let $x\in\Bo F_n$. We denote by $\xoverline{x}$ the conjugacy class represented by $x$. 
If $c$ is any conjugacy class, we define its length by 
$$|c|_X:=\min\{|x|_X,\thinspace\xoverline{x}=c\}.$$ 
Note that $\Aut(\Bo F_n)$ acts on conjugacy classes of $\Bo F_n$. 
It is easy to see, that the analogous version of Whitehead algorithm 
works for conjugacy classes and the norm defined above. 

In the following lemmas we consider a sequence $\{x_k\}_{k=2}^\infty$ where $x_k\in \Bo F_n$ is of the form  
$$x_k = a^kb^{2k}a^{3k}b^{4k}.$$
Here the elements $a$ and $b$ denote two different generators of~$\Bo F_n$. 

\begin{lemma}\label{lem:permutation}
Let $n=2$ and $X = \{a,b\}$.
Let $t$ be a permutation automorphism. 
Then $t(\xoverline{x_k}) = \xoverline{x_k}$ if and only if $t$ is the identity, i.e., 
the group $\Omega_2$ acts freely on the orbit $\Omega_2(\xoverline{x_k})$.  
\end{lemma}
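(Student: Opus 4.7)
The plan is to pass to the abelianization $\pi\colon\B F_2\to\B Z^2$ defined by $a\mapsto(1,0)$ and $b\mapsto(0,1)$. Since $\B Z^2$ is abelian, $\pi$ factors through conjugacy classes, so whenever $t(\xoverline{x_k})=\xoverline{x_k}$ one has $\pi(t(x_k))=\pi(x_k)$. A direct computation gives $\pi(x_k)=(k+3k,\,2k+4k)=(4k,6k)$. It therefore suffices to show that the only element of $\Omega_2$ whose induced linear action on $\B Z^2$ fixes the vector $(4k,6k)$ is the identity.

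Every $t\in\Omega_2$ has the form $a\mapsto z_1^{s_1}$, $b\mapsto z_2^{s_2}$ with $\{z_1,z_2\}=\{a,b\}$ and $s_1,s_2\in\{\pm 1\}$, so it acts on $\B Z^2$ by a $2\times 2$ signed permutation matrix. I would split into two cases. In the non-swapping case $a\mapsto a^{s_1}$, $b\mapsto b^{s_2}$, the induced action sends $(4k,6k)$ to $(s_1\cdot 4k,\,s_2\cdot 6k)$, which equals $(4k,6k)$ only when $s_1=s_2=1$, i.e.\ only for the identity. In the swapping case $a\mapsto b^{s_2}$, $b\mapsto a^{s_1}$, the induced action sends $(4k,6k)$ to $(s_1\cdot 6k,\,s_2\cdot 4k)$; for this to equal $(4k,6k)$ one would need $|4k|=|6k|$, which fails for $k\geq 2$.

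There is no real obstacle here: the exponent pattern $(k,2k,3k,4k)$ in the definition of $x_k$ was chosen precisely so that the abelian images of $a$ and $b$, namely $4k$ and $6k$, have distinct absolute values, and this is exactly what is needed to kill the swapping automorphisms. A cyclic-word analysis of $a^kb^{2k}a^{3k}b^{4k}$ would also work but would require enumerating all seven non-trivial elements of $\Omega_2$ and comparing cyclic permutations, whereas the abelianization obstruction rules them out uniformly.
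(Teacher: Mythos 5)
Your proof is correct, but it takes a genuinely different route from the paper's. The paper argues directly on the level of cyclic words: $t(\xoverline{x_k})$ is represented by $u^k v^{2k} u^{3k} v^{4k}$ with $u=t(a)$, $v=t(b)\in X\cup X^{-1}$, and such a word is conjugate to $a^k b^{2k} a^{3k} b^{4k}$ only when $u=a$ and $v=b$; the verification of that last step (comparing cyclically reduced words up to cyclic permutation over all seven non-trivial choices of $(u,v)$) is asserted rather than spelled out. You instead pass to the abelianization, where conjugation dies, $\pi(t(x_k))=Ab(t)(\pi(x_k))$, and $\Omega_2$ acts faithfully by signed permutation matrices; the point is then simply that no non-trivial signed permutation matrix fixes $(4k,6k)$ because $4k$ and $6k$ are positive and have distinct absolute values. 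Both steps of your case analysis check out, and since a permutation automorphism of $\Bo F_2$ is determined by its matrix, $Ab(t)=I$ does force $t=\mathrm{id}$. What your approach buys is a uniform, purely linear-algebraic obstruction in place of a combinatorial comparison of cyclic words, and it dovetails with the $\gcd(4k,6k)$ computation the paper itself uses in Lemma~\ref{lem:different.orbits}. What the paper's direct argument buys is independence from the abelianization: it sees the full cyclic word, so it would survive a choice of exponents whose sums fail to distinguish the generators, and it is the same style of reasoning reused in Lemma~\ref{lem:whitehead.aut}, where abelianization alone would no longer suffice.
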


\begin{proof}
The conjugacy class $t(\xoverline{x_k})$ is represented by the element 
of the form $u^kv^{2k}u^{3k}v^{4k}$ for some $u,v\in\{a,b,a^{-1},b^{-1}\}$. 
This element represents conjugacy class of $a^kb^{2k}a^{3k}b^{4k}$ if and only if $u=a$ and $v=b$. 
It means that $t(a)=a$ and $t(b)=b$. Thus $t$ is the identity. 
\end{proof}

\begin{lemma}\label{lem:whitehead.aut}
Let $\psi$ be a Whitehead automorphism and let $\xoverline{x}\in\Omega_n(x_k)$. 
Then either $\psi(\xoverline{x}) = \xoverline{x}$ or $|\psi(\xoverline{x})|_X>|\xoverline{x}|$. 
\end{lemma}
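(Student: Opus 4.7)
The plan is to first reduce to the case $\xoverline{x} = \xoverline{x_k}$ and then enumerate Whitehead automorphisms by their fixed letter. For the reduction, pick $\tau \in \Omega_n$ with $\tau(\xoverline{x_k}) = \xoverline{x}$; since conjugation by a permutation automorphism carries Whitehead automorphisms to Whitehead automorphisms (the new fixed letter being $\tau^{-1}(e)$, with $e$ the fixed letter of $\psi$) and $\tau$ preserves length, it suffices to prove the statement for $\xoverline{x_k}$ itself.

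With that done, I would split into two regimes according to the fixed letter $e$. In the first regime, $e$ is a generator (or its inverse) that does not appear in $x_k$. No inserted $e^{\pm 1}$-letter can cancel with an $a$ or $b$ of $x_k$, so writing $\psi(a) = e^{\alpha_a} a e^{-\beta_a}$ and $\psi(b) = e^{\alpha_b} b e^{-\beta_b}$ with $\alpha_\ast,\beta_\ast \in \{0,1\}$, a short computation gives
$$|\psi(\xoverline{x_k})|_X = 10k + \sum_{(u,v)} |\alpha_v - \beta_u|,$$
summed over cyclically consecutive pairs of letters in $x_k$. All four pair-types $(a,a)$, $(a,b)$, $(b,b)$, $(b,a)$ occur, so the sum vanishes only when $\alpha_a = \beta_a = \alpha_b = \beta_b$. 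The common value $0$ gives the identity on $\{a,b\}$ and $1$ gives the inner automorphism by $e$ on $\{a,b\}$; in either case $\psi(\xoverline{x_k}) = \xoverline{x_k}$, and every other choice strictly increases the length.

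In the second regime, $e \in \{a^{\pm 1}, b^{\pm 1}\}$, and by the symmetry $a\leftrightarrow a^{-1}$, $b\leftrightarrow b^{-1}$ I can assume $e = a$. Then $\psi(b) \in \{b,\, ab,\, ba^{-1},\, aba^{-1}\}$. The first and last give the identity and the inner automorphism by $a$, both fixing $\xoverline{x_k}$. The remaining two I would expand by hand: $\psi(b) = ab$ yields $\psi(x_k) = a^{k+1}(ba)^{2k-1}b \cdot a^{3k+1}(ba)^{4k-1}b$, cyclically reduced of length $16k$, while $\psi(b) = ba^{-1}$ yields, after one round of cyclic cancellation between the leading $a^k$-block and the trailing $(ba^{-1})^{4k}$-block, a cyclically reduced word of length $16k - 4$. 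Both exceed $10k$ for $k \geq 2$.

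The main delicacy is the second regime: unlike the first, bridges can merge with the $a$-blocks of $x_k$, so the clean bridge formula no longer applies and one really has to expand by hand. Fortunately only two explicit subcases remain, and the positivity of the block exponents $k, 2k, 3k, 4k$ makes each simplification transparent.
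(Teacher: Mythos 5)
Your overall route is essentially the paper's: normalize the conjugacy class, split according to whether the special letter of $\psi$ meets the letters of the word, and in the disjoint case observe that the inserted letters cannot cancel into the word. Your bridge formula $10k+\sum|\alpha_v-\beta_u|$ is a cleaner, fully quantitative version of the paper's remark that in those cases ``there always will be some occurrences of letter $a$,'' and your two explicit expansions for $e=a$ are correct ($16k$ and $16k-4$).

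The gap is the symmetry claim in the second regime. The involution $a\leftrightarrow a^{-1}$, $b\leftrightarrow b^{-1}$ does not let you assume $e=a$. Once you have normalized to $\xoverline{x}_k$ itself, no nontrivial permutation automorphism fixes $\xoverline{x}_k$ --- that is exactly Lemma~\ref{lem:permutation} --- so there is no relabelling of $\{a^{\pm1},b^{\pm1}\}$ that moves the special letter while keeping the word $a^kb^{2k}a^{3k}b^{4k}$. In particular the cases $e=b^{\pm1}$, where $\psi(a)\in\{ba,\,ab^{-1},\,bab^{-1}\}$, are not consequences of your two computations; they are genuinely different (the nontrivial subcases give cyclically reduced lengths $14k$ and $14k-4$ rather than $16k$ and $16k-4$), although they do go through. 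Even $e=a^{-1}$ reduces via $a\mapsto a^{-1}$ only to the pair (special letter $a$, word $a^{-k}b^{2k}a^{-3k}b^{4k}$), which undoes your first-paragraph normalization rather than landing back on $x_k$. So four nontrivial subcases must be expanded, not two. The paper sidesteps this by keeping the general representative $u^kv^{2k}u^{3k}v^{4k}$ and listing, up to inner automorphisms, all five possible actions ($u\mapsto u,\ v\mapsto v$; $u\mapsto uv^{\pm1}$; $v\mapsto vu^{\pm1}$); your expansions of $\psi(b)=ab$ and $\psi(b)=ba^{-1}$ account only for the last two.
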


\begin{proof}
The element $\xoverline{x}$ equals to $\phi(x_k)$ for some $\phi\in\Omega_n$. 
Thus $\xoverline{x}$ is represented by the element $u^kv^{2k}u^{3k}v^{4k}$, 
where $u,v \in X \cup X^{-1}$ and $u\notin \{v,v^{-1}\}$. 
Consider a Whitehead automorphism $\psi$. 
Assume that $a$ from the definition of the Whitehead automorphism is not equal to $u,v,u^{-1},v^{-1}$.
If $\psi(u) = aua^{-1}$ and  $\psi(v) = ava^{-1}$, then $\psi(\xoverline{x})=\xoverline{x}$.
In all other cases we have $|\psi(\xoverline{x})|_x > |\xoverline{x}|_x$, 
since in $\psi(\xoverline{x})$ there always will be some occurrences of the letter $a$.
 
If $a \in \{u,v,u^{-1},v^{-1}\}$, 
then up to inner automorphisms, there are only 5 different 
ways a Whitehead automorphism can act on $\{u,v\}$. They are listed below:
$$
\begin{tikzcd}[row sep = 0ex,
/tikz/column 1/.append style={anchor=base west},
/tikz/column 2/.append style={anchor=base west},
/tikz/column 3/.append style={anchor=base west},
/tikz/column 4/.append style={anchor=base west}]
u \to u &\hspace{2pt}u \to uv^{\pm} &\hspace{2pt}u \to u\\ 
v \to v &\hspace{2pt}v \to v        &\hspace{2pt}v \to vu^{\pm}. 
\end{tikzcd}
$$
Direct computation shows that these automorphisms, 
except the one which fixes $u$ and $v$, increase the length of $\xoverline{x}$ (provided that $k>1$).  
Thus $\psi$ does not increase the length of $\xoverline{x}$ if and only if $\psi$ fixes $\xoverline{x}$.
\end{proof}

\begin{lemma}\label{lem:not.inverted}
Elements $x_k$ and $x_k^{-1}$ belong to different $\Aut(\Bo F_n)$-orbits. 
\end{lemma}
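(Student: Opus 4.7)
The plan is to argue by contradiction using the Whitehead algorithm for conjugacy classes (Theorem~\ref{thm:whitehead.alg}) together with Lemma~\ref{lem:whitehead.aut}. Suppose some $\psi\in\Aut(\Bo F_n)$ sends $x_k$ to $x_k^{-1}$; then it sends the conjugacy class $\xoverline{x_k}$ to $\xoverline{x_k^{-1}}$, and both classes are already cyclically reduced of length $10k$ in the basis $X=\{a,b,\ldots\}$.

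First I would show that $10k$ is the minimum length in the whole $\Aut$-orbit of $\xoverline{x_k}$. Since $\xoverline{x_k}\in\Omega_n(\xoverline{x_k})$ trivially, Lemma~\ref{lem:whitehead.aut} guarantees that no Whitehead automorphism strictly decreases $|\xoverline{x_k}|_X$; the contrapositive of the first part of Theorem~\ref{thm:whitehead.alg} then forces $|\xoverline{x_k}|_X=10k$ to equal the minimum length $m$ in the orbit. Consequently $\xoverline{x_k^{-1}}$, being in the same orbit and of length $10k$, also realizes this minimum, so the second part of Theorem~\ref{thm:whitehead.alg} yields a chain of permutation and Whitehead automorphisms $t_1,\ldots,t_l$ with $t_l\cdots t_1(\xoverline{x_k})=\xoverline{x_k^{-1}}$ in which every intermediate class has length exactly $10k$.

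By induction on $i$, I would then prove that every class $y_i := t_i\cdots t_1(\xoverline{x_k})$ lies in $\Omega_n(\xoverline{x_k})$. If $t_i$ is a permutation automorphism, this is immediate from $y_{i-1}\in\Omega_n(\xoverline{x_k})$ and the fact that $\Omega_n$ is a group. If $t_i$ is a Whitehead automorphism, Lemma~\ref{lem:whitehead.aut} applied to $y_{i-1}\in\Omega_n(\xoverline{x_k})$ gives the dichotomy that either $y_i=y_{i-1}$ or $|y_i|_X>|y_{i-1}|_X$; the length-preservation clause from Theorem~\ref{thm:whitehead.alg} excludes the latter, forcing $y_i=y_{i-1}\in\Omega_n(\xoverline{x_k})$. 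Therefore $\xoverline{x_k^{-1}}=y_l\in\Omega_n(\xoverline{x_k})$.

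Finally, I would derive a contradiction by a cyclic exponent pattern comparison. Any class in $\Omega_n(\xoverline{x_k})$ has a cyclically reduced representative of the form $u^kv^{2k}u^{3k}v^{4k}$ for some $u,v\in X\cup X^{-1}$ with $u\notin\{v,v^{-1}\}$, whose cyclic sequence of absolute exponents is a rotation of $(k,2k,3k,4k)$. On the other hand the cyclically reduced word representing $\xoverline{x_k^{-1}}$ is $b^{-4k}a^{-3k}b^{-2k}a^{-k}$, whose cyclic sequence of absolute exponents is a rotation of $(4k,3k,2k,k)$. A direct enumeration of the four rotations of each sequence shows these two cyclic equivalence classes are disjoint, so $\xoverline{x_k^{-1}}\notin\Omega_n(\xoverline{x_k})$, the desired contradiction. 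The main obstacle is the inductive preservation of membership in $\Omega_n(\xoverline{x_k})$ along the Whitehead-algorithm chain, which crucially relies on the length-preservation property in the second part of Theorem~\ref{thm:whitehead.alg}; the closing combinatorial check is routine.
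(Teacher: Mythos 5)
Your proposal is correct and follows essentially the same route as the paper: use Lemma~\ref{lem:whitehead.aut} together with the peak-reduction statement of Theorem~\ref{thm:whitehead.alg} to show that the minimal-length conjugacy classes in the $\Aut(\Bo F_n)$-orbit of $\xoverline{x_k}$ are exactly $\Omega_n(\xoverline{x_k})$, and then check directly that $\xoverline{x_k^{-1}}$ is not among them. Your version is slightly more explicit than the paper's (you verify that $10k$ realizes the minimum and spell out the cyclic-exponent comparison where the paper says ``it is easy to check''), but the underlying argument is identical.
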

\begin{proof}
It follows from Theorem~\ref{thm:whitehead.alg} that 
$\Omega_n(\xoverline{x}_k)$ is the set of all conjugacy classes minimizing the norm 
$|\mathord{\cdot}|_X$ in the $\Aut(\Bo F_n)$-orbit of $\xoverline{x}_k$.
Indeed, if some $\xoverline{y}\in F_n$ minimizes the norm, 
then there exist permutation or Whitehead automorphisms $t_1,\ldots,t_l$ such that 
$$|\xoverline{x}_k|_X = |t_1(\xoverline{x}_k)|_X = \ldots = |t_l\ldots t_1(\xoverline{x}_k)|_X$$ 
and $t_l\ldots t_1(\xoverline{x}_k) = \xoverline{y}$.
Since $|\xoverline{x}_k|_X = |t_1(\xoverline{x}_k)|_X$, we conclude that either $t_1$ is a permutation automorphism, 
or $t_1$ is a Whitehead automorphism and by Lemma \ref{lem:whitehead.aut} we have $t_1(\xoverline{x}_k) = \xoverline{x}_k$. 
Then we apply the same argument to the element $t_1(\xoverline{x}_k)$ 
and the equality 
$$|t_2(t_1(\xoverline{x}_k))|_X = |t_1(\xoverline{x}_k)|_X$$ 
to conclude that $t_2$ is a permutation automorphism or $t_2(t_1(\xoverline{x}_k)) = t_1(\xoverline{x}_k)$. 
It follows that each $t_i$ is a permutation automorphism, or fixes the element $t_{i-1}\ldots t_1(\xoverline{x}_k)$. 
Hence $\xoverline{y}\in\Omega_n(\xoverline{x}_k)$. 

If $\xoverline{x}_k^{-1}$ is in the same $\Aut(\Bo F_n)$-orbit as $\xoverline{x}_k$, 
then $\xoverline{x}_k^{-1}$ would minimize the norm.
Thus to prove the lemma, it remains to prove that $\xoverline{x}_k^{-1}$ does not belong to $\Omega_n(\xoverline{x}_k)$. 
Note that if $t(\xoverline{x}_k)=\xoverline{x}_k^{-1}$ for $t\in\Omega_n$, then necessarily $t(a), t(b) \in \{a,a^{-1},b,b^{-1}\}$. 
It is easy to check that for such automorphisms we always have $t(\xoverline{x}_k) \neq \xoverline{x}_k^{-1}$. 
\end{proof}

\begin{lemma}\label{lem:different.orbits}
Let $i,j \in \Bo Z\backslash\{0\}$ and let $k,l\geq 2$ such that $\OP{gcd}(k) \neq \OP{gcd}(l)$. 
Elements $(x_k)^i$ and~$(x_l)^j$ belong to different $\Aut(\Bo F_n)$-orbits.
\end{lemma}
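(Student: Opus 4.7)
The plan is to adapt the argument of Lemma \ref{lem:not.inverted} so as to apply to arbitrary non-zero powers of $x_k$. The first step is to extend Lemma \ref{lem:whitehead.aut} to the conjugacy classes $\xoverline{(x_k)^i}$. Given any $\xoverline{y}\in\Omega_n(\xoverline{(x_k)^i})$, write $\xoverline{y}=\phi(\xoverline{x_k})^i$ for some $\phi\in\Omega_n$; since $\phi(x_k)$ is cyclically reduced of length $10k$, any non-zero power of it is cyclically reduced of length $10k|i|$. For a Whitehead automorphism $\psi$, Lemma \ref{lem:whitehead.aut} applied to $\phi(\xoverline{x_k})$ gives that either $\psi\phi(\xoverline{x_k})=\phi(\xoverline{x_k})$ or the cyclic length strictly increases. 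Taking $i$-th powers (using that the cyclic reduction of a cyclically reduced word raised to a power consists of $|i|$ copies of the cyclically reduced word) yields the corresponding dichotomy for $\xoverline{y}$.

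Combined with Theorem \ref{thm:whitehead.alg} applied to conjugacy classes, this shows that $\xoverline{(x_k)^i}$ already realises the minimum of $|\mathord{\cdot}|_X$ in its $\Aut(\Bo F_n)$-orbit, and that every other minimum-length representative lies in $\Omega_n(\xoverline{(x_k)^i})$. I then propose to distinguish the orbits using the multiset $M(\xoverline{w})$ of lengths of the maximal single-generator runs in the cyclic word representing $\xoverline{w}$ (identifying each letter with its inverse). This multiset is invariant under the $\Omega_n$-action because permutation automorphisms permute $X\cup X^{-1}$ respecting the involution $x\leftrightarrow x^{-1}$. A direct computation gives
\[
M(\xoverline{(x_k)^i})=\{k,\,2k,\,3k,\,4k\}\ \text{each with multiplicity}\ |i|,
\]
and analogously $M(\xoverline{(x_l)^j})=\{l,\,2l,\,3l,\,4l\}$ each with multiplicity $|j|$.

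Finally, suppose for contradiction that $(x_k)^i$ and $(x_l)^j$ lie in the same $\Aut(\Bo F_n)$-orbit. Both conjugacy classes are then minimal in this common orbit by the first step, so there exists $\phi\in\Omega_n$ with $\phi(\xoverline{(x_k)^i})=\xoverline{(x_l)^j}$, and in particular $M(\xoverline{(x_k)^i})=M(\xoverline{(x_l)^j})$. Matching the smallest elements of the two multisets forces $k=l$, contradicting the hypothesis $\gcd(k)\neq\gcd(l)$ (equivalent to $k\neq l$, since $\gcd(k,2k,3k,4k)=k$). The main obstacle is the bookkeeping in the Whitehead step, particularly when the fixed letter of $\psi$ is a generator $c\notin\{a,b\}^{\pm 1}$: one must verify that the inserted $c^{\pm 1}$'s survive cyclic reduction unless $\psi$ acts by simultaneous conjugation of $a$ and $b$ by $c$, a check that is already implicit in the proof of Lemma \ref{lem:whitehead.aut}.
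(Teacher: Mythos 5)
Your proof is correct, but it takes a genuinely different route from the paper's. The paper dispatches the lemma in a few lines without any Whitehead machinery: since $x_k$ and $x_l$ are not proper powers and roots in a free group are unique up to inversion, an automorphism with $\psi((x_k)^i)=(x_l)^j$ would force $\psi(x_k)=(x_l)^{\pm 1}$; passing to the abelianisation, $x_k$ and $(x_l)^{\pm 1}$ map to the vectors $(4k,6k,0,\ldots,0)$ and $(\pm 4l,\pm 6l,0,\ldots,0)$, and since automorphisms of $\Bo Z^n$ preserve the greatest common divisor of the coordinates of a vector, this gives $2k=2l$, a contradiction. You instead stay inside the peak-reduction framework of Lemmas \ref{lem:permutation}--\ref{lem:not.inverted}: your transfer of the Whitehead dichotomy from $x_k$ to $(x_k)^i$ (via the fact that cyclic reduction commutes with taking powers, so lengths scale by $|i|$ and fixed classes stay fixed) is sound, and the multiset of maximal run lengths is indeed an $\Omega_n$-invariant of the cyclic word whose minimum recovers $k$. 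What the paper's argument buys is brevity and a clean separation of the exponents (handled by unique roots) from the parameters $k,l$ (handled by homology); what yours buys is uniformity with the neighbouring lemmas and independence from the abelianisation, so it would in principle distinguish elements that become equal in $H_1$ --- at the price of the extra bookkeeping you flag at the end, all of which does check out.
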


\begin{proof}
For every element $x\in\Bo F_n$ there is a unique (up to a sign) number $n$ 
and a unique (up to taking an inverse) element $p$ which is not a proper 
power of any other element such that $x = p^n = (p^{-1})^{-n}$.
Assume that there exists an automorphism $\psi\in\Aut(\Bo F_n)$ such that 
$\psi((x_k)^i) = (x_l)^j$. Both $x_k$ and $x_l$ are not proper powers, hence $\psi(x_k) = (x_l)^{\pm 1}$.
In what follows we show that this is impossible. 

Every automorphism $\psi\colon\Bo F_n\to\Bo F_n$ induces the abelianisation automorphism 
$$\Ab(\psi)\colon\Bo Z^n\to\Bo Z^n.$$
Elements $x_k$ and $(x_l)^{\pm}$ are mapped to vectors with coordinates equal to $4k,6k$ and $\pm4l,\pm6l$ in the abelianisation. 
It is enough to show, that these vectors belong to different $\Aut(\Bo Z^n)$-orbits. 
Indeed this is the case, since automorphisms of $\Bo Z^n$ preserve the greatest common divisor of coordinates of a vector, and 
$$\OP{gcd}(4k,6k) = 2\OP{gcd}(k)\neq 2\OP{gcd}(l) = \OP{gcd}(\pm4l,\pm6l).$$
\end{proof}

\begin{lemma}\label{lem:fliping}
Let $n=2$ and $X = \{a,b\}$.
Let $\sigma \in\Aut(\Bo F_2)$ be defined by $\sigma(a) = a^{-1}, \sigma(b) = b$.
Then $x_k$ and $\sigma(x_k)$ belong to different $\Aut^{+}(\Bo F_2)$-orbits. 
\end{lemma}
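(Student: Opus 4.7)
Suppose for contradiction that there exists $\psi\in\Aut^+(\Bo F_2)$ with $\psi(x_k)=\sigma(x_k)$. The plan is to combine the Whitehead-algorithm analysis developed in the preceding lemmas with a careful tracking of the determinant homomorphism $\det\circ Ab\colon\Aut(\Bo F_2)\to\{\pm 1\}$, and then to dispose of an orientation-reversing stabilizer via the Birman embedding.

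I would begin with two preliminary observations. (i) Every Whitehead automorphism of $\Bo F_2$ lies in $\Aut^+(\Bo F_2)$: it fixes one basis element and sends the other to one of four prescribed words, and a direct computation shows its abelianization matrix is upper- or lower-triangular with unit diagonal, hence has determinant $+1$. (ii) By the argument used in the proof of Lemma~\ref{lem:not.inverted}, the common $X$-length $|x_k|_X=|\sigma(x_k)|_X=10k$ is minimal in the $\Aut(\Bo F_2)$-orbit of $x_k$, and every minimum-length element of that orbit lies in $\Omega_2(x_k)$.

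Applying Theorem~\ref{thm:whitehead.alg}, I would extract a length-preserving sequence $t_1,\dots,t_l$ of permutation and Whitehead automorphisms with $t_l\circ\cdots\circ t_1(x_k)=\sigma(x_k)$. Passing to the induced action on $\Omega_2(\xoverline{x_k})$ and using Lemma~\ref{lem:whitehead.aut}, the Whitehead steps fix the current conjugacy class, so the product of the permutation factors inside $\Omega_2$ implements the unique $\Omega_2$-element sending $\xoverline{x_k}$ to $\xoverline{\sigma(x_k)}$, which by Lemma~\ref{lem:permutation} must be $\sigma$ itself. Taking determinants and using observation (i), the product $t_l\circ\cdots\circ t_1$ has $\det Ab=\det Ab(\sigma)=-1$. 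Consequently $\tau:=\psi^{-1}\circ t_l\circ\cdots\circ t_1$ satisfies $\tau(x_k)=x_k$ and $\det Ab(\tau)=1\cdot(-1)=-1$, so $\tau\in Stab(x_k)\cap\Aut^-(\Bo F_2)$.

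The hard part, and the main obstacle, is then to show $Stab(x_k)\cap\Aut^-(\Bo F_2)=\emptyset$, which produces the desired contradiction. Under the isomorphism $\pi'\colon\MCG^{\pm}(S_1\setminus\{\circ\},\star)\to\Aut(\Bo F_2)$ described in the introduction of Section 4, an element of $Stab(x_k)$ corresponds to a mapping class commuting with $Push(x_k)$, and it is orientation-reversing precisely when it lies in $\MCG^{\pm}\setminus\MCG$. By Theorem~\ref{thm:gen.kra}, $Push(x_k)$ is pseudo-Anosov; combined with the fact (which we have already exploited in the proof of Lemma~\ref{lem:different.orbits}) that $x_k$ is not a proper power in $\Bo F_2$, the standard structure result for centralizers of pseudo-Anosov mapping classes, together with the pronounced exponent asymmetry of $a^kb^{2k}a^{3k}b^{4k}$, identifies the centralizer of $Push(x_k)$ in $\MCG^{\pm}$ with the orientation-preserving infinite cyclic group generated by $Push(x_k)$. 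Consequently $Stab(x_k)\subset\Aut^+(\Bo F_2)$, so no such $\tau$ exists and the lemma follows.
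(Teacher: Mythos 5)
Your reduction is sound and matches the paper's: assuming $\psi(x_k)=\sigma(x_k)$ with $\psi\in\Aut^+(\Bo F_2)$, one obtains a determinant $-1$ automorphism stabilizing $x_k$, so everything hinges on proving $Stab(x_k)\subset\Aut^+(\Bo F_2)$. (Incidentally, your Whitehead-sequence construction of $\tau$ is an unnecessary detour: $\psi^{-1}\sigma$ itself already lies in $Stab(x_k)\setminus\Aut^+(\Bo F_2)$, which is exactly how the paper sets up the contradiction.)

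The gap is in the step you yourself flag as ``the hard part.'' First, Theorem~\ref{thm:gen.kra} does not assert that $Push(x_k)$ is pseudo-Anosov on all of $S_1\setminus\{\circ\}$ rel $\star$; it says the Nielsen--Thurston decomposition has a single pseudo-Anosov component, supported on the subsurface $S'_{\gamma}$, which may be proper. If it is proper, $Push(x_k)$ is reducible and the centralizer theorem for pseudo-Anosov classes does not apply. Second, even granting that $Push(x_k)$ is pseudo-Anosov on the whole surface, the standard structure result only gives that its centralizer in $\MCG$ is virtually cyclic and contains $\langle Push(x_k)\rangle$ with finite index; orientation-reversing mapping classes commuting with a pseudo-Anosov do exist in general (pseudo-Anosov maps equivariant under orientation-reversing involutions are easy to construct), so excluding them requires an argument specific to $x_k$. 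Your phrase ``the pronounced exponent asymmetry of $a^kb^{2k}a^{3k}b^{4k}$'' is precisely where that argument should go, but none is given --- and since ``$Stab(x_k)$ contains no determinant $-1$ element'' is essentially equivalent to the lemma after your reduction, this cannot be waved at. The paper closes exactly this gap combinatorially: it shows $Stab(\xoverline{x}_k)\subset\Aut^+(\Bo F_2)$ using McCool's generating set for the stabilizer of a conjugacy class (labels of loops in the graph of minimal-length representatives), together with Lemma~\ref{lem:permutation} (the $\Omega_2$-action on the minimal orbit is free) and Lemma~\ref{lem:whitehead.aut} (length-preserving Whitehead moves fix the class); every generator is then a product of Whitehead automorphisms, which have determinant $+1$, and a permutation automorphism forced to be trivial. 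You would need either to reproduce that argument or to supply a genuine proof of your centralizer claim.
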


\begin{proof}
Let us recall that the group $\Aut^+(\Bo F_2)$ consists of automorphisms $\psi$ for which $\OP{det}(\Ab(\psi))=1$.
We have that 
$$\sigma(x_k) = a^{-k}b^{2k}a^{-3k}b^{4k}$$ 
and $\sigma\in\Aut(\Bo F_2)\setminus\Aut^+(\Bo F_2)$.
Suppose that $\sigma(x_k)=\psi(x_k)$ for some automorphism $\psi\in\Aut^+(\Bo F_2)$. 
It means that $\psi^{-1}\sigma\in \OP{Stab}(x_k)$ and $\psi^{-1}\sigma\in\Aut(\Bo F_2)\setminus\Aut^+(\Bo F_2)$.
In what follows we show that $\OP{Stab}(x_k)<\Aut^+(\Bo F_2)$ which is a contradiction.

Let us consider the stabilizer $\OP{Stab}(\xoverline{x}_k)$ of the conjugacy class $\xoverline{x}_k$.
Of course $\OP{Stab}(x_k)<\OP{Stab}(\xoverline{x}_k)$. We show that $\OP{Stab}(\xoverline{x}_k)<\Aut^+(\Bo F_2)$. 
We use the construction presented in \cite{mccool} in order to find a generating set of $\OP{Stab}(\xoverline{x}_k)$. 
First we define a graph $\Delta$ as follows: a vertex of $\Delta$ is a conjugacy class of minimal length in the 
$\Aut(\Bo F_2)$-orbit of $\xoverline{x}_k$. It follows from the proof of 
Lemma~\ref{lem:not.inverted} that this set equals to $\Omega_2(\xoverline{x}_k)$.
Two vertices $v_1,v_2\in\Omega_2(\xoverline{x}_k)$ are connected by a directed edge from $v_1$ to $v_2$ 
if there is a permutation or Whitehead automorphism $\psi$ such that $\psi(v_1) = v_2$.
We will consider edge embedded loops in $\Delta$ based at $\xoverline{x}_k$.
The theorem of McCool \cite{mccool} says, that $\OP{Stab}(\xoverline{x}_k)$ is generated by 
elements which are products of labels read from all possible loops like this. 

It follows from Lemma~\ref{lem:permutation} that the subgraph 
spanned by edges labeled with permutation automorphisms is 
a complete graph on the set $\Omega_2(\xoverline{x}_k)$, with no loops.
Lemma~\ref{lem:whitehead.aut} implies that all edges labeled by Whitehead automorphisms are loops.  

Let $\delta$ be a loop in $\Delta$ based at $\xoverline{x}_k$. 
Now we show that the word $w$ which is read from the labels of $\delta$ is trivial in  
$\Aut(\Bo F_2)/\Aut^+(\Bo F_2)\cong\Bo Z/2\Bo Z$. 
Note that all Whitehead automorphisms belong to $\Aut^+(\Bo F_2)$. 
Thus we can ignore labels coming from the edges labeled by Whitehead automorphisms.
Since the edges labeled by Whitehead automorphisms are loops, 
we can assume that $\delta$ goes through the edges labeled only by permutation automorphisms. 
Thus we can assume, that the element read from the labels of $\delta$ 
is just a permutation automorphism which fixes $\xoverline{x}_k$. 
By Lemma~\ref{lem:permutation} this element is trivial.
\end{proof}

\begin{remark}
More detailed analysis shows that $\OP{Stab}(x_k)$ is a cyclic subgroup generated by conjugation by $x_k$. 
One also can prove the analog of Lemma~\ref{lem:fliping} for $\Bo F_n$ by replacing $x_k$ with more complicated elements.
\end{remark}

\begin{lemma}\label{lem:different.orbits2}
Let $i,j \in \Bo Z\backslash\{0\}$ and let $k,l\geq 2$ such that $\OP{gcd}(k \neq \OP{gcd}(l)$. Then:
\begin{enumerate}[leftmargin=0.5cm,itemindent=.5cm,labelwidth=\itemindent,labelsep=0cm,align=left,label=\alph*)]
\item $x_k^i$ and $\sigma(x_k)^j$ belong to different $\Aut^+(\Bo F_n)$-orbits. 
\item $\sigma(x_k)^i$ and $\sigma(x_l)^j$ belong to different $\Aut(\Bo F_n)$-orbits.
\item $x_k^i$ and $\sigma(x_l)^j$ belong to different $\Aut(\Bo F_n)$-orbits.
\end{enumerate} 
\end{lemma}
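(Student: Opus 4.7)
The plan is to reduce all three parts to previously established lemmas via a common preliminary step. Suppose for contradiction that an automorphism $\psi$ realizes the claimed equivalence $\psi(u^i) = v^j$, where the pair $(u,v)$ is $(x_k,\sigma(x_k))$ for (a) with $\psi\in\Aut^+(\Bo F_2)$, $(\sigma(x_k),\sigma(x_l))$ for (b), and $(x_k,\sigma(x_l))$ for (c), the latter two with $\psi\in\Aut(\Bo F_n)$. Since $x_k = a^k b^{2k} a^{3k} b^{4k}$ is manifestly not a proper power (its cyclic reduced form has no non-trivial periodicity), the same holds for its image under any automorphism, hence for $\sigma(x_k), x_l, \sigma(x_l)$. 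Root uniqueness in the free group (two elements satisfying a common power relation commute, hence lie in a common cyclic subgroup, hence if both are non-proper powers they agree up to inverse) then forces $\psi(u) = v^{\pm 1}$ with $i = \pm j$.

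For part (a), the subcase $\psi(x_k) = \sigma(x_k)$ is precisely what is ruled out by Lemma~\ref{lem:fliping}. For the remaining subcase $\psi(x_k) = \sigma(x_k)^{-1}$, I apply $\sigma$ to both sides; since $\sigma$ is an involution, this gives $(\sigma\psi)(x_k) = \sigma(\sigma(x_k)^{-1}) = x_k^{-1}$, so $\sigma\psi\in\Aut(\Bo F_2)$ inverts $x_k$, contradicting Lemma~\ref{lem:not.inverted}. For part (b), I conjugate $\psi$ by $\sigma$ and use $\sigma^{-1}=\sigma$: the automorphism $\phi := \sigma\psi\sigma$ satisfies $\phi(x_k) = \sigma(\psi(\sigma(x_k))) = \sigma(\sigma(x_l)^{\pm 1}) = x_l^{\pm 1}$, contradicting Lemma~\ref{lem:different.orbits} under the hypothesis $\gcd(k)\neq\gcd(l)$. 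For part (c), abelianization suffices: $Ab(x_k) = (4k,6k)$ has gcd $2k$, while $Ab(\sigma(x_l)^{\pm 1}) = \pm(-4l,6l)$ has gcd $2l$, and since every automorphism of $\Bo Z^n$ preserves the gcd of the coordinates of a vector, an equality $Ab(\psi(x_k)) = Ab(\sigma(x_l)^{\pm 1})$ forces $2k = 2l$, against the hypothesis.

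The only step that goes beyond formal manipulation is the inverse subcase of part (a), which is not directly covered by Lemma~\ref{lem:fliping}; this is exactly where the stronger conclusion of Lemma~\ref{lem:not.inverted} (that $x_k$ admits no inverting automorphism in the full $\Aut(\Bo F_2)$, not just in $\Aut^+$) is needed, and the involutive property of $\sigma$ provides the bridge between $\Aut^+(\Bo F_2)$ and its non-trivial coset.
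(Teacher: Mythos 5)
Your proposal is correct and follows essentially the same route as the paper: reduce to $i=1$, $j=\pm1$ using that $x_k$ is not a proper power (as in the proof of Lemma~\ref{lem:different.orbits}), handle a) via Lemma~\ref{lem:fliping} for $j=1$ and via composition with $\sigma$ plus Lemma~\ref{lem:not.inverted} for $j=-1$, and settle b) and c) by the abelianization/gcd argument. The only cosmetic difference is that for b) you conjugate by $\sigma$ to invoke the statement of Lemma~\ref{lem:different.orbits} directly, where the paper merely says the proof is analogous; the content is identical.
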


\begin{proof}
It follows from the proof of Lemma~\ref{lem:different.orbits} that we can assume $i=1$ and $j=\pm1$. 
Lemma~\ref{lem:fliping} implies a) for $j=1$. 
To prove a) for $j=-1$ it is enough to note that $\sigma(x_k)^{-1}$ and $x_k^{-1}$ are in the same $\Aut(\Bo F_n)$-orbit.
If $x_k$ and $\sigma(x_k)^{-1}$ are in the same $\Aut^+(\Bo F_n)$-orbit, then 
$x_k$ and $x_k^{-1}$ are in the same $\Aut(\Bo F_n)$-orbit, which contradicts Lemma~\ref{lem:not.inverted}. 
The proof of b) and c) is analogous to the proof of Lemma~\ref{lem:different.orbits}.
\end{proof}

\bibliography{bibliography}

\def\cprime{$'$} \def\polhk#1{\setbox0=\hbox{#1}{\ooalign{\hidewidth
  \lower1.5ex\hbox{`}\hidewidth\crcr\unhbox0}}}
  \def\polhk#1{\setbox0=\hbox{#1}{\ooalign{\hidewidth
  \lower1.5ex\hbox{`}\hidewidth\crcr\unhbox0}}}
\begin{thebibliography}{10}

\bibitem{abert}
Mikl\'{o}s Ab\'{e}rt.
\newblock Some questions.
\newblock {\em http://www.renyi.hu/$\sim$abert/questions.pdf}.

\bibitem{prim.width}
Valery Bardakov, Vladimir Shpilrain, and Vladimir Tolstykh.
\newblock On the palindromic and primitive widths of a free group.
\newblock {\em J. Algebra}, 285(2):574--585, 2005.

\bibitem{barge.ghys}
Jean Barge and {\'E}tienne Ghys.
\newblock Surfaces et cohomologie born\'ee.
\newblock {\em Invent. Math.}, 92(3):509--526, 1988.

\bibitem{BBF}
Mladen Bestvina, Ken Bromberg, and Koji Fujiwara.
\newblock Stable commutator length on mapping class groups.
\newblock {\em Ann. Inst. Fourier (Grenoble)}, 66(3):871--898, 2016.

\bibitem{bgkm}
Michael Brandenbursky, {\'S}wiatos{\l}aw~R. Gal, Jarek K{\polhk{e}}dra, and
  Micha{\l} Marcinkowski.
\newblock The cancellation norm and the geometry of bi-invariant word metrics.
\newblock {\em Glasg. Math. J.}, 58(1):153--176, 2016.

\bibitem{BK-disc}
Michael Brandenbursky and Jarek K{\polhk{e}}dra.
\newblock On the autonomous metric on the group of area-preserving
  diffeomorphisms of the 2-disc.
\newblock {\em Algebr. Geom. Topol.}, 13(2):795--816, 2013.

\bibitem{brooks}
Robert Brooks.
\newblock Some remarks on bounded cohomology.
\newblock In {\em Riemann surfaces and related topics: {P}roceedings of the
  1978 {S}tony {B}rook {C}onference}, volume~97 of {\em Ann. of Math. Stud.},
  pages 53--63. Princeton Univ. Press, Princeton, N.J., 1981.

\bibitem{buser.parlier}
Peter Buser and Hugo Parlier.
\newblock The distribution of simple closed geodesics on a {R}iemann surface.
\newblock In {\em Complex analysis and its applications}, volume~2 of {\em
  OCAMI Stud.}, pages 3--10. Osaka Munic. Univ. Press, Osaka, 2007.

\bibitem{C-08}
Danny Calegari.
\newblock Word length in surface groups with characteristic generating sets.
\newblock {\em Proc. Amer. Math. Soc.}, 136(7):2631--2637, 2008.

\bibitem{scl}
Danny Calegari.
\newblock {\em scl}, volume~20 of {\em MSJ Memoirs}.
\newblock Mathematical Society of Japan, Tokyo, 2009.

\bibitem{Erl-1}
Viveca {Erlandson}.
\newblock A remark on the word length in surface groups.
\newblock {\em Transactions of AMS, to appear}.

\bibitem{Erl-2}
Viveca Erlandsson, Hugo Parlier, and Juan Souto.
\newblock Counting curves, and the stable length of currents.
\newblock {\em Journal of AMS, to appear}.

\bibitem{fm}
B.~Farb and D.~Margalit.
\newblock {\em {A primer on mapping class groups}}, volume~49 of {\em Princeton
  Mathematical Series}.
\newblock Princeton University Press, 2012.

\bibitem{grigo}
R.~I. Grigorchuk.
\newblock Some results on bounded cohomology.
\newblock In {\em Combinatorial and geometric group theory ({E}dinburgh,
  1993)}, volume 204 of {\em London Math. Soc. Lecture Note Ser.}, pages
  111--163. Cambridge Univ. Press, Cambridge, 1995.

\bibitem{Hase-2}
Antonius {Hase}.
\newblock Dynamics of {$Out(F_n)$} on the second bounded cohomology of {$F_n$}.
\newblock {\em Arxiv:1805.00366}.

\bibitem{Hase-1}
Antonius {Hase}.
\newblock The study of {$Out(F_n)$} on the second bounded cohomology
  {$H^2_b(F_n,R)$}.
\newblock {\em Master thesis, Technion}, 2016.

\bibitem{hass.scott}
Joel Hass and Peter Scott.
\newblock Intersections of curves on surfaces.
\newblock {\em Israel J. Math.}, 51(1-2):90--120, 1985.

\bibitem{Huber}
Thomas {Huber}.
\newblock Rotation quasimorphisms for surfaces.
\newblock {\em Ph.D thesis, ETH}, 2012.

\bibitem{kra}
Irwin Kra.
\newblock On the {N}ielsen-{T}hurston-{B}ers type of some self-maps of
  {R}iemann surfaces.
\newblock {\em Acta Math.}, 146(3-4):231--270, 1981.

\bibitem{mccool}
James McCool.
\newblock Some finitely presented subgroups of the automorphism group of a free
  group.
\newblock {\em J. Algebra}, 35:205--213, 1975.

\bibitem{gen.algo}
Alexei~D. Miasnikov and Alexei~G. Myasnikov.
\newblock Whitehead method and genetic algorithms.
\newblock In {\em Computational and experimental group theory}, volume 349 of
  {\em Contemp. Math.}, pages 89--114. Amer. Math. Soc., Providence, RI, 2004.

\bibitem{prim.in.f2}
R.~P. Osborne and H.~Zieschang.
\newblock Primitives in the free group on two generators.
\newblock {\em Invent. Math.}, 63(1):17--24, 1981.

\bibitem{parlier}
Hugo Parlier.
\newblock Hyperbolic polygons and simple closed geodesics.
\newblock {\em Enseign. Math. (2)}, 52(3-4):295--317, 2006.

\bibitem{Rhem}
A.~H. Rhemtulla.
\newblock A problem of bounded expressibility in free products.
\newblock {\em Proc. Cambridge Philos. Soc.}, 64:573--584, 1968.

\bibitem{stallings}
John~R. Stallings.
\newblock Whitehead graphs on handlebodies.
\newblock In {\em Geometric group theory down under ({C}anberra, 1996)}, pages
  317--330. de Gruyter, Berlin, 1999.

\bibitem{wh1}
J.~H.~C. Whitehead.
\newblock On {C}ertain {S}ets of {E}lements in a {F}ree {G}roup.
\newblock {\em Proc. London Math. Soc.}, S2-41(1):48.

\bibitem{wh2}
J.~H.~C. Whitehead.
\newblock On equivalent sets of elements in a free group.
\newblock {\em Ann. of Math. (2)}, 37(4):782--800, 1936.

\end{thebibliography}
\bibliographystyle{plain}

\end{document}